\newtheorem{theorem}{Theorem}
\newtheorem{corollary}{Corollary}
\newtheorem{lemma}{Lemma}
\newtheorem{proposition}{Proposition}
\newtheorem{condition}{Condition}
\newtheorem{remark}{Remark}
\newcommand{\R}{\mathbb{R}}
\newenvironment{enumerate*}%
\begin{document}

\begin{frontmatter}
\title{Consistent inference for diffusions from \\ low frequency measurements}

\runtitle{Inference for diffusions}

\begin{aug}
\author[A]{\fnms{Richard}~\snm{Nickl }}

\address[A]{Department of Pure Mathematics and Mathematical Statistics, University of Cambridge\footnote{I would like to thank James Norris and Gabriel Paternain for helpful discussions, three anonymous referees and the associate editor for their critical remarks, and Matteo Giordano for generating Fig.s 1-2.};~ nickl@maths.cam.ac.uk}


\end{aug}

\begin{abstract}
Let $(X_t)$ be a reflected diffusion process in a bounded convex domain in $\R^d$, solving the stochastic differential equation $$dX_t = \nabla f(X_t) dt + \sqrt{2f (X_t)} dW_t, ~t \ge 0,$$ with $W_t$ a $d$-dimensional Brownian motion. The data $X_0, X_D, \dots, X_{ND}$ consist of discrete measurements and the time interval $D$ between consecutive observations is fixed so that one cannot `zoom' into the observed path of the process. The goal is to infer the diffusivity $f$ and the associated transition operator $P_{t,f}$. We prove injectivity theorems and stability inequalities for the maps $f \mapsto P_{t,f} \mapsto P_{D,f}, t<D$. Using these estimates we establish the statistical consistency of a class of Bayesian algorithms based on Gaussian process priors for the infinite-dimensional parameter $f$, and show optimality of some of the convergence rates obtained. We discuss an underlying relationship between the degree of ill-posedness of this inverse problem and the `hot spots' conjecture from spectral geometry.  
\end{abstract}



\end{frontmatter}

\setcounter{tocdepth}{2}
\tableofcontents

\section{Introduction}

Diffusion describes a random process for the evolution over time of phenomena such as heat flow, electric conductance, chemical reactions, or molecular dynamics, to name just a few examples. The density of a diffusing substance in an insulated medium, say a bounded convex subset $\mathcal O$ of $\R^d$, $d \ge 1$, is described by the solutions $u$ to the parabolic partial differential equation (PDE) known as the heat equation, $\partial u/ \partial t= \mathcal L_{f,U}u$, with a divergence form elliptic second order differential operator $$\mathcal L_{f,U} = \frac{1}{m}\nabla \cdot (m f \nabla), ~~~m\equiv m_U \propto e^{-U},$$ and equipped with Neumann boundary conditions. Here $f :\mathcal O \to [f_{min},\infty), f_{min}>0,$ is a positive scalar `diffusivity' function and $U: \mathcal O \to \R$ is a `force' potential inducing a Gibbs measure $\mu = \mu_U$ with (Lebesgue-) probability density $m_U$. If $W_t$ is a $d$-dimensional Brownian motion then the corresponding `microscopic' statistical model for a diffusing particle is provided by solutions $(X_t)$ to the stochastic differential equation (SDE)
\begin{equation} \label{diffus0}
dX_t = \nabla f(X_t) dt + f(X_t) \nabla U(X_t)dt + \sqrt{2f (X_t)} dW_t + \nu(X_t)dL_t, ~~t \ge 0,
\end{equation}
started at  $X_0 =x \in \mathcal O$. The process is \textit{reflected} when hitting the boundary $\partial \mathcal O$ of its state space: $L_t$ is a `local time' process acting only when $X_t \in \partial \mathcal O$ and $\nu(x)$ is the (inward) pointing normal vector at $x \in \partial\mathcal O$. When $f, \nabla f, \nabla U$ are Lipschitz maps on $\mathcal O$, a continuous time Markov process $(X_t: t \ge 0)$ giving a unique pathwise solution to (\ref{diffus0}) exists \cite{T79}. 
\begin{figure}\label{figu}
\includegraphics[width=0.46 \textwidth]{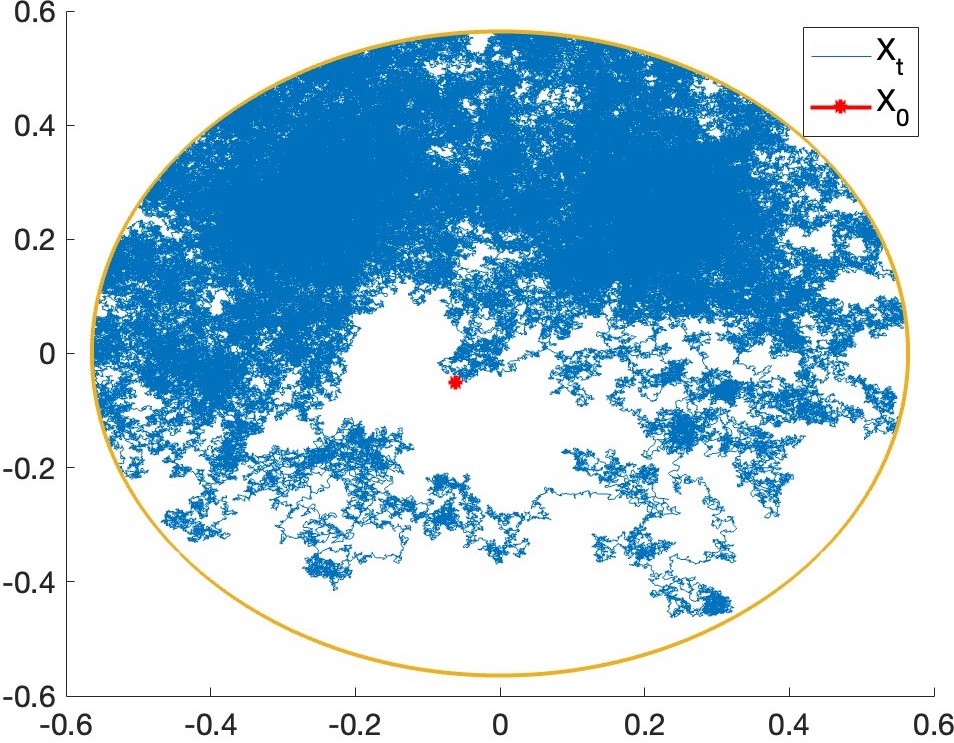}~~~ \includegraphics[width=0.46 \textwidth]{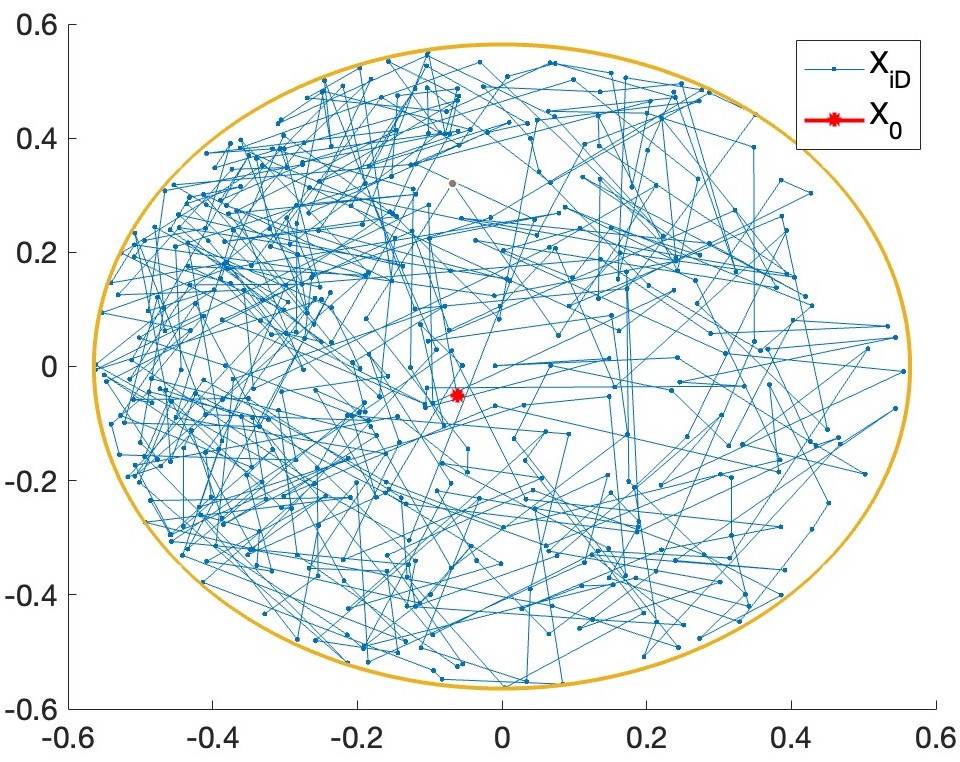}
\caption{Left: a reflected diffusion path $(X_t: 0 \le t \le T)$ initialised at $X_0$ and ran until time $T=5$. Right: $N=500$ discrete observations $(X_{iD})_{i=0}^N$ at sampling frequency $D=0.05$ ($T=25$). The diffusivity $f$ is given in Fig.~\ref{fig:estimators}.}
\end{figure}

\vspace{2pt plus 1pt minus 1pt}

Real world observations of diffusion are necessarily discrete and often subject to a lower bound on the time that elapses between consecutive measurements. We denote this `observation distance' by $D>0$ and assume for simplicity that it is the same at each measurement. The data is $X_0, X_D, \dots, X_{N D}$ for some $N \in \mathbb N$, that is, we are tracking the trajectory of a given particle along discrete points in time, see Fig.~\ref{figu}. In practice one may be observing several independent particles which essentially corresponds to (linearly) augmenting sample size $N$ -- we consider the one-particle model without loss of generality. We investigate the possibility to infer $f, U$ and the transition operator $P_{t,f, U}$ of the Markov process $(X_t)$ both at $t=D$ and at `unobserved' times $t>0$ by a statistical algorithm, that is, by a computable function of $(X_{iD}:i=1, \dots, N)$. We are interested in the scenario where $D>0$ is fixed (but known) as sample size $N \to \infty$. This is often the most appropriate observational model: for instance the speed at which particles or molecules transverse the medium $\mathcal O$ may be much faster than the frequency at which images can be taken. Following \cite{GHR04} we refer to this as the `low measurement frequency' scenario. See \cite{HDTD22, Hetal23} or also Ch.~4 in \cite{MP15} for such situations in the biological sciences, or \cite{MH12, RC15, LSZ15} in the context of data assimilation problems.

\vspace{2pt plus 1pt minus 1pt}

The invariant `equilibrium' distribution of the Markov process (\ref{diffus0}) is well known (\cite{BGL14}, Sec.1.11.3) to equal $\mu_U$ and hence identifies the potential $U$ via its probability density $m_U$. The infinite-dimensional parameter $\mu_U$ (and thus $U$) can then be estimated from a discrete sample by standard linear density estimators $\hat \mu$ that smooth the empirical measure of the $X_{iD}$'s near any point $x \in \mathcal O$ (cf.~\cite{GHR04} or also, with continuous data, \cite{DR07, S18, GR22}). Using exponentially fast mixing of ergodic averages of the Markov process towards their $\mu_U$-expectations (e.g., via \cite{P15} combined with Thm 4.9.3 and Sec.1.11.3 in \cite{BGL14}, or also with \cite{CW95}) one can then obtain excellent statistical guarantees for $\|\hat \mu - \mu_U\|$ in relevant norms $\|\cdot\|$ (e.g., as after (30) in \cite{NS17}). But the invariant measure $\mu_U$ contains \textit{no information} about the diffusivity $f$ in eq.~(\ref{diffus0}), and in a `low frequency' measurement scheme, standard statistics of the data such as the quadratic variation (`mean square displacements') of the process provide no valid inference on $f$ either (not even along the observed path). We conclude not only that recovering $f$ is a much harder problem than estimation of $U$, but also that the problems essentially decouple and can be treated separately. Therefore, to simplify the exposition of our main contributions we henceforth assume that $U=1$ in (\ref{diffus0}) and consider the model
\begin{equation} \label{diffus}
dX_t = \nabla f(X_t) dt  + \sqrt{2f (X_t)} dW_t + \nu(X_t)dL_t, ~t \ge 0,
\end{equation}
started uniformly at random $X_0 \sim Unif(\mathcal O)$. We denote by $\mathbb P_f$ the resulting probability law of $(X_t: t \ge 0)$ (in path space). Our statistical results could be generalised to the case of unknown $U$ in (\ref{diffus0}) as we discuss in Remark \ref{ohbaby} below.

\vspace{2pt plus 1pt minus 1pt}

The problem to determine diffusivity parameters $f$ from data has a long history in mathematical inverse problems -- we mention here \cite{Calderon1980, KV84, Sylvester1987, Nachman1988, Uhlmann2009, AN19} in the context of the \textit{Calder\'on problem} as well as \cite{R81, EHN96, S10, KNS08, BCDPW17, GN20, N23} in the context of \textit{Darcy's flow problem}, and the many references therein. All these settings consider a simplified observational model where one is given a `steady state' measurement of diffusion, returning the (typically `noisy') solution of a \textit{time-independent elliptic} PDE. The potential inferential barrier arising with low frequency measurements disappears in the reduction from a time evolution equation to the elliptic PDE and hence does not inform the statistical setting investigated here.

\vspace{2pt plus 1pt minus 1pt}

As the invariant measure $\mu$ is identical for all $f$, the information contained in low frequency discrete data from (\ref{diffus}) is encoded in the transition operator $P_{D, f}$ of the underlying Markov process $(X_t)$. Little is known about how to conduct statistically valid inference in this setting, with notable exceptions being the one-dimensional case $d=1$ studied in \cite{GHR04, NS17}. We also mention the consistency results \cite{vdMvZ13, GS14} as well as \cite{LP21} for Markovian transition operators, but these do not concern the conductivities $f$ themselves.  A first question is whether the task of identifying $f$ from $P_{D,f}$ for \textit{fixed} observation distance $D>0$ is even well-posed, that is, whether the (non-linear) map $f \mapsto P_{D,f}$ is \textit{injective}. The answer to this question is positive at least if $f$ is prescribed near $\partial \mathcal O$. Denote by $L^2(\mathcal O)$ the Hilbert space of square Lebesgue integrable functions on $\mathcal O$.

\begin{theorem}\label{t0}
Suppose positive diffusion coefficients $f_1, f_2 \in C^2(\mathcal O)$ are bounded away from zero on $\mathcal O$ and such that $f_1=f_2$ near $\partial \mathcal O$. Then if $P_{D,f_1}= P_{D,f_2}$ coincide as bounded linear operators on $L^2(\mathcal O)$ for some $D>0$, we must have $f_1=f_2$ on $\mathcal O$.
\end{theorem}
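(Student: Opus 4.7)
The plan is to decode the semigroup identity $P_{D,f_1}=P_{D,f_2}$ into an equality of infinitesimal generators, translate that equality into a first-order PDE for $g := f_1 - f_2$, and finally test this PDE with compactly supported probes to force $g\equiv 0$. Under $U\equiv 1$ the invariant measure of $(X_t)$ is Lebesgue on $\mathcal O$ and its infinitesimal generator is the self-adjoint divergence-form operator $\mathcal L_f u = \nabla\cdot(f\nabla u)$ on $L^2(\mathcal O, dx)$, with Neumann boundary conditions arising from the reflection term $\nu\,dL_t$. Because each $f_i\in C^2(\mathcal O)$ is bounded below, $-\mathcal L_{f_i}$ is uniformly elliptic with compact resolvent and a discrete spectrum $0=\lambda_0^{f_i}<\lambda_1^{f_i}\le\lambda_2^{f_i}\le\cdots$ supported by an $L^2$-orthonormal basis of eigenfunctions, and hence $P_{D,f_i}=e^{D\mathcal L_{f_i}}$ is a strictly positive, injective, compact, self-adjoint operator on $L^2(\mathcal O)$ with eigenvalues $e^{-D\lambda_j^{f_i}}\in(0,1]$.

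The first step is to show that $P_{D,f_1}=P_{D,f_2}$ forces $\mathcal L_{f_1}=\mathcal L_{f_2}$. By uniqueness of the spectral resolution of a compact self-adjoint operator, equality of $P_{D,f_1}$ and $P_{D,f_2}$ forces their spectra (with multiplicities) and their spectral projectors to coincide. The Borel functional calculus applied to $z\mapsto D^{-1}\log z$ (legitimate since the spectrum lies in $(0,1]$) then recovers the generator through $\mathcal L_f = D^{-1}\log P_{D,f}$, so that $\mathcal L_{f_1}=\mathcal L_{f_2}$ as unbounded self-adjoint operators with the same domain. In particular, both second-order differential operators agree on the common core $C_c^\infty(\mathrm{int}\,\mathcal O)\subset \mathrm{Dom}(\mathcal L_f)$, where they act as classical elliptic operators.

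Writing $\mathcal L_{f_i} u = f_i\Delta u + \nabla f_i\cdot \nabla u$ and subtracting then yields the pointwise identity $g(x)\Delta u(x) + \nabla g(x)\cdot\nabla u(x)=0$ for every $u\in C_c^\infty(\mathrm{int}\,\mathcal O)$ and every $x\in\mathrm{int}\,\mathcal O$. Given a compact $K\subset\mathrm{int}\,\mathcal O$, pick a cutoff $\phi\in C_c^\infty(\mathrm{int}\,\mathcal O)$ with $\phi\equiv 1$ on $K$ and plug in the test function $u(x)=x_i\phi(x)$: on $K$ one has $\Delta u = 0$ and $\nabla u = e_i$, so the identity collapses to $\partial_i g\equiv 0$ on $K$. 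Exhausting the interior by compacts and varying $i=1,\dots,d$, $\nabla g\equiv 0$ throughout the (convex, hence connected) interior, so $g$ is constant on $\mathcal O$; the boundary hypothesis $g\equiv 0$ near $\partial\mathcal O$ forces that constant to be $0$, and hence $f_1=f_2$.

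The main obstacle is the operator-theoretic passage from $P_{D,f_1}=P_{D,f_2}$ to $\mathcal L_{f_1}=\mathcal L_{f_2}$: one has to rule out any ambiguity coming from multiplicities of the eigenvalues of $\mathcal L_{f_i}$, and to check that the functional-calculus logarithm reconstructs the generator on its full domain rather than only on a proper core. This is handled by observing that the spectral projectors onto the eigenspaces of $P_{D,f}$ are intrinsic to $P_{D,f}$ (independent of any basis choice inside those eigenspaces), and that $\mathcal L_f$ coincides with the self-adjoint operator whose spectral resolution uses those very projectors with eigenvalues $-\lambda_j^f = D^{-1}\log e^{-D\lambda_j^f}$; once that is in place, the PDE argument above is routine.
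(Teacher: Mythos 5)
Your argument is correct, and it is a genuinely different and more elementary route than the one in the paper. The paper establishes Theorem~\ref{t0} as a byproduct of the quantitative stability estimate in Theorem~\ref{logstabthm}: one there controls $\|f-f_0\|_{L^2}$ by a logarithmic modulus of continuity in $\|P_{D,f}-P_{D,f_0}\|_{L^2\to L^2}$, using a truncated functional-calculus comparison of the operators $\mathcal L_{f}^{-1}\phi$, $\mathcal L_{f_0}^{-1}\phi$, combined with the transport-operator stability from Lemma~\ref{zauberstab}; injectivity then follows by setting the right-hand side to zero. Your proof instead runs the ``soft'' direction: from $P_{D,f_1}=P_{D,f_2}$ you first recover $\mathcal L_{f_1}=\mathcal L_{f_2}$ as self-adjoint operators via the Borel functional calculus (spectral uniqueness for a positive compact self-adjoint operator plus the injectivity of $z\mapsto e^{Dz}$ on $(-\infty,0]$), and then extract $f_1=f_2$ from the equality of the associated classical differential operators by probing with $u(x)=x_i\phi(x)$. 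This is shorter, conceptually transparent, and closer in spirit to simply noting that a second-order operator determines its principal symbol. In fact, your test-function computation (or a variant: plug in $u(x)=x_i^2\phi(x)$ once you know $\nabla g=0$ to get $2g=0$ directly) shows that the hypothesis $f_1=f_2$ near $\partial\mathcal O$ is not needed for bare injectivity, whereas the paper carries that hypothesis because it is genuinely required for the quantitative stability estimate (which is what the statistical consistency results actually rely on). So the trade-off is: your route gives injectivity cleanly and under weaker hypotheses, but yields no modulus of continuity; the paper's route is heavier but produces the quantitative bound that the downstream Bayesian contraction argument consumes.
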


See Theorem \ref{logstabthm} for details. That $f$ should be known near $\partial \mathcal O$ can be explained by the fact that the reflection (which is independent of $f$) dominates the local dynamics near $\partial \mathcal O$.

\vspace{2pt plus 1pt minus 1pt}

Statistical algorithms are often motivated by `population version' identification equations for unknown parameters, as in the one-dimensional case $d=1$ considered in \cite{GHR04, NS17}, who use ordinary differential equation (ODE) techniques to derive identities for $f$ in terms of the first eigenfunction of the transition operator $P_{D,f}$. This approach appears of limited use in the present multi-dimensional context $d>1$. Instead we shall maintain $\{P_{D,f}:f \in \mathcal F\}$ as our statistical model for natural choices of parameter spaces $\mathcal F \subset L^2(\mathcal O)$ of sufficiently smooth, positive, functions. This makes available the algorithmic toolbox of Bayesian statistics in infinite-dimensional parameter spaces which does not require any identification equations or inversion formulae. Instead one employs a Gaussian process prior $\Pi$  for the function-valued parameter $f$, see  \cite{vdVvZ08, S10, GV17, N23}, and updates according to Bayes' rule: if $p_{D,f}$ are the transition densities of $P_{D,f}$ (fundamental solutions), the posterior distribution is 
$$\Pi(B|X_0, X_D, \dots, X_{ND}) = \frac{\int_B \prod_{i=1}^N p_{D,f}(X_{(i-1)D}, X_{iD}) d\Pi(f)}{\int_\mathcal F \prod_{i=1}^N p_{D,f}(X_{(i-1)D}, X_{iD}) d\Pi(f)},~~~B \text{ measurable}.$$
As the `forward map' $ f \mapsto p_{D,f}$ can be evaluated by numerical PDE techniques for parabolic equations, one can leverage ideas from \cite{CRSW13} (see also \cite{GKNSSS15, CLM16, BGLFS17}) to propose computationally feasible MCMC methodology that draws approximate samples from $\Pi(\cdot|X_0, X_D, \dots, X_{ND})$, and the resulting ergodic averages approximate the posterior mean vector, which in turn gives an estimated output for $f$. See Section \ref{baydiff}, specifically Remark \ref{compu}, for details. 

\vspace{2pt plus 1pt minus 1pt}

Recent progress in Bayesian theory for non-linear inverse problems \cite{N20, MNP21, NW20, NT23}, \cite{N23} has clarified that such Bayesian methods can solve non-linear problems without `inversion formulae' as long as appropriate stability estimates for the forward map, here $f \mapsto P_{D,f}$, are available. Following this strategy we prove here a first statistical consistency result in multi-dimensional diffusion models with such `low frequency' measurements. 

\begin{theorem}\label{showoff}
Let $D>0$ and consider data $X_0, X_D, \dots, X_{N D}$ generated from the diffusion (\ref{diffus}) in a bounded smooth convex domain $\mathcal O$. Assume the ground truth $f_0 > 1/4$ is sufficiently regular in a Sobolev sense and equals $1/2$ near $\partial \mathcal O$. Assign an appropriate Gaussian process prior $\Pi$ to $(\theta(x): x \in \mathcal O)$, form $f_\theta= (1+e^\theta)/4$, and consider the random field $$(\bar f_N \equiv f_{\bar \theta_N}(x): x \in \mathcal O),~~\bar \theta_N=E^\Pi[\theta|X_0, X_D, \dots, X_{ND}],$$ arising from the posterior mean function. Then the posterior inference for the transition operators $P_{t,f_0}, t>0,$ as well as for $f_0$ is consistent, that is, as $N \to \infty $ and in $\mathbb P_{f_0}$-probability,
$$\|P_{t,\bar f_N} - P_{t,f_0}\|_{L^2 \to L^2} \to 0,~~\text{ and }~~\|\bar f_N - f_0\|_{L^2} \to 0,$$ where $\|\cdot\|_{L^2 \to L^2}$ denotes the operator norm on $L^2=L^2(\mathcal O)$.
\end{theorem}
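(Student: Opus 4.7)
The overall plan is to follow the Bayesian recovery scheme for non-linear inverse problems developed in \cite{MNP21, NT23, N23}, combining (i) posterior contraction for the \emph{forward} quantity $P_{D,f}$ in an operator-type distance naturally associated to the Markov likelihood, with (ii) the stability refinement of Theorem \ref{t0} (stated as Theorem \ref{logstabthm}), which inverts this into contraction for $f$ itself. Throughout, the prior $\Pi$ on $\theta$ will be constructed to be supported on fields vanishing near $\partial\mathcal O$ (so that $f_\theta\equiv 1/2$ near the boundary, matching the hypothesis of Theorem \ref{logstabthm}), arising from a Gaussian process with RKHS of high Sobolev regularity, and rescaled in the standard way to yield a small-ball rate $\eps_N\to 0$. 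The logistic link $f_\theta=(1+e^\theta)/4$ is smooth, bounded below by $1/4$, and uniformly Lipschitz in $\theta$ on Sobolev balls, so estimates on $\theta$ transfer cleanly to $f_\theta$.

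For (i), I verify the Ghosal--van der Vaart contraction conditions in the Markov-chain form used in \cite{NS17, LP21}. Three ingredients are needed. First, a \emph{prior mass} estimate: since the map $f\mapsto p_{D,f}$ is smooth by parabolic PDE theory, closeness of $f$ in a sufficiently strong Sobolev norm yields closeness of $\log p_{D,f}$ uniformly, and hence KL-closeness for the Markov likelihood under the stationary law $\P_{f_0}$; the concentration function of the Gaussian prior then gives the required $e^{-cN\eps_N^2}$ lower bound. Second, a \emph{sieve} of Sobolev balls of slowly growing radius with exponentially small prior complement mass and polynomial metric entropy. Third, \emph{tests} with exponentially small type-II error, built from Hellinger-distance tests on transition kernels using the uniform spectral gap of $P_{D,f}$ (yielding geometric ergodicity with invariant law $Unif(\mathcal O)$) together with Bernstein-type concentration for additive functionals of ergodic Markov chains. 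Together these deliver posterior contraction at some rate $\eps_N\to 0$ in the operator norm, specifically $\|P_{D,f}-P_{D,f_0}\|_{L^2\to L^2}\to 0$ in posterior probability.

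For (ii), I apply Theorem \ref{logstabthm}: on the Sobolev sieve, there is a continuity modulus $\omega$ (presumably logarithmic, reflecting the ill-posedness discussed around the hot-spots conjecture) such that $\|f-f_0\|_{L^2}\le\omega(\|P_{D,f}-P_{D,f_0}\|_{L^2\to L^2})$. This immediately upgrades operator-norm contraction to (possibly slow) $L^2$-contraction for $f$ under $\Pi(\cdot\mid X_0,\dots,X_{ND})$. To pass from posterior contraction of $f$ to convergence of the plug-in posterior mean $\bar f_N=f_{\bar\theta_N}$, I split the parameter space into the high-probability sieve, on which $\theta\mapsto f_\theta$ is uniformly Lipschitz and bounded, and a negligible remainder; a convexity/Jensen argument as in \cite{MNP21, N23} shows that $\bar\theta_N$ sits in the bulk of the posterior and hence $\|\bar f_N-f_0\|_{L^2}\to 0$ in $\P_{f_0}$-probability.

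Finally, for $P_{t,f_0}$ at arbitrary $t>0$, I use \emph{forward} parabolic stability: standard energy estimates for the heat semigroup generated by $\L_f$ give $\|P_{t,f}-P_{t,f_0}\|_{L^2\to L^2}\le C(t)\|f-f_0\|_{H^s}$ for a suitable Sobolev index $s$, so the Sobolev-consistency of $\bar f_N$ on the sieve (obtained by interpolation between the strong a priori Sobolev bound and the $L^2$-rate above) propagates to operator-norm consistency of $P_{t,\bar f_N}$ for every $t>0$. I expect the main obstacle not to lie in the posterior contraction machinery, which is comparatively routine once the spectral gap and Gaussian small-ball estimates are in place, but rather in the effective deployment of the quantitative stability in Theorem \ref{logstabthm}: the rate at which $\bar f_N\to f_0$ is dictated by the (likely logarithmic) modulus $\omega$, and tuning the sieve regularity and the prior rescaling so that the resulting $\omega(\eps_N)$ is genuinely $o(1)$ requires care.
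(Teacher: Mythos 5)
Your high-level strategy — forward posterior contraction for $P_{D,f}$, inversion via the stability estimate to get $f$-contraction, then a uniform-integrability/Jensen argument to pass to the posterior mean $\bar f_N$, and finally forward propagation to $P_{t,\bar f_N}$ — matches the paper's architecture (Theorems \ref{maint}, \ref{main}, then Subsection \ref{show}). Two points of concrete divergence are worth flagging.

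First, the test construction. You propose ``Hellinger-distance tests on transition kernels'' supplemented by Bernstein concentration. The paper explicitly rejects the Hellinger route: ``We cannot rely on Hellinger testing theory as in \cite{GV17, MNP21, N23} because our data does not arise from an i.i.d.\ model,'' and instead builds \emph{plug-in} tests $\Psi_N=1\{\|\hat P_J - P_{D,f_0}\|_{L^2\to L^2}\ge M\delta_N\}$ from the explicit projection estimator $\hat P_J$ of the transition operator (eq.\ (\ref{hatp})), with type-I and type-II error controlled by the Bernstein inequality for ergodic averages (Propositions \ref{bernie}, \ref{opconc}). The spectral gap enters through that concentration inequality, not through a Hellinger-distance comparison. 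Your sketch gestures at both ingredients but starts from the wrong primitive; as written it relies on a Markov-chain Hellinger theory you haven't supplied and which the paper deliberately circumvents. This is the principal gap.

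Second, the forward propagation to $P_{t,\bar f_N}$. You invoke a Lipschitz estimate $\|P_{t,f}-P_{t,f_0}\|\lesssim\|f-f_0\|_{H^s}$ and then interpolate between the $L^2$-rate and a Sobolev a priori bound to obtain Sobolev consistency of $\bar f_N$. The paper's Theorem \ref{lfsball} provides a far stronger forward Lipschitz estimate directly in the $(H^1_c)^*$ norm, hence in particular in $L^2$: $\|P_{t,f}-P_{t,f_0}\|_{HS}\lesssim\|f-f_0\|_{L^2}$ uniformly over $H^s$-bounded $f$'s (for any fixed $t>0$). This makes the interpolation step unnecessary — once $\|\bar f_N - f_0\|_{L^2}\to 0$ and $\|\bar\theta_N\|_{H^s}=O_{\mathbb P_{f_0}}(1)$ are in hand, the HS (and hence operator-norm) consistency of $P_{t,\bar f_N}$ follows immediately. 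Your route is probably salvageable (the $H^s$-Lipschitz claim is plausible for $s>1+d/2$ via a semigroup perturbation bound, and interpolation down to $H^\alpha$, $1+d/2<\alpha<s$, would give what you need), but it is more circuitous and weaker than the estimate the paper already has available.
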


See Theorems \ref{maint} and \ref{main} for details. Next to the stability estimates underlying Theorem \ref{t0}, a main ingredient of our proofs is an estimate (Theorem \ref{lfsball}) on the `information' (Kullback-Leibler) distance of the underlying statistical experiment in terms of a negative Sobolev norm on $\mathcal F$. This result is of independent interest and also sharp (in view of Theorems \ref{opmx}, \ref{opmxlb}).

\vspace{2pt plus 1pt minus 1pt}

Our proofs provide a \textit{rate} of convergence in the last limits, and the rate obtained for $P_{t,f}$ cannot be improved (as we show) at the `observed time' $t=D$, corresponding to `prediction risk'. For the parameters $f$ and $P_{t,f}, t<D$, our inversion rates are potentially slow (i.e., only inverse logarithmic in $N$). The question of optimal recovery in these non-linear inverse problems is delicate as they (implicitly or explicitly) involve solving a `backward heat equation' from knowledge of $P_{D,f}$ alone. We shed some light on the issue and exhibit infinite-dimensional parameter spaces of $f$'s where faster than logarithmic rates (algebraic in $1/N$) can be obtained. These are based on certain spectral `symmetry' hypotheses on the domain $\mathcal O$ and on the diffusion process. For  $d=1$ these hypotheses are always satisfied and our theory thus recovers the one-dimensional results from \cite{GHR04, NS17} as a special case (but with novel proofs based on PDE theory). In multi-dimensions $d \ge 2$ and for $f$ in a $\|\cdot\|_\infty$-neighbourhood of the constant function, we show that the required symmetries of $\mathcal O$ can be related to the `hot spots conjecture' from spectral geometry  \cite{BB99, BW99, JN00, AB04, S20, JM20}, providing further incentives for the study of this topic. The topic of `fast' rates beyond that conjecture will be investigated in future research.

\vspace{2pt plus 1pt minus 1pt}

In principle, the Bayesian approach can be expected to give valid inferences for any measurement regime and hence should work irrespectively of whether $D \to 0$ or not. In fact, a `high frequency' regime is explicitly investigated in the recent contribution \cite{HR22} who show posterior consistency if $D \to 0$ sufficiently fast compared to $N$ (but still such that the observation horizon $N D \to \infty$). We also refer to Sec.~3.3 in \cite{GR22} for a discussion of the hypothetical case when the entire trajectory of $(X_t)$ is observed. More generally, the recent contributions \cite{S16, NR20, GR22, AWS22} to non-parametric inference for multi-dimensional diffusions (Bayesian or not) contain many further references.

\section{Main results}

We are given discrete observations $X_0, X_D, \dots, X_{N D}, N \in \mathbb N,$ of the solution $(X_t:  t \ge 0)$ of the SDE (\ref{diffus}) where $X_0 \sim Unif(\mathcal O)$, that is, the diffusion is started in its (constant) invariant distribution. If $X_0=x$ for some fixed $x$, then our proofs work as well in view of the exponentially fast mixing (\ref{mixing}) of the process towards the uniform law $\mu$, by just discarding the `burn-in phase', that is, by letting the process evolve for a while before one starts to record measurements.  We emphasise again that the time interval $D>0$ between consecutive observations remains \textit{fixed} in the $N \to \infty$ asymptotics.

The domain $\mathcal O$ supporting our diffusion process is a bounded convex open subset of $\R^d$, and to avoid technicalities we assume that the boundary of $\mathcal O$ is smooth, ensuring in particular the existence of all `reflecting' normal vectors $\nu$ at $\partial \mathcal O$. Throughout $L^2(\mathcal O)$ will denote the Hilbert space of square integrable functions for Lebesgue measure $dx$ on $\mathcal O$. We also assume (solely for notational convenience) that the volume of $\mathcal O$ is normalised to one, $vol(\mathcal O)=1$.

The physical model underlying (\ref{diffus}) describes the intensity $(u(t,x): t >0, x \in \mathcal O)$ of diffusion in an insulated medium by the equation $\partial u/\partial t = - \nabla \cdot J$ for flux $J=-f \nabla u$ (e.g., p.361f.~in \cite{TII}, and after (\ref{baseheat}) below). For smooth test functions $\phi$, let the elliptic operator $\mathcal L_f$ be given by the action
\begin{equation} \label{generator}
\mathcal L_f \phi= \nabla \cdot (f \nabla \phi) =  \nabla f \cdot \nabla \phi + f \Delta \phi = \sum_{j=1}^d \frac{\partial}{\partial x_j} \Big( f \frac{\partial }{\partial x_j} \phi \Big),
\end{equation}
where $\nabla, \nabla \cdot, \Delta$ denote the gradient, divergence and Laplace operator, respectively. Then $u$ solves the heat equation for $\mathcal L_f$ with Neumann boundary conditions $\partial u/\partial \nu=0$ on $\partial \mathcal O$. Its fundamental solutions $p_{t,f}(\cdot, \cdot): \mathcal O \times \mathcal O \to [0,\infty)$ describe the probabilities $\int_U p_{t,f}(x,y)dy$ for the position of a diffusing particle to lie in a region $U$ at time $t_0+t$ when it was at $x \in \mathcal O$ at time $t_0$. More generally the transition operator $P_{t,f}$ describes a self-adjoint action on $L^2(\mathcal O)$,  
\begin{equation} \label{traninf}
P_{t,f}(\phi) = \int_\mathcal O p_{t,f}(\cdot, y)\phi(y)dy,~~\phi \in L^2(\mathcal O).
\end{equation}
The process $(X_t: t \ge 0)$ from (\ref{diffus}) is the unique Markov random process with these transition probabilities, infinitesimal generator $\mathcal L_f$, and equilibrium (invariant) probability density $d\mu =1$ on $\mathcal O$. The generator $\mathcal L_{f}$ with Neumann boundary condition is characterised by an infinite sequence of (orthonormal) eigen-pairs $(e_{j, f}, -\lambda_{j, f}) \in L^2(\mathcal O) \times (-\infty, 0], j \ge 0,$ where $e_{0,f}$ is the constant eigenfunction corresponding to $\lambda_{0}=0$. By ellipticity the first eigenvalue satisfies the spectral gap estimate $\lambda_{1,f}>0$ (see (\ref{topevar}) below). The transition operators $P_{t,f}$ from (\ref{traninf}) can be described in this eigen-basis via the eigenvalues $\mu_{j, f} = e^{-t\lambda_{j,f}}$, and their densities $p_{t,f}$ are uniformly bounded over $\mathcal O \times \mathcal O$. These well-known facts are reviewed in Sec.~\ref{prfs}.

\vspace{2pt plus 1pt minus 1pt}

Some more notation: $C(\bar{\mathcal O})$ denotes the space of uniformly continuous functions on $\mathcal O$. The Sobolev and H\"older spaces $H^\alpha(\mathcal O), C^\alpha(\mathcal O)$ of maps defined on $\mathcal O$ are defined as all functions that have partial derivatives up to order $\alpha \in \mathbb N$ defining elements of $L^2(\mathcal O), C(\bar{\mathcal O})$, respectively, and we set $C^\infty(\mathcal O) = \cap_{\alpha>0} C^\alpha(\mathcal O)$, $C^0(\mathcal O)=C(\bar{\mathcal O})$ by convention. Attaching the subscript $c$ to any of the preceding spaces denotes the linear subspaces of such functions of \textit{compact support} within $\mathcal O$. The Sobolev sub-spaces $H^k_0$ of $H^k$ are the completions of $C^\infty_c(\mathcal O)$ for the $H^k$-norms. The symbols $\|\cdot\|_{H \to H}, \|\cdot\|_{HS}$ denote the operator and Hilbert-Schmidt (HS) norm of a linear operator on a Banach space $H$, respectively. We denote by $\|\cdot\|_\infty$ the supremum norm and by $\|\cdot\|_B$ the norm of a normed space $B$, with dual space $B^*$. Throughout, $\lesssim, \gtrsim, \simeq$ denotes inequalities (in the last case two-sided) up to fixed multiplicative constants, while $Z \sim \mu$ means that a random variable $Z$ has law $\mu$.

\subsection{Optimal recovery of the transition operator $P_{D,f}$}

Given our data, $P_{t,f}$ can be estimated directly at $t=D$ by evaluating a suitable set of basis functions of $L^2(\mathcal O)$ at the observed `transition pairs' $(X_{iD}, X_{(i+1)D})_{i =0}^{N-1}$. For instance if we take the linear span of the first $J$ eigenfunctions of the Neumann Laplacian $\mathcal L_f, f=1$, then a projection estimator for $P_{D,f}$ is described in (\ref{hatp}) below. Our first theorem establishes a bound on the convergence rate for recovery of $P_{D,f}$ in operator norm $\|\cdot\|_{L^2 \to L^2}$ if the approximating space is of sufficiently high dimension $J=J_N \to \infty$ depending on the Sobolev regularity of $f$.

\begin{theorem}\label{opmx}
Consider data $X_0, X_D, \dots, X_{N D},$ at fixed observation distance $D>0$, from the reflected diffusion model (\ref{diffus}) on a bounded convex domain $\mathcal O \subset \R^d$ with smooth boundary, started at $X_0 \sim Unif(\mathcal O)$, with $f_0 \in  C^2 \cap H^s, s>2d-1,$ such that $\inf_{x \in \mathcal O}f_0(x) \ge f_{min}>0$, $U \ge \|f_0\|_{H^s}+\|f_0\|_{C^2}$. Then the estimator $\hat P_J$ from (\ref{hatp}) with choice $J_N \simeq N^{d/(2s+2+d)}$ satisfies,
\begin{equation}\label{l2rate}
\|\hat P_J - P_{D,f_0}\|_{L^2 \to L^2} =O_{\mathbb P_{f_0}}\big(N^{-(s+1)/(2s+2+d)}\big),~~ N \to \infty,
\end{equation}
with constants $C=C(s, D,U,d, \mathcal O, f_{min})>0$ in the $O_{\mathbb P_{f_0}}$ notation. 
\end{theorem}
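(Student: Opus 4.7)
The plan is a bias--variance decomposition in operator norm based on the Neumann Laplacian eigenbasis, coupled with matrix concentration in the variance step. Since $X_0 \sim \mu = \mathrm{Unif}(\mathcal{O})$ is the invariant law, stationarity yields $M_{jk} := \langle e_j, P_{D,f_0} e_k\rangle_{L^2} = \E_{f_0}[e_j(X_0) e_k(X_D)]$ for the $L^2$-orthonormal eigenbasis $\{e_j\}$ of the Neumann Laplacian, so that the estimator $\hat P_J$ from (\ref{hatp}) is represented by the sample matrix $\hat M_{jk} = N^{-1}\sum_{i=1}^N e_j(X_{(i-1)D}) e_k(X_{iD})$ for $0 \le j,k < J$. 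Writing $P_J$ for the $L^2$-projection onto $\mathrm{span}(e_0, \ldots, e_{J-1})$, I would split
$$\|\hat P_J - P_{D,f_0}\|_{L^2 \to L^2} \;\le\; \|\hat P_J - P_J P_{D,f_0} P_J\|_{L^2 \to L^2} + \|P_J P_{D,f_0} P_J - P_{D,f_0}\|_{L^2 \to L^2}$$
into a stochastic (variance) and a deterministic (bias) piece.

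For the bias, self-adjointness of $P_J$ and $P_{D,f_0}$ reduces the second summand to $2\|(I-P_J) P_{D,f_0}\|_{L^2 \to L^2}$. Parabolic regularity theory, leveraging the assumption $s > 2d-1$ (so that $f_0$ is Hölder of high order by Sobolev embedding and elliptic bootstrapping is legitimate), implies that at positive time $D$ the semigroup smooths: $P_{D,f_0}: L^2(\mathcal{O}) \to H^{s+1}(\mathcal{O})$ with norm depending only on $D, U, f_{\min}, \mathcal{O}$. Combined with the Weyl-type spectral bound $\|(I-P_J) u\|_{L^2} \lesssim \lambda_J^{-(s+1)/2} \|u\|_{H^{s+1}} \lesssim J^{-(s+1)/d} \|u\|_{H^{s+1}}$ (using $\lambda_J \simeq J^{2/d}$), this yields bias of order $J^{-(s+1)/d}$.

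For the variance, note that $\hat P_J - P_J P_{D,f_0} P_J$ is supported on the $J$-dimensional subspace, so its $L^2$-operator norm equals the matrix operator norm $\|\hat M - M\|$. Uniform boundedness of the transition density (a standard parabolic estimate on bounded smooth domains with $f \ge f_{\min} > 0$) gives the key uniform-in-$(j,k)$ control $\E[e_j^2(X_0) e_k^2(X_D)] = \int\!\!\int e_j^2(x) e_k^2(y) p_{D,f_0}(x,y)\,dx\,dy \le \|p_{D,f_0}\|_\infty$, so each coordinate of $\hat M - M$ has variance of order $1/N$ once the cross-time correlations are controlled by exponential mixing via the spectral gap $\lambda_{1,f_0} > 0$. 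A matrix Bernstein inequality adapted to geometrically ergodic Markov chains (via blocking or a Paulin-type bound) then delivers the operator-norm concentration $\|\hat M - M\| = O_{\P_{f_0}}(\sqrt{J/N})$.

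Balancing bias $J^{-(s+1)/d}$ against variance $\sqrt{J/N}$ produces $J \simeq N^{d/(2s+2+d)}$ and the claimed rate $N^{-(s+1)/(2s+2+d)}$. The principal obstacle is the bias step: pinning down the sharp parabolic smoothing $P_{D,f_0}: L^2 \to H^{s+1}$ with only $H^s$ coefficient regularity and under Neumann boundary conditions, which is precisely what drives the technical hypothesis $s > 2d-1$. I emphasise that a naive Hilbert--Schmidt domination in the variance step would instead yield the suboptimal rate $N^{-(s+1)/(2s+2+2d)}$, so exploiting operator-norm matrix concentration is essential to match the dimension-dependent denominator $2s+2+d$ appearing in the theorem.
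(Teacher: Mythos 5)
Your overall architecture---a bias--variance decomposition in operator norm, with bias controlled by the semigroup smoothing $P_{D,f_0}: L^2 \to H^{s+1}$ plus a Weyl-type approximation bound giving $J^{-(s+1)/d}$, and a Markov-chain concentration inequality on a $J$-dimensional approximating space---is exactly the strategy used in the paper, and your observation that Hilbert--Schmidt domination of the stochastic term would only give $N^{-(s+1)/(2s+2+2d)}$ correctly identifies why operator-norm concentration is essential.

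However, the variance step as you describe it has a genuine gap. Matrix Bernstein (whether for i.i.d.\ summands or its Markov-chain adaptations via blocking) carries an unavoidable dimensional prefactor: the deviation bound scales as $\sqrt{v \log J}$, where the matrix variance proxy here is $v \simeq NJ$ (since $\|\E[A_iA_i^T]\| \lesssim J$, not $O(1)$: the trace over the $J$ outgoing directions is not a rank-one bilinear form). This yields $\|\hat M - M\| = O_{\P}(\sqrt{J\log J/N})$, and after balancing this produces an extra $(\log N)^{(s+1)/(2s+2+d)}$ in the final rate---incompatible with the clean statement of the theorem and with the matching lower bound. The paper avoids the log by a different route: an $\epsilon$-net covering of the unit sphere in $\R^J$ of cardinality $A^J$, reducing the operator norm to a maximum of $A^J$ \emph{scalar} ergodic sums $\mathbf u^T(\hat{\mathbf P}-\mathbf P)\mathbf v$, each with $L^2(P_{D,f_0})$-variance $O(1)$ (not $O(J)$), and then a scalar Bernstein inequality for geometrically ergodic Markov chains (via Paulin) with the spectral gap $\lambda_{1,f_0}>0$. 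The union bound cost $A^J$ is then absorbed by the exponent $e^{-cJ}$ from the scalar tail, rather than appearing as a $\log J$ multiplying the variance. Your plan should be revised to use this covering argument.

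A second, smaller misattribution: you claim $s>2d-1$ is what makes the bias-side parabolic smoothing legitimate. In fact the smoothing $P_{D,f_0}:L^2\to H^{s+1}$ and the associated norm equivalences (Proposition \ref{sobald}B) and the spectral representation) only require $s>d$. The hypothesis $s>2d-1$ is driven by the \emph{variance} step: the test functions $u(x)v(y)$ on the net have sup-norm $H\lesssim J^{2\tau+1}$ (from $\|e_j\|_\infty\lesssim j^\tau$), and for the Bernstein tail to retain its sub-Gaussian form up to scale $\sqrt{JN}$ one needs $J^{\bar\tau}\lesssim\sqrt N$ for some $\bar\tau>5/2$; plugging $J\simeq N^{d/(2s+2+d)}$ into this constraint is precisely what forces $s>2d-1$. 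Your plan, which does not mention sup-norm control of the test functions at all, would also need to supply this step to make the concentration inequality applicable.
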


Our proof gives a non-asymptotic concentration inequality for the bound in (\ref{l2rate}), see Proposition \ref{opconc}. Moreover, as in Corollary \ref{opconc2} below we can deduce from (\ref{l2rate}) the convergence rate
\begin{equation}\label{alpharate}
\|\hat P_J - P_{D,f_0}\|_{H^\alpha \to H^\alpha} =O_{\mathbb P_{f_0}}\big(N^{-(s+1-\alpha)/(2s+2+d)}\big),~~0<\alpha \le s+1,
\end{equation}
for (stronger) operator norms on the $H^\alpha$ spaces. This rate is optimal in an information theoretic `minimax' sense (cf.~Ch.6 in \cite{GN16}), as we now show for the case $\alpha=2$ relevant below.

\begin{theorem}\label{opmxlb}
In the setting of Theorem \ref{opmx}, there exists a bounded convex domain $\mathcal O \subset \mathbb R^d$ with smooth boundary and a constant $c=c(s, D, U, d, f_{min})>0$ such that
\begin{equation}
\liminf_{N \to \infty} \inf_{\tilde P_N} \sup_{f: \|f\|_{H^s(\mathcal O)} \le U, f \ge f_{min}>0} \mathbb P_{f}\Big(\|\tilde P_N - P_{D,f}\|_{H^2 \to H^2} > cN^{-(s-1)/(2s+2+d)}\Big) >1/4,
\end{equation}
where the infimum extends over all estimators $\tilde P_N$ of $P_{D,f}$ (i.e., measurable functions of the $X_0, X_D, \dots, X_{N D}$ taking values in the space of bounded linear operators on $L^2$). 
\end{theorem}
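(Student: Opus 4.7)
The plan is to establish Theorem \ref{opmxlb} via the classical Fano strategy for minimax lower bounds (cf.~Ch.~6 in \cite{GN16}). Fix a smooth bounded convex domain $\mathcal O$ of unit volume and a constant $c > f_{min}$, take the null hypothesis $f_0 \equiv c$, and let $\{\psi_{j,k}: k \in \Lambda_j\}$ be a compactly supported orthonormal wavelet system at dyadic scale $2^j$, chosen so that each $\psi_{j,k}$ is supported inside a fixed compact set $A$ strictly interior to $\mathcal O$, with $|\Lambda_j| \asymp 2^{jd}$. By Varshamov--Gilbert there exist $K \ge 2^{c'|\Lambda_j|}$ sign vectors $\varepsilon^{(1)}, \ldots, \varepsilon^{(K)} \in \{\pm 1\}^{\Lambda_j}$ with pairwise Hamming distance $\geq |\Lambda_j|/8$. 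I would then define
\[
f_i = c + \delta \sum_{k \in \Lambda_j} \varepsilon^{(i)}_k \psi_{j,k}, \qquad i = 1, \ldots, K,
\]
for an amplitude $\delta>0$; the constraints $f_i \geq f_{min}$ and $\|f_i\|_{H^s} \leq U$ force $\delta \lesssim 2^{-j(s+d/2)}$.

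The Kullback--Leibler bound is provided by Theorem \ref{lfsball}, which gives $KL(\mathbb P_{f_i}^N, \mathbb P_{f_j}^N) \lesssim N \|f_i-f_j\|^2$ in the relevant negative Sobolev norm on $\mathcal F$. Since wavelets satisfy $\|\psi_{j,k}\| \asymp 2^{-j}$ in that norm, by orthogonality the KL evaluates to $\lesssim N \delta^2 2^{j(d-2)}$. Fano's inequality demands this to be $\lesssim \log K \asymp 2^{jd}$, which amounts to $\delta \lesssim 2^j/\sqrt{N}$. Balancing the Sobolev and KL constraints pinpoints the minimax scale $2^j \simeq N^{1/(2s+d+2)}$ and $\delta \simeq N^{-(s+d/2)/(2s+d+2)}$.

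The main technical obstacle is the separation bound $\|P_{D,f_i} - P_{D,f_j}\|_{H^2 \to H^2} \gtrsim \delta \cdot 2^{j(1+d/2)}$, which under the above calibration equals exactly the target rate $N^{-(s-1)/(2s+2+d)}$. I would approach this via the Duhamel representation
\[
P_{D,f_i} - P_{D,f_j} = \int_0^D P_{D-s, f_i} \, \nabla\cdot\bigl((f_i-f_j)\nabla\bigr) \, P_{s,f_j} \, ds,
\]
controlling the quadratic nonlinear remainder uniformly by standard parabolic semigroup perturbation estimates (exploiting the smallness of $\delta \|f_i-f_j\|_\infty \simeq \delta^2 \cdot 2^{jd/2}$), and thereby reducing to a lower bound on the linearisation $T[g] := \int_0^D P_{D-s,f_0} \nabla\cdot(g\nabla) P_{s,f_0} ds$ at $g = f_i - f_j$. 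Testing $T[g]$ on the first nonconstant Neumann Laplace eigenfunction $\phi_1$ and expanding in the $\mathcal L_{f_0}$-eigenbasis $\{(\phi_\ell, -\lambda_\ell)\}$, the explicit form of the matrix entries $T[g]_{m,1}$ combined with the key cancellation whereby the $\lambda_m^{-1}$ decay of the semigroup-integrated entries is exactly compensated by the $(1+\lambda_m)^2$ weight of the $H^2$ norm, yields
\[
\|T[g]\phi_1\|_{H^2}^2 \gtrsim \|\nabla\cdot(g\nabla\phi_1)\|_{L^2}^2.
\]
Because the wavelets have essentially disjoint supports, $\|\nabla\psi_{j,k}\|_{L^2} \simeq 2^j$, and $|\nabla\phi_1|$ is bounded below on $A$, the right hand side is $\gtrsim \delta^2 |\Lambda_j| \cdot 2^{2j} \asymp \delta^2 \cdot 2^{j(d+2)}$ for any Varshamov--Gilbert-separated pair. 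Invoking Fano's inequality with these KL and separation bounds closes the proof. The delicate ingredient is the exact $\alpha=2$ cancellation, which is precisely why this operator-norm level is the natural endpoint matching the upper-bound family (\ref{alpharate}).
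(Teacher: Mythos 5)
Your overall strategy is the same as the paper's (wavelet perturbations of a constant, Varshamov--Gilbert/Fano via Theorem 6.3.2 of \cite{GN16}, KL control via Theorem \ref{lfsball}, and the $\lambda_k^{-1}$ versus $\lambda_k^{2}$ cancellation in $H^2$), and your calibration of scales is correct. However, there is a genuine gap at the heart of the separation argument: you \emph{assert} that $|\nabla\phi_1|$ is bounded below on the compact set $A$ for a generically chosen bounded convex domain $\mathcal O$. This is precisely where the difficulty lies --- the non-vanishing of $\nabla e_{1}$ in the interior is exactly Rauch's hot spots conjecture, which is open for general convex domains in $d\ge 2$. The theorem statement says ``there exists a bounded convex domain,'' and the existence claim is doing real work: the paper constructs a specific domain, the smoothed hyperrectangle $\mathcal O_{m,w}$ from (\ref{om}), and proves in Proposition \ref{cyllapsimp} and Theorem \ref{cylinderth} that the first Neumann eigenvalue is simple and the corresponding eigenfunction has a strictly positive first partial derivative on compact subsets, with $C^2$-stability of this fact under small $\|\cdot\|_\infty$-perturbations of $f$. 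Without this (or some substitute piece of spectral geometry) the separation bound $\|\nabla\cdot(g\nabla\phi_1)\|_{L^2}\gtrsim \delta\, 2^{j(1+d/2)}$ has no justification.

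A secondary point: your route passes through a genuine linearisation, replacing $P_{D-s,f_i}$ and $P_{s,f_j}$ in Duhamel by $P_{D-s,f_0}$ and $P_{s,f_0}$, and then asks for a remainder bound in the $H^2\to H^2$ operator norm. The remainder involves $\nabla\cdot(g\nabla)$, which loses derivatives, sandwiched between semigroups one of which loses its smoothing near the endpoints of the time integral; controlling this in $H^2\to H^2$ is not a routine "standard parabolic perturbation estimate" and you do not supply it. The paper sidesteps this entirely: the pseudo-linearisation identity (\ref{repspec}) is \emph{exact} (one operator expanded in $e_{k,f}$, the test function an eigenfunction of $\mathcal L_{f_0}$), and (\ref{halfway}) follows without any remainder. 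The price is that the lower bound on the gradient must hold for $e_{1,f_{m'}}$ (the eigenfunction of each perturbed $\mathcal L_{f_{m'}}$) rather than for the base eigenfunction $\phi_1$, but this is exactly what Theorem \ref{cylinderth}B) supplies by perturbation theory under $\|f_{m'}-1\|_\infty<\kappa$. So the two gaps in your sketch --- the spectral-geometry input on the domain, and the perturbation-theoretic stability of the eigenfunction/gradient --- are in fact the two pieces of work the paper devotes Section 3.6 to, and cannot be elided.
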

The proof relies on some results from spectral geometry that require an appropriate choice of domain, in fact $\mathcal O$  is the `smoothed' hyperrectangle in (\ref{om}) below for $w \ge 2$ and $m$ large enough. The lower bound remains valid when restricting the supremum to $f$'s that are constant near $\partial \mathcal O$. The above theorems show that the minimax rate in the class of reflected diffusion processes is faster by the power of a $\log N$-factor than the minimax rate of recovery of a general Markovian transition operator in the same regularity class, cf.~Thm 2.2 in \cite{LP21}.

\subsection{Injectivity of $f \mapsto P_{t,f} \mapsto P_{D,f}$, $t<D$}

\subsubsection{Stability estimates}

We now turn to the problem of guaranteeing validity of inference on $f$, and in turn also for $P_{t,f}$ for any $t>0$. When $D \to 0$ in the asymptotics, ideas from stochastic calculus come into force and the inference problem becomes tractable either by direct techniques that identify the parameter $f$ -- see \cite{HR22} and references therein; or by steady state approximations to the diffusion equation (discussed in the introduction). 

The `low frequency' regime where $D>0$ is fixed was studied in \cite{GHR04, NS17} when $d=dim(\mathcal O)=1$. The key idea of \cite{GHR04} is to infer $f$ from a principal component analysis (PCA) of the operator $P_{D,f}$. Following their line of work when $d>1$ is not possible as they rely on explicit identification equations for $f$ based on ODE techniques (see Section 3.1 in \cite{GHR04}), and in particular on the simplicity of the first non-zero eigenvalue $\lambda_{1,f}$ of $\mathcal L_f$ -- both ideas do not extend to $d \ge 2$. Instead we follow the route via `stability estimates' used recently in work on non-linear statistical inverse problems, see \cite{MNP21}, \cite{GN20, AN19}  and also \cite{N23}. We are not aware of an explicit reference that establishes the injectivity of the `forward' map $f \mapsto P_{D,f}$ for arbitrary fixed $D>0$ (and $d \ge 2$), let alone a stability estimate. Our first result achieves this when $f$ is known near the boundary of $\mathcal O$.

\begin{theorem}\label{logstabthm}
Let $\mathcal O$ be a bounded convex domain in $\R^d, d \in \mathbb N,$ with smooth boundary.  Let $f, f_0$ be bounded from below by a constant $f_{min}>0$, suppose $f=f_0$ on $\mathcal O \setminus \mathcal O_{0}$ for some compact subset $\mathcal O_{0}$ of $\mathcal O$ and that $\|f\|_{C^2} +\|f_0\|_{C^2} \le U$ for some $U$. Then there exists a positive constant depending on $D, d,\mathcal O_0, \mathcal O, U, f_{min}$ such that
\begin{equation}\label{logstab0}
\|f-f_0\|_{L^2(\mathcal O)} \leq C  \Big(\log \frac{1}{\|P_{D,f}-P_{D,f_0}\|_{L^2 \to L^2}} \Big)^{-2/3}.
\end{equation}
In particular if $P_{D,f} = P_{D, f_0}$ co-incide as linear operators on $L^2(\mathcal O)$ for some $D>0$, we must have $f=f_0$ on $\mathcal O$.
\end{theorem}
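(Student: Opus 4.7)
\emph{Plan.} Let $\varepsilon := \|P_{D,f}-P_{D,f_0}\|_{L^2\to L^2}$. The proof splits into (i) a deterministic \emph{extraction} step that reduces $\|f-f_0\|_{L^2}$ to estimates of $\|(\mathcal L_f-\mathcal L_{f_0})\phi\|_{L^2}$ on well-chosen test functions, and (ii) a \emph{holomorphic-calculus} step that passes from closeness of the heat semigroups at time $D$ to closeness of their generators, with a logarithmic loss reflecting the backward-heat ill-posedness.

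\emph{Step 1 (extraction of $f-f_0$).} Choose a cut-off $\chi\in C_c^\infty(\mathcal O)$ with $\chi\equiv 1$ on a fixed open set $V$ that compactly contains $\mathcal O_0$, and for $i=1,\dots,d$ set $\phi_i := \chi\, x_i \in C^\infty(\bar{\mathcal O})$. Since $f-f_0$ and $\nabla(f-f_0)$ both vanish identically on $\mathcal O\setminus\mathcal O_0$, and since on $V$ one has $\nabla\phi_i=e_i$ and $\Delta\phi_i=0$, a product-rule computation yields
\[
(\mathcal L_f-\mathcal L_{f_0})\phi_i \;=\; \nabla\cdot\bigl((f-f_0)\nabla\phi_i\bigr) \;=\; \partial_i(f-f_0) \quad\text{on all of } \mathcal O.
\]
Summing $L^2$-norms, $\|\nabla(f-f_0)\|_{L^2}^2 = \sum_i \|(\mathcal L_f-\mathcal L_{f_0})\phi_i\|_{L^2}^2$. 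Because $f-f_0$ vanishes on the positive-measure set $\mathcal O\setminus\mathcal O_0$, Poincar\'e--Friedrichs gives $\|f-f_0\|_{L^2}\lesssim \|\nabla(f-f_0)\|_{L^2}$, and it remains to bound each $\|(\mathcal L_f-\mathcal L_{f_0})\phi_i\|_{L^2}$ by a negative power of $\log(1/\varepsilon)$.

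\emph{Step 2 (semigroup-to-generator inversion).} Fix a threshold $\sigma\in(0,1)$ to be chosen. By Weyl's law $\lambda_{j,f_\bullet}\asymp j^{2/d}$, valid uniformly under the a priori bound $\|f\|_{C^2}+\|f_0\|_{C^2}\le U$, one can select $\sigma = e^{-DL}$ together with a rectangular contour $\Gamma\subset\mathbb C$ encircling $[\sigma,1]$ that stays at distance $\delta$ from $\sigma(P_{D,f})\cup\sigma(P_{D,f_0})$, with $\delta$ comparable to $\sigma$ times a polynomial factor in $L$. Let $E_\sigma^{f_\bullet}:=\frac{1}{2\pi i}\oint_\Gamma (z-P_{D,f_\bullet})^{-1}\,dz$ denote the associated Riesz spectral projections. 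Applying the holomorphic functional calculus with the symbol $z\mapsto D^{-1}\log z$ and the second resolvent identity, one obtains
\[
\|\mathcal L_f E_\sigma^f - \mathcal L_{f_0}E_\sigma^{f_0}\|_{L^2\to L^2} \;\le\; \frac{C\,|\Gamma|\,|\log\sigma|}{\delta^2}\,\varepsilon.
\]
Decomposing $(\mathcal L_f-\mathcal L_{f_0})\phi_i = [\mathcal L_f E_\sigma^f - \mathcal L_{f_0}E_\sigma^{f_0}]\phi_i + \mathcal L_f(I-E_\sigma^f)\phi_i - \mathcal L_{f_0}(I-E_\sigma^{f_0})\phi_i$, the first bracket is controlled by the above display; the two high-frequency tails are estimated by elliptic regularity, since the $C^2$-regularity of $f, f_0$ permits iterating $\mathcal L_{f_\bullet}$ a bounded number of times on the smooth function $\phi_i$, yielding a decay $\|\mathcal L_{f_\bullet}(I-E_\sigma^{f_\bullet})\phi_i\|_{L^2}\lesssim |\log\sigma|^{-\beta}$ for a specific $\beta>0$. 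Balancing $\varepsilon\cdot e^{c|\log\sigma|}$ against $|\log\sigma|^{-\beta}$ by a suitable choice of $\sigma$ gives a bound of the form $(\log(1/\varepsilon))^{-\beta}$; inserting this into Step 1 yields (\ref{logstab0}), with the specific exponent $2/3$ dictated by how many iterations of $\mathcal L_{f_\bullet}$ are permitted under the $C^2$ regularity. The injectivity assertion is immediate: if $P_{D,f}=P_{D,f_0}$ then $\varepsilon=0$ and the right-hand side of (\ref{logstab0}) vanishes.

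\emph{Main obstacle.} The technical crux is Step 2: one must produce a contour $\Gamma$ simultaneously avoiding both (dense) spectra with a quantitatively useful gap $\delta$, and then track how $\delta$ scales against $|\log\sigma|$ as $\sigma\downarrow 0$ using only the a priori $C^2$ regularity. This is the direct quantitative manifestation of the backward-heat ill-posedness inherent to $f\mapsto P_{D,f}$ and is what forces a logarithmic, rather than polynomial, stability rate.
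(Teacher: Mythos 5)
Your Step 1 is correct and is a genuinely different (and elegant) extraction device from the paper's: the choice $\phi_i=\chi x_i$ makes $(\mathcal L_f-\mathcal L_{f_0})\phi_i=\nabla\cdot((f-f_0)\nabla\phi_i)=\partial_i(f-f_0)$ exactly because $f-f_0$ is supported where $\nabla\phi_i=e_i$, and Poincar\'e--Friedrichs then bounds $\|f-f_0\|_{L^2}$ by $\sum_i\|(\mathcal L_f-\mathcal L_{f_0})\phi_i\|_{L^2}$ with no further loss. The paper instead uses a solution map: it sets $u_{f,\phi}=\mathcal L_f^{-1}\phi$ for a single well-chosen zero-mean $\phi\in C_c^\infty$, bounds $\|f-f_0\|_{L^2}\lesssim\|u_{f,\phi}-u_{f_0,\phi}\|_{H^2}$ via a transport-operator lemma (Lemma \ref{zauberstab}), and then spends an interpolation step to descend to $\|u_{f,\phi}-u_{f_0,\phi}\|_{L^2}^{1/3}$. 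So the two extraction steps are structurally different; yours avoids the cube root altogether, which at first sight looks like a strict improvement.

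However, Step 2 has two problems, one repairable and one not (within the scheme you propose). The repairable one is the gap $\delta$: you assert without proof that a rectangular contour encircling $[\sigma,1]$ can be kept at distance $\delta\sim\sigma\,\mathrm{poly}(|\log\sigma|)$ from \emph{both} spectra. This needs an argument (a pigeonhole on the $\sim L^{d/2}$ eigenvalues in a dyadic window near $\sigma$ will give such a gap, or one can push the contour off the real axis and exploit self-adjointness, or avoid contours entirely as the paper does via an operator-Lipschitz estimate $\|\kappa_J(A)-\kappa_J(B)\|\lesssim\|\kappa_J\|_{B^1_{\infty 1}}\|A-B\|$ applied to a smooth cut-off $\kappa_J$ of $-D/\log z$). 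The unrepairable one is the exponent. Your quantity to control is $\|\mathcal L_f\phi_i-\mathcal L_{f_0}\phi_i\|_{L^2}$, which lives on the \emph{amplifying} side of the spectrum. Under $\|f\|_{C^2}\le U$ only, $\mathcal L_f\phi_i=f\Delta\phi_i+\nabla f\cdot\nabla\phi_i\in H^1$ but not $H^2$ (that would require $D^3 f$), i.e., $\phi_i\in\bar H^3_f$ but not $\bar H^4_f$ (Proposition \ref{sobald}A requires $f\in C^{k-1}$ for $\bar H^k_f\simeq H^k$). Hence the tail estimate is
\begin{equation*}
\|\mathcal L_f(I-E_\sigma^f)\phi_i\|_{L^2}^2=\sum_{\lambda_j>L}\lambda_j^2\langle\phi_i,e_j\rangle^2\le L^{-1}\|\phi_i\|^2_{\bar H^3_f}\lesssim L^{-1},
\end{equation*}
i.e., $\beta=1/2$, not more. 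Balancing $e^{cL}\varepsilon$ against $L^{-1/2}$ with $L\asymp\log(1/\varepsilon)$ gives $\|f-f_0\|_{L^2}\lesssim(\log(1/\varepsilon))^{-1/2}$, which is strictly \emph{weaker} than the stated $(\log(1/\varepsilon))^{-2/3}$, so your argument does not prove (\ref{logstab0}) as written. Your claim that the $C^2$ regularity ``dictates'' the exponent $2/3$ is therefore incorrect; in your scheme it dictates $1/2$.

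The reason the paper's route is more efficient, despite the interpolation penalty, is that it works on the \emph{smoothing} side of the spectrum. The truncation error for $u_{f,\phi}=\mathcal L_f^{-1}\phi$ carries a factor $\lambda_j^{-1}$, and since $\phi\in\bar H^2_f$, one gets $\sum_{j>J}\lambda_j^{-2}\langle\phi,e_j\rangle^2\lesssim\lambda_J^{-4}\|\phi\|^2_{\bar H^2_f}\lesssim J^{-8/d}$, i.e., an $L^2$-truncation error $\lesssim J^{-4/d}\sim L^{-2}$ at the same spectral cutoff. The interpolation $\|\cdot\|_{H^2}\lesssim\|\cdot\|_{L^2}^{1/3}\|\cdot\|_{H^3}^{2/3}$ then converts $L^{-2}$ into $L^{-2/3}$, which still beats your $L^{-1/2}$. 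If you want to keep your cleaner Step 1 and still match the theorem's exponent, you would need to route the low/high-frequency split through the inverse operator rather than $\mathcal L_f$ itself (or assume more regularity of $f$), since it is precisely the $\lambda_j^{+1}$ weight in $\|\mathcal L_f(I-E_\sigma)\phi_i\|$ that caps $\beta$ at $1/2$ under $C^2$ regularity. The injectivity assertion of the theorem does, of course, follow from your argument once Step 2's contour gap is established.
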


The proof consists of a combination of the functional calculus identity $$P_{t,f} = \exp\{t\mathcal L_f\} = \exp\{t/\mathcal L_f^{-1}\}, ~t>0,$$ with injectivity estimates for the non-linear map $f \mapsto \mathcal L_{f}^{-1}(\phi)$ for appropriate $\phi$ (which have been developed earlier in related contexts, see, e.g., \cite{NvdGW20, N23} and therein).

It is of interest to improve the logarithmic  modulus of continuity in (\ref{logstab0}). We now show that at least in some regions of the parameter space of $f_0$'s this is possible. The proof strategy is substantially different from Theorem \ref{logstabthm} and instead of functional calculus relies on a spectral `pseudo-linearisation' identity for $P_{t,f}-P_{t,f_0}$ obtained from perturbation theory for parabolic PDE. This identity simplifies when testing against eigenfunctions of $P_{t,f_0}$, and allows to identify $f_0$ if a certain transport operator (related to the stability estimates for $\mathcal L_{f_0}^{-1}$) is injective. Stability of this transport operator can be reduced to a hypothesis on the eigenfunctions of $P_{t,f_0}$, which in turn can be tackled with techniques from spectral geometry.

To this end, define the first block of eigenfunctions $e_l \in H^2(\mathcal O)$ of  $-\mathcal L_{f_0}$ from (\ref{generator}) as
\begin{equation}\label{eigenblock}
E_{1,f_0, \iota}=\sum_{l: \lambda_{l, f_0} = \lambda_{1,f_0}} e_{l,f_0}\iota_l,
\end{equation}
where $\lambda_{1, f_0}$ is the first (non-zero) eigenvalue. Note that the last sum is necessarily finite and $\iota=(\iota_l)$ is any vector of scalars. The following theorem shows that under certain assumptions on $E_{1,f_0,\iota}$ to be discussed, a Lipschitz (or H\"older) stability estimate holds true.

\begin{theorem}\label{stabop}
In addition to the hypotheses of Theorem \ref{logstabthm}, assume also $\|f\|_{H^s}+\|f_0\|_{H^{s}} \le U$ for some $s>d$ and that
\begin{equation}\label{sunnyside}
\inf_{x \in \mathcal O_0}\frac{1}{2}\Delta E_{1, f_0, \iota}(x) + \mu |\nabla E_{1,f_0, \iota}(x)|_{\R^d}^2 \ge c_0>0,
\end{equation}
for some $\mu, c_0>0$ and some vector $\iota$. Then we have
\begin{equation}\label{supi}
\|f-f_0\|_{L^2} \leq \bar C \|P_{D, f}-P_{D,f_0}\|_{H^2 \to H^2}
\end{equation}
for a constant $\bar C=\bar C(U, D, \mu, c_0, \iota, \mathcal O_0, \mathcal O, f_{min}, d)$.
\end{theorem}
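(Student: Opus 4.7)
The plan is to follow the pseudo-linearisation route indicated in the paragraph preceding the statement, which replaces the functional-calculus identity of Theorem \ref{logstabthm} by a Duhamel expansion whose first-order term directly exposes $h := f - f_0$. Differentiating $t \mapsto P_{D-t,f_0}P_{t,f}$ and using $\mathcal L_f - \mathcal L_{f_0} = \nabla\cdot(h\,\nabla\,\cdot\,)$ yields the operator identity
$$
P_{D,f} - P_{D,f_0} \;=\; \int_0^D P_{D-t,f_0}\,\nabla\!\cdot\!\big(h\,\nabla\,\cdot\,\big)\,P_{t,f}\,dt,
$$
in which integrations by parts against test functions produce no boundary contributions because $h$ vanishes on $\mathcal O \setminus \mathcal O_0$.

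The second step is to apply this identity to the first-block eigenfunction $E := E_{1,f_0,\iota}$ (for which $P_{s,f_0}E = e^{-s\lambda_{1,f_0}}E$) and take the $L^2$-pairing with a test $\psi$. Self-adjointness and the eigenvalue relation allow one to compute the leading-order contribution as a linear functional of $h$ of the form
$$
\langle (P_{D,f}-P_{D,f_0})E,\psi\rangle_{L^2} \;=\; -\!\int_{\mathcal O}\! h\,\mathcal T_\psi(E)\,dx \;+\; R,\qquad R = O(\|h\|_{L^2}),
$$
where $\mathcal T_\psi(E)$ is a differential expression in $E$ (essentially $\int_0^D e^{-t\lambda_{1,f_0}}\,\nabla E \cdot \nabla P_{D-t,f_0}\psi\,dt$) and $R$ collects higher-order perturbations of $P_{t,f}E$ around $e^{-t\lambda_{1,f_0}}E$. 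By a judicious choice of $\psi$ (for instance a non-linear scalar function such as $e^{2\mu E}$, possibly combined with further tests expanded in the $\mathcal L_{f_0}$-eigenbasis), one engineers $\mathcal T_\psi(E)$ to coincide, up to a harmless positive factor, with $Q := \tfrac12 \Delta E + \mu|\nabla E|^2$. The resulting map $h \mapsto hQ$ is the \emph{transport operator} of the sketch; condition (\ref{sunnyside}) makes it strictly positive on $\mathcal O_0$.

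Varying an auxiliary multiplier $\eta$ in the test (replacing $\psi$ by $\eta\psi$) and exploiting density of such families promotes the scalar identity to the $L^2$-bound $\|hQ\|_{L^2} \lesssim \|P_{D,f}-P_{D,f_0}\|_{H^2\to H^2} + \|R\|_{L^2}$. The remainder $R$ is quadratic in $h$ and is absorbed by interpolation between the Sobolev control $\|f\|_{H^s}+\|f_0\|_{H^s}\leq U$ (with $s>d$, yielding $H^s \hookrightarrow C^2$ to bound $E$ and its derivatives uniformly) and the $L^2$-norm of $h$. Combined with $Q \geq c_0 > 0$ on $\mathcal O_0 \supset \mathrm{supp}(h)$ from (\ref{sunnyside}), this yields $\|h\|_{L^2} \leq c_0^{-1}\|hQ\|_{L^2}$ and hence (\ref{supi}).

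The main obstacle I anticipate is the calibration in Step 2: exhibiting a test $\psi$ that, after insertion into the Duhamel identity and integration by parts, pairs $h$ with the pure multiplicative weight $Q$ rather than with a gradient inner product $\nabla E \cdot \nabla \Psi$ whose inversion would be ill-posed and would degrade the stability. A secondary difficulty is tightening the quadratic-in-$h$ remainder sufficiently to preserve the Lipschitz (rather than H\"older) modulus in (\ref{supi}); the natural route is to invoke Theorem \ref{logstabthm} qualitatively to restrict attention to a small $L^2$-neighbourhood of $f_0$ in which the remainder is absorbed into the left-hand side.
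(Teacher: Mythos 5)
Your strategy is aimed in the right direction --- pseudo-linearisation tested against the first eigenblock $E := E_{1,f_0,\iota}$, followed by the transport stability estimate --- but there is a genuine gap in the way you set up the Duhamel identity, and the fix is not cosmetic.

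You differentiate $t \mapsto P_{D-t,f_0}P_{t,f}$, which gives
\begin{equation*}
P_{D,f} - P_{D,f_0} = \int_0^D P_{D-t,f_0}\,\nabla\!\cdot\!\bigl(h\,\nabla\,\cdot\,\bigr)\,P_{t,f}\,dt .
\end{equation*}
Applied to $E$, the integrand becomes $\nabla\!\cdot\!\bigl(h\,\nabla P_{t,f}E\bigr)$, and since $E$ is an eigenfunction of $\mathcal L_{f_0}$ --- not of $\mathcal L_f$ --- the quantity $P_{t,f}E$ is \emph{not} $e^{-t\lambda_{1,f_0}}E$. This forced mismatch is precisely where your remainder $R$ comes from. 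Moreover, your description of $R$ is internally inconsistent: you declare $R = O(\|h\|_{L^2})$, which is the same order as the term you need to lower bound, so no absorption is possible; elsewhere you call it ``quadratic in $h$,'' which is a different claim entirely, and one whose verification you defer to a circular appeal to the logarithmic stability of Theorem \ref{logstabthm}. The paper avoids all of this by differentiating $t \mapsto P_{D-t,f}P_{t,f_0}$ instead. With this ordering, when the identity is applied to $E$ the inner propagator acts as $P_{t,f_0}E = e^{-t\lambda_{1,f_0}}E$ \emph{exactly}, and one obtains the remainder-free identity (\ref{repspec}): $P_{D,f}E - P_{D,f_0}E = \sum_k b_{k,1}\langle G, e_{k,f}\rangle e_{k,f}$ with $G = \nabla\!\cdot(h\nabla E)$ and $b_{k,1} = \int_0^D e^{-s\lambda_{1,f_0}}e^{-(D-s)\lambda_{k,f}}ds$. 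There is no second-order term to absorb, no preliminary smallness restriction, and the Lipschitz constant is clean.

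Two further points. First, your plan to ``engineer'' the multiplicative weight $Q = \tfrac12\Delta E + \mu|\nabla E|^2$ by choosing a fixed nonlinear test $\psi = e^{2\mu E}$ misreads the mechanism of the transport Lemma \ref{transport}: its proof tests against $v = e^{-\mu E}h$, a function that \emph{depends on $h$}, and exploits the divergence theorem applied to $v^2$; this cannot be recast as pairing the operator identity with a pre-chosen $\psi$, and trying to do so confuses $\|\nabla\!\cdot(h\nabla E)\|_{L^2}$ (a differential functional of $h$, the actual controlled object) with the multiplicative bound $\|hQ\|_{L^2}$ that you write at the end. Second, your argument never explains \emph{why} the right-hand side of (\ref{supi}) carries the $H^2\!\to\! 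H^2$ operator norm. In the paper's proof this is essential: the factor $\lambda_{k,f}^2$ in the $\bar H^2_f$-norm exactly compensates the lower bound $|b_{k,1}| \gtrsim \lambda_{k,f}^{-1}$, and Parseval then delivers $\|P_{D,f}-P_{D,f_0}\|_{H^2\to H^2} \gtrsim \|G\|_{L^2}$, after which Lemma \ref{transport} supplies $\|G\|_{L^2}\gtrsim\|h\|_{L^2}$. You should redo the argument with the correct Duhamel ordering, then use Parseval in the $\mathcal L_f$ eigenbasis to extract $\|G\|_{L^2}$, and only then invoke the transport lemma as a separate black box.
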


By standard interpolation inequalities for Sobolev norms (p.44 in \cite{LM72}) and Proposition \ref{transreg} with some $s \ge 2, k=3$, the bound (\ref{supi}) directly implies a H\"older stability estimate 
\begin{equation}\label{supiip}
\|f-f_0\|_{L^2} \lesssim \|P_{D, f}-P_{D,f_0}\|_{L^2 \to L^2}^{\gamma}
\end{equation}
for $\gamma=1/3$. Whenever $f, f_0 \in H^s$ we can let $\gamma=\gamma(s) \to 1$ as $s \to \infty$.

As we can choose $\iota$ we only need to find \textit{one} linear combination of eigenfunctions in the eigenspace for $\lambda_{1, f_0}$ that satisfies the hypothesis (\ref{sunnyside}). As multiplicities of eigenvalues reflect symmetries of $\mathcal L_{f_0}$ on $\mathcal O$, one could regard the added flexibility as a `blessing of symmetry'.

\begin{remark}[Stability for the `backward heat operator'] \normalfont \label{tdstab} 
We can write $P_{t,f}=\kappa_{t,D} (P_{D,f})$ for the operator functional $\kappa_{t,D} = \exp\{(t/D)\log (\cdot)\}$ on the spectrum $(0,1)$ of $P_{D,f}$, see the identity (\ref{trans}). For $t>D$ the map $\kappa_{t,D}$ is $C^{1+\eta}((0,1))$ for some $\eta=\eta(t,D)>0$ and one deduces from operator-norm Lipschitz properties (e.g., Lemma 3 in \cite{Kol21}) that then $\|P_{t,f}-P_{t,f_0}\|_{L^2 \to L^2} \lesssim \|P_{D,f}-P_{D,f_0}\|_{L^2 \to L^2}$. This is intuitive as the forward heat map is a smooth integral operator (the Chapman-Kolmogorov equations). In contrast in the case $t<D$, the operator functional $\kappa_{t,D}$ does not have a bounded Lipschitz constant on the spectrum. The last two theorems combined with Theorem \ref{lfsball} below (for $D=t$ there, and via the continuous imbedding $L^2 \to H^{-1}$) imply the following stability estimates for the dependence of the `backward heat operator' on $f$: Under the hypotheses of Theorem \ref{logstabthm} and assuming $\|f\|_{H^s}+\|f_0\|_{H^{s}} \le U$ for $s>d$, we can bound the $L^2(\mathcal O)$-Hilbert Schmidt norms as
\begin{equation}\label{backlog}
\|P_{t,f}-P_{t,f_0}\|_{HS} \leq C \bar {\omega} (\|P_{D,f}-P_{D,f_0}\|_{L^2 \to L^2}),~~\text{any }0<t<D,
\end{equation}
where the modulus of continuity $\bar {\omega}$ can be taken to be $\bar {\omega}(z) = \log(1/z)^{-2/3}$, and with constant $C$ now depending also on $s,t$. In light of the exponential growth of the Lipschitz constant of $\kappa_{t,D}, t<D,$ in the tail of the spectrum of $P_{D,f}$, one may think that such a logarithmic modulus of continuity is necessary. However, under the  hypothesis (\ref{sunnyside}) we can obtain a stronger H\"older modulus from our techniques. For the proof, we combine Theorem \ref{stabop} (in fact (\ref{supiip})) and Theorem \ref{lfsball} below. 
\begin{theorem}
Under the hypotheses of Theorem \ref{stabop} we have 
\begin{equation}\label{backlip}
\|P_{t,f}-P_{t,f_0}\|_{HS} \leq C' \|P_{D,f}-P_{D,f_0}\|_{L^2 \to L^2}^\gamma,~~\text{any }0<t<D,
\end{equation} where $0<\gamma<1$ is as in (\ref{supiip}) and where $C'=C(D,t,s,U, \mu, c_0, \iota, \mathcal O, \mathcal O_0, f_{min},d)$.
\end{theorem}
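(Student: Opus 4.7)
The plan is to chain the two stability estimates already available in the paper, so essentially no new work is required beyond carefully tracking constants. Concretely, Theorem \ref{lfsball} provides a \emph{forward} stability estimate at any fixed positive observation distance, controlling a Hilbert--Schmidt discrepancy between transition operators by a negative Sobolev norm of the diffusivity difference. The Hölder estimate (\ref{supiip}), which is a consequence of Theorem \ref{stabop} together with Proposition \ref{transreg} and standard Sobolev interpolation, provides the matching \emph{inverse} stability estimate in $L^2$.

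First, I would apply Theorem \ref{lfsball} with sampling distance $t$ played in the role of $D$; under the standing assumptions $\|f\|_{H^s} + \|f_0\|_{H^s} \le U$ for some $s > d$, $f, f_0 \ge f_{min}$, and $f = f_0$ outside $\mathcal O_0 \Subset \mathcal O$, this yields
$$\|P_{t,f} - P_{t,f_0}\|_{HS} \;\le\; C_1 \, \|f - f_0\|_{H^{-1}(\mathcal O)},$$
with $C_1 = C_1(t, s, U, f_{min}, d, \mathcal O)$. Since $\mathcal O$ is bounded, the continuous embedding $L^2(\mathcal O) \hookrightarrow H^{-1}(\mathcal O)$ then gives
$$\|P_{t,f} - P_{t,f_0}\|_{HS} \;\le\; C_2 \, \|f - f_0\|_{L^2(\mathcal O)}.$$

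Second, with the additional hypothesis (\ref{sunnyside}) of Theorem \ref{stabop} in force, the Hölder bound (\ref{supiip}) supplies
$$\|f - f_0\|_{L^2(\mathcal O)} \;\le\; C_3 \, \|P_{D,f} - P_{D,f_0}\|_{L^2 \to L^2}^{\gamma},$$
for the exponent $\gamma \in (0,1)$ already identified after (\ref{supi}). Substituting this into the preceding display yields the claimed bound with $C' = C_2 C_3$, and inspection of the dependencies produces exactly the parameters $(D, t, s, U, \mu, c_0, \iota, \mathcal O, \mathcal O_0, f_{min}, d)$ asserted in the statement.

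There is no genuine obstacle, since both ingredient estimates have already been established elsewhere in the paper. The only delicate point worth flagging is that the constant $C_1$ from Theorem \ref{lfsball} should be expected to deteriorate as $t \downarrow 0$---this reflects the ill-posedness of inverting the small-time heat semigroup and is precisely what accounts for the explicit $t$-dependence of $C'$. For any fixed $t \in (0, D)$ this dependence is harmless, and in particular the result recovers a uniform constant on compact subintervals of $(0, D)$.
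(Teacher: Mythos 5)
Your proof is correct and matches the paper's intended argument exactly: the paper states that (\ref{backlip}) follows by combining Theorem~\ref{lfsball} (applied at observation distance $t$, then upgrading $(H^1_c)^*$ to $L^2$ via the dual embedding chain) with the Hölder stability estimate (\ref{supiip}). Your remark about the deterioration of the constant as $t \downarrow 0$ is a correct observation about the source of the $t$-dependence of $C'$.
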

\end{remark}

\begin{remark}[The one-dimensional case] \label{onedee} \normalfont
In the one-dimensional setting $d=1$, Lemma 6.1 and Proposition 6.5 in \cite{GHR04} prove simplicity of $\lambda_{1,f_0}$ and the strict monotonicity in any closed subinterval $\mathcal O_0$ of $\mathcal O$ of the corresponding eigenfunction $e_{1,f_0}$ (for any $f_0 \in H^s, s>1$). This entails that the derivative $e_{1,f_0}$ cannot vanish on $\mathcal O_0$ and verifies the key hypothesis (\ref{sunnyside}) of Theorem \ref{stabop} for some $c_0>0$ and all $\mu$ large enough depending on $\|e''_{1,f_0}\|_{\infty}$ (finite if $s>2$). 
\end{remark}

We next discuss an approach to verify (\ref{sunnyside}) also in multi-dimensions $d>1$ based on the `hot spots' conjecture from spectral geometry. Ways to obtain `H\"older' stability estimates that involve eigenfunctions for multiple distinct eigenvalues (rather than just the first), can be thought of too and will be investigated in future research.

\subsubsection{Reflected diffusion and `hot spots'}

While (\ref{sunnyside}) is satisfied in dimension $d=1$ (Remark \ref{onedee}), this is less clear in higher dimensions. Indeed, if the first eigenfunction $e_{1,f}$ has a critical point in $\mathcal O_0$ with non-positive Laplacian (e.g., consider $e_{1,f}$ near $(x_1,x_2)=0$ of the form $-x_1^2 \pm x_2^2$), the condition (\ref{sunnyside}) does not hold. The hope is that eigenfunctions have special properties that exclude such situations, at least in regions $\mathcal O_0 \subset \mathcal O$ one can identify. 

Let us start with some simple examples where the condition is satisfied when $d \ge 2$. For the Laplacian ($f=const$) on the unit cube, the first eigenfunctions of $\mathcal L_1$ corresponding to $\lambda_{1,1}$ are cosines in one of the axial variables, constant otherwise, and $|\nabla e_{1,1}|_{\R^d}$ vanishes only at the respective corners of $\partial \mathcal O$. Moreover $|\Delta e_{1,1}|$ is bounded on any compact $\mathcal O_0 \subset \mathcal O$ and so we can verify (\ref{sunnyside}) for $\mu$ large enough, appropriate $\iota$, and such $\mathcal O_0$. The argument just given extends to cylindrical domains with base $\mathcal O_1$ equal to a convex domain in $\mathbb R^{d-1}$:
\begin{proposition}\label{cyllap}
Consider a cylinder $\mathcal O=\mathcal O_1 \times (0,w)$ of height $w>0$ and with convex base $\mathcal O_1 \subset \R^d$ of diameter $diam(\mathcal O_1) \le w$. Then (\ref{sunnyside}) holds for $f_0=1$, any compact $\mathcal O_0 \subset \mathcal O$, some $\iota$, and constants $\mu, c_0$ depending on $\mathcal O_0$.
\end{proposition}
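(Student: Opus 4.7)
The plan is to exploit the product structure of the Neumann Laplacian on the cylinder together with the Payne--Weinberger inequality to single out an explicit first eigenfunction depending only on the axial variable. Write $x=(x',x_d)$ with $x'\in\mathcal O_1\subset\R^{d-1}$ and $x_d\in(0,w)$.

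First I would use separation of variables. The Neumann Laplacian on $\mathcal O=\mathcal O_1\times (0,w)$ is the (commuting) sum of $-\Delta_{\mathcal O_1}$ acting in $x'$ and $-\partial_{x_d}^2$ on $(0,w)$, each with Neumann boundary conditions, and its spectrum therefore equals $\{\mu_k(\mathcal O_1)+(j\pi/w)^2: j,k\ge 0\}$ with tensor-product eigenfunctions $\phi_k(x')\cos(j\pi x_d/w)$, where $\phi_k$ runs through an orthonormal basis of Neumann eigenfunctions on $\mathcal O_1$ and $\mu_k(\mathcal O_1)$ are the corresponding eigenvalues.

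Next I would invoke the Payne--Weinberger inequality: for a bounded convex $\mathcal O_1\subset\R^{d-1}$ one has $\mu_1(\mathcal O_1)\ge \pi^2/\mathrm{diam}(\mathcal O_1)^2\ge (\pi/w)^2$. Combined with the previous step this identifies the first non-zero eigenvalue $\lambda_{1,1}=(\pi/w)^2$ of $-\Delta$ on $\mathcal O$ and ensures that the purely axial function $E(x):=\cos(\pi x_d/w)$ lies in the corresponding eigenspace. I would then choose $\iota$ so that $E_{1,1,\iota}=E$. A direct calculation gives
$$\nabla E_{1,1,\iota}(x)=\bigl(0,\ldots,0,-\tfrac{\pi}{w}\sin(\pi x_d/w)\bigr),\qquad \Delta E_{1,1,\iota}(x)=-(\pi/w)^2\cos(\pi x_d/w),$$
and hence
$$\tfrac{1}{2}\Delta E_{1,1,\iota}(x)+\mu\,|\nabla E_{1,1,\iota}(x)|_{\R^d}^{2}=(\pi/w)^2\Bigl[\mu\sin^2(\pi x_d/w)-\tfrac{1}{2}\cos(\pi x_d/w)\Bigr].$$

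Finally, compactness of $\mathcal O_0\subset\mathcal O$ yields some $\eta=\eta(\mathcal O_0)>0$ with $x_d\in[\eta,w-\eta]$ throughout $\mathcal O_0$, so that $\sin^2(\pi x_d/w)\ge\sin^2(\pi\eta/w)=:\delta>0$. Since $|\cos(\pi x_d/w)|\le 1$, choosing $\mu$ large enough (depending only on $\delta$, and hence on $\mathcal O_0$ and $w$) makes the bracketed expression uniformly positive on $\mathcal O_0$, establishing (\ref{sunnyside}) with some $c_0=c_0(\mathcal O_0,w)>0$. The main obstacle is the spectral input from Payne--Weinberger: without it one cannot exclude $\mu_1(\mathcal O_1)<(\pi/w)^2$, in which case the first eigenspace on $\mathcal O$ would consist of functions with non-trivial $x'$-dependence, and for a general linear combination of such eigenfunctions $|\nabla E_{1,1,\iota}|^2$ could vanish at interior critical points inside $\mathcal O_0$, causing (\ref{sunnyside}) to fail.
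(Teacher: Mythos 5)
Your proof is correct and follows essentially the same route as the paper: separation of variables for the Neumann Laplacian on the cylinder, the Payne--Weinberger inequality $\mu_1(\mathcal O_1)\ge \pi^2/\mathrm{diam}(\mathcal O_1)^2$ to place the axial cosine $\cos(\pi x_d/w)$ in the eigenspace of the first nonzero eigenvalue, selection of that eigenfunction via $\iota$, and a direct computation of $\nabla E$ and $\Delta E$ on a compact $\mathcal O_0$ to make the left side of (\ref{sunnyside}) positive by taking $\mu$ large. The paper packages the same ingredients through an intermediate Proposition (\ref{cyllapsimp}) that first treats the strictly simple case $p(\mathcal O_1)\le w^2/2\pi^2$ (yielding a quantitative spectral gap and the gradient lower bound (\ref{gradual})) and then extends to the borderline $\mathrm{diam}(\mathcal O_1)=w$ by noting that $\iota$ can still select the axial eigenfunction, whereas you handle both cases in one stroke; your explicit formula for $\tfrac12\Delta E+\mu|\nabla E|^2$ also makes the final step slightly more transparent than the paper's appeal to boundedness of $\sup_{\mathcal O_0}|\Delta e_{1,1}|$.
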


Our proof shows that when $diam(\mathcal O_1)<w$, the first eigenvalue is simple and its eigenfunction satisfies (\ref{sunnyside}). When $w=diam(\mathcal O_1)$, the eigenspace of $\lambda_{1,1}$ is possibly multi-dimensional, but there always exists one eigenfunction in that eigenspace that satisfies (\ref{sunnyside}).

\vspace{2pt plus 1pt minus 1pt}

The proof of the last proposition is not difficult (see Section \ref{cylinder}) -- it draws inspiration from \cite{K85} and provides one of the few elementary examples for the validity of Rauch's \textit{hot spots conjecture}  \cite{BB99, B06} which is concerned precisely with domains $\mathcal O$ for which the gradient $\nabla e_{l,1}$ of any eigenfunction of $\Delta=\mathcal L_1$ corresponding to $\lambda_{1,1}$ has all its zeros at the boundary $\partial \mathcal O$. As the eigenfunctions are smooth in the interior of $\mathcal O$ this conjecture implies (\ref{sunnyside}) for $f=const$ and any compact $\mathcal O_0 \subset \mathcal O$ as we can then choose $\mu$ large enough depending on $\mathcal O_0, \sup_{x \in \mathcal O_0}|\Delta e_{1,1}(x)|$. The hot spots conjecture is believed to be true whenever $\mathcal O$ is convex but with the exception of cylinders has been proved only in special 2-dimensional cases so far, see \cite{JN00, AB04, JM20, S20} and references therein for positive results and \cite{BW99} who show that the conjecture may fail in non-convex domains. Next to convexity, symmetry properties of the domain $\mathcal O$ play a key role in these proofs -- in the context of Proposition \ref{cyllap} the central axis of symmetry of the cylinder `dominates the spectrum' when the base $\mathcal O_1$ is small enough, providing what is necessary to verify the conjecture in this case. The case $d=1$ from Remark \ref{onedee} can in this sense be regarded as a degenerately symmetric special case.

\vspace{2pt plus 1pt minus 1pt}

In this article we consider smooth domains but the preceding `cylinder' is not smooth near the boundary of its base. But we can `round the corners' of the cylinder without distorting the relevant spectral properties of $\mathcal L_{1}=\Delta$. For example consider $d \ge 2$ and a hyperrectangle $\mathcal O_{(w)}=(0,1)^{d-1} \times (0,w)$ for $w$ to be chosen, and define 
\begin{equation} \label{om}
\mathcal O_{m,w} = \{x \in \R^d: |x-\mathcal O_{(w)}|_{\R^d}<1/m\}, ~m \in \mathbb N.
\end{equation}
Then the $\mathcal O_{m,w}$ are bounded convex domains that have \textit{smooth} boundaries $\partial \mathcal O_{m,w}$ for all $m$, and we will show that the conclusion of Proposition \ref{cyllap} remains valid for $m$ large enough. Moreover, to lend more credence to (\ref{sunnyside}) for $f_0$ different from constant $=1$, we can extend the result to  $\mathcal L_{f_0}$ for $f_0$ in a $L^\infty$-neighbourhood of the constant function. This gives meaningful infinite-dimensional models for which the H\"older stability estimates from the previous subsection apply, and for which  `fast convergence rates' will be obtained in the next section. Incidentally they are also used to prove the lower bound in Theorem \ref{opmxlb}. For simplicity we only consider the case of \textit{simple} (first) eigenvalues in the following result.

\begin{theorem}\label{cylinderth}

A) Consider domains $\mathcal O_{m,w}$ for $w\ge 2$. Then we can choose $m$ large enough such that the Laplacian $-\Delta=-\mathcal L_1$ on $\mathcal O_{m,w}$ has a simple eigenvalue $0<\lambda_{1, 1, m}<\lambda_{2, 1, m}$ and the corresponding eigenfunction $e_{1,1,m}$ satisfies (\ref{sunnyside}) for any compact subset $\mathcal O_0$ of $\mathcal O_{(w)}$, with constant $\mu, c_0$ depending on $\mathcal O_0, d, w, m$. 

\smallskip

B) The conclusions in A) remain valid if we replace  $\mathcal L_1$ by $\mathcal L_{f_0}$ for any $f_0$ that satisfies $\|f_0\|_{H^s(\mathcal O_{m,w})} \le U, s>d,$ as well as $\|f_0-1\|_\infty<\kappa$ for some $\kappa$ small enough, with constants now depending also on $\kappa, U$.
\end{theorem}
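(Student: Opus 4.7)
The plan is to establish (\ref{sunnyside}) first on the exact cylinder $\mathcal O_{(w)}=(0,1)^{d-1}\times(0,w)$ via an explicit spectral calculation (the spirit of Proposition \ref{cyllap}), and then to transfer it twice: from $\mathcal O_{(w)}$ to the smoothed $\mathcal O_{m,w}$ by Neumann spectral stability under domain perturbation (Part A), and from $\mathcal L_1=\Delta$ to $\mathcal L_{f_0}$ by self-adjoint coefficient-perturbation theory (Part B). Since the first positive eigenvalue will turn out simple, the block in (\ref{eigenblock}) reduces to a single eigenfunction times a scalar $\iota_1$, whose sign we are free to choose.

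\textbf{Part A.} Separation of variables on $\mathcal O_{(w)}$ identifies the Neumann spectrum of $-\Delta$ as $\{\pi^2(k_1^2+\cdots+k_{d-1}^2)+\pi^2 k_d^2/w^2:k\in\N_0^d\}$. For $w\ge 2>1$ the smallest positive eigenvalue is $(\pi/w)^2<\pi^2$, is simple, carries the eigenfunction $c\cos(\pi x_d/w)$, and is separated from the next eigenvalue by a gap of at least $3\pi^2/w^2$. The domains $\mathcal O_{m,w}$ are outer $1/m$-fattenings of $\mathcal O_{(w)}$ with $|\mathcal O_{m,w}\setminus\mathcal O_{(w)}|\to 0$, and classical spectral stability of the Neumann Laplacian under such monotone perturbations of convex domains (via Mosco-type convergence of the Dirichlet forms extended to $H^1$ of the ambient space) gives $\lambda_{j,1,m}\to\lambda_{j,1}$ with strong $H^1$-convergence of $L^2$-normalised eigenfunctions. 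Hence for $m$ large $\lambda_{1,1,m}$ is simple with uniform gap to $\lambda_{2,1,m}$, and $e_{1,1,m}\to c\cos(\pi x_d/w)$ in $H^1$; interior elliptic regularity for $-\Delta$ then upgrades the convergence to $C^k_{\mathrm{loc}}(\mathcal O_{(w)})$ for every $k$. On a compact $\mathcal O_0\subset\mathcal O_{(w)}$ we have $x_d\in[a,w-a]$ for some $a>0$, so the limit satisfies $|\nabla(c\cos(\pi x_d/w))|^2\ge(c\pi/w)^2\sin^2(\pi a/w)>0$ and $|\Delta(c\cos(\pi x_d/w))|\le(c\pi/w)^2$; choosing $\iota_1\in\{\pm 1\}$ with the axial direction oriented appropriately and $\mu$ larger than the ratio of these bounds yields (\ref{sunnyside}) in the limit, and $C^2_{\mathrm{loc}}$-convergence then transfers the inequality (with, say, $c_0/2$) to $e_{1,1,m}$ for all $m$ large.

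\textbf{Part B.} Fix such an $m$. The operators $\mathcal L_{f_0}$ and $\Delta$ on $L^2(\mathcal O_{m,w})$ with Neumann boundary have symmetric quadratic forms differing by $\int(f_0-1)|\nabla\phi|^2\,dx$, which is bounded by $\|f_0-1\|_\infty\le\kappa$ times the Dirichlet form. Self-adjoint perturbation theory for isolated simple eigenvalues (resolvent-expansion around the contour encircling only $\lambda_{1,1,m}$) gives, for $\kappa$ small compared to the spectral gap from Part A, that $\lambda_{1,f_0,m}$ persists simple with $|\lambda_{1,f_0,m}-\lambda_{1,1,m}|\lesssim\kappa$ and $\|e_{1,f_0,m}-e_{1,1,m}\|_{H^1(\mathcal O_{m,w})}\lesssim\kappa$. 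To upgrade to $C^2$-closeness on $\mathcal O_0$, note that $s>d$ implies $s>1+d/2$ for $d\ge 2$, hence $f_0\in C^1(\mathcal O_{m,w})$ by Sobolev embedding; iterating interior elliptic regularity on the eigenequation $\nabla\cdot(f_0\nabla e)=-\lambda e$ yields $e_{1,f_0,m}\in H^{s+1}_{\mathrm{loc}}\subset C^2_{\mathrm{loc}}$ with bounds depending only on $U,f_{min}$. A resolvent-identity computation then converts the $H^1$-rate into a $C^2(\mathcal O_0)$-rate uniformly in small $\kappa$, so (\ref{sunnyside}) transfers to $e_{1,f_0,m}$ with (say) $c_0/4$ and the same $\mu$.

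\textbf{Main obstacle.} The most delicate step is the Neumann spectral stability in Part A across the corner-smoothing $\mathcal O_{m,w}\to\mathcal O_{(w)}$, since Neumann eigenvalues are notoriously unstable under generic domain perturbations (dumbbell phenomena). Convexity of both $\mathcal O_{m,w}$ and the limit, together with the monotone outer nesting and vanishing volume excess, rules these pathologies out and makes the Mosco framework directly applicable. The secondary technical point—uniform $C^2_{\mathrm{loc}}$ eigenfunction estimates for $\mathcal L_{f_0}$—rests squarely on the Sobolev hypothesis $s>d$.
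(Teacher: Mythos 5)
Your proposal follows the same three-stage outline as the paper: explicit cosine eigenfunction on the exact cylinder $(0,1)^{d-1}\times(0,w)$, domain perturbation to the smoothed $\mathcal O_{m,w}$, coefficient perturbation from $\Delta$ to $\mathcal L_{f_0}$, and a $C^2$ upgrade on compacts by interior elliptic regularity so that (\ref{sunnyside}) survives each limit. The difference lies in how the two perturbation steps are carried out, and there is one spot in Part~A that needs repair. You invoke Mosco convergence of the Dirichlet forms together with a ``classical spectral stability'' result for the domain step, and a relative form-bound with contour integration for the coefficient step. The paper instead proves the needed \emph{norm-resolvent} convergences explicitly --- an unnumbered lemma in Step~II showing $\|T_{\mathcal O_m}-T_{\mathcal O}\|_{L^2(\mathcal O_1)\to L^2(\mathcal O_1)}\to 0$ via the uniform Grisvard $H^2$ estimate on convex domains and compactness of $L^2\hookrightarrow (H^1)^*$, and Lemma~\ref{lappert} giving $\|\mathcal L_f^{-1}-\mathcal L_1^{-1}\|_{L^2_0\to L^2_0}\lesssim\|f-1\|_\infty$ --- and then feeds both into a single perturbation principle, Proposition~\ref{opperture}.

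The point to flag in your Part~A is that Mosco convergence of the forms by itself yields only \emph{strong} resolvent convergence, and this does not suffice to conclude persistence and convergence of an isolated simple eigenvalue: upper semicontinuity of the spectrum can fail. What is needed is norm-resolvent convergence (equivalently, a collectively compact family of resolvents), and this is exactly what the uniform, domain-independent $H^2$ estimate (\ref{keyregbd}) supplies in the paper's argument. Your appeal to ``monotone outer nesting and vanishing volume excess'' names the right geometric ingredients, and a correct citation to a quantitative Neumann domain-perturbation theorem on convex domains would also close the gap, but as written the parenthetical ``via Mosco-type convergence'' does not establish the eigenvalue conclusion you draw from it. Your Part~B is sound: the form-bound-plus-contour route is a valid alternative to Lemma~\ref{lappert}, and your $C^2(\mathcal O_0)$ upgrade matches the paper's Step~III (interior regularity plus interpolation) in substance.
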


\subsection{Bayesian inference in the diffusion model}\label{baydiff}

While we have shown injectivity of the non-linear map $f \mapsto P_{D,f}$, there is no obvious inversion formula (unless $d=1$), and so the estimate from Theorem \ref{opmx} does not obviously translate into one for $f$. The paradigm of Bayesian inversion \cite{S10} can in principle overcome such issues.  A natural Bayesian model for $f$ is obtained by placing a prior probability measure $\Pi$ on a $\sigma$-field $\mathcal S$ of some  parameter space $$\mathcal F \subset C^2(\mathcal O) \cap \big\{f: f_{min} \le \inf_{x \in \mathcal O}f(x)\big\}, ~f_{min}>0,$$ so that unique pathwise solutions to (\ref{diffus}) exist for all $f \in \mathcal F$, with transition densities $p_{D,f}$ as after (\ref{generator}). If $\mathcal B_\mathcal O$ denotes the Borel $\sigma$-field of $\mathcal O$, and if the maps $(f, x,y) \mapsto p_{D,f}(x,y)$ are jointly Borel measurable from $(\mathcal F \times \mathcal O \times \mathcal O, \mathcal S \otimes \mathcal B_\mathcal O \otimes \mathcal B_\mathcal O) \to \R$, then basic arguments (cf.~\cite{GV17} and also \cite{NS17}) show that the posterior distribution is given by
\begin{equation}\label{posterior}
\Pi(B|X_0, X_D, \dots, X_{ND}) = \frac{\int_B \prod_{i=1}^N p_{D,f}(X_{(i-1)D}, X_{iD}) d\Pi(f)}{\int_\mathcal F \prod_{i=1}^N p_{D,f}(X_{(i-1)D}, X_{iD}) d\Pi(f)},~~~B \in \mathcal S.
\end{equation}
This formula exposes the relationship of our setting to Bayesian non-linear inverse problems with PDEs \cite{S10, N23}, since the non-linear solution map $f \mapsto p_{D,f}$ of (the fundamental solution of) a parabolic PDE features in the likelihood term. Even though our measurement model is much more complex than the additive Gaussian noise models considered in \cite{S10, N23}, we can still leverage computational ideas from this literature -- see Remark \ref{compu} for details.

\vspace{2pt plus 1pt minus 1pt}

The priors $\Pi$ we consider will be of Gaussian process type. With an eye on obtaining sharp results in some cases we give a concrete construction of a prior, but the proofs below can be applied to general classes of high- or infinite-dimensional priors (commonly used in the literature \cite{GV17, MNP21, MNP21a, N23}) replacing the truncated Gaussian series in the next display. Take the first $K$ eigenfunctions $\{e_k: 0 \le k \le K\}$ of the Neumann-Laplacian $-\Delta=-\mathcal L_1$ for eigenvalues $0=\lambda_0<\lambda_1 \le \lambda_2<\dots$, and for $s\ge 0$ define a Gaussian random field
$$\theta(x) = \frac{\zeta(x)}{N^{d/(4s+4+2d)}} \Big(g_0 + \sum_{1 \le k \le K} \lambda_k^{-s/2} g_k e_k(x)\Big),~~ x \in \mathcal O,~~g_{k} \sim^{iid} N(0,1),~~ K \in \mathbb N,$$ 
where for some compact subset $\mathcal O_{0} \subset \mathcal O$, the map $\zeta \in C^\infty_c(\mathcal O)$ is a non-negative cut-off function vanishing on $\mathcal O \setminus \mathcal O_{0}$ and equal $1$ on some further compact subset $\mathcal O_{00}$ of the interior of $\mathcal O_{0}$. As in \cite{MNP21, N23}, the $N$-dependent rescaling provides extra regularisation required in the proofs -- it also allows us to remove the strong prior restrictions from \cite{NS17} in the case $d=1$.

\vspace{2pt plus 1pt minus 1pt}

For fixed $K$, the $Law(\theta)$ of $\theta$ is a probability measure supported in the space $\R^{K+1} \simeq \{\zeta g: g \in E_K\}$ where $E_K \subset C^\infty$ is the finite-dimensional linear span of the $\{e_k: 0 \le k \le K\}$. As $K \to \infty$ the $Law(\theta)$ models a $s$-smooth Gaussian random field on $\mathcal O$ that is supported in a strict subset of $\mathcal O$. The prior for the diffusivity $f \in \mathcal F = C^2 \cap \{f \ge 1/4\}$ (equipped with the trace Borel $\sigma$-algebra $\mathcal S$ of the separable Banach space $C(\bar{\mathcal O})$) is then 
\begin{equation}\label{gp}
f =f_\theta= \frac{1}{4} + \frac{e^{\theta}}{4}, ~~\Pi = Law(f)
\end{equation}
which equals $1/2$ on $\mathcal O \setminus \mathcal O_{0}$. Note that the `base case' $\theta=0$ corresponds to $f=1/2$ and hence to the case where the diffusion in (\ref{diffus}) is a standard reflected Brownian motion with generator $\mathcal L_{1/2}=\Delta/2$. The construction  can be adapted to any fixed $f_{min}>0$ replacing $1/4$.

\begin{figure}
   \centering
    \includegraphics[trim=17 17 17 10, clip, width=0.33 \textwidth]{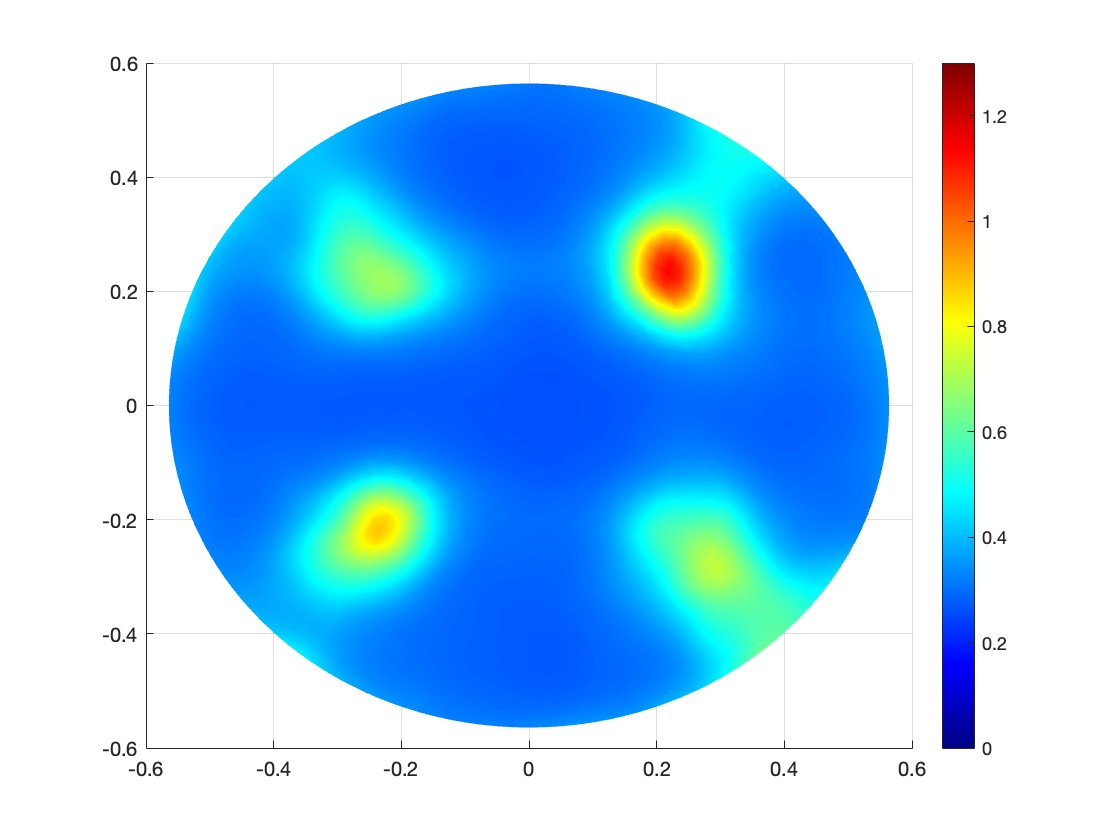}  \includegraphics[trim=20 20 20 10, clip, width=0.32 \textwidth]{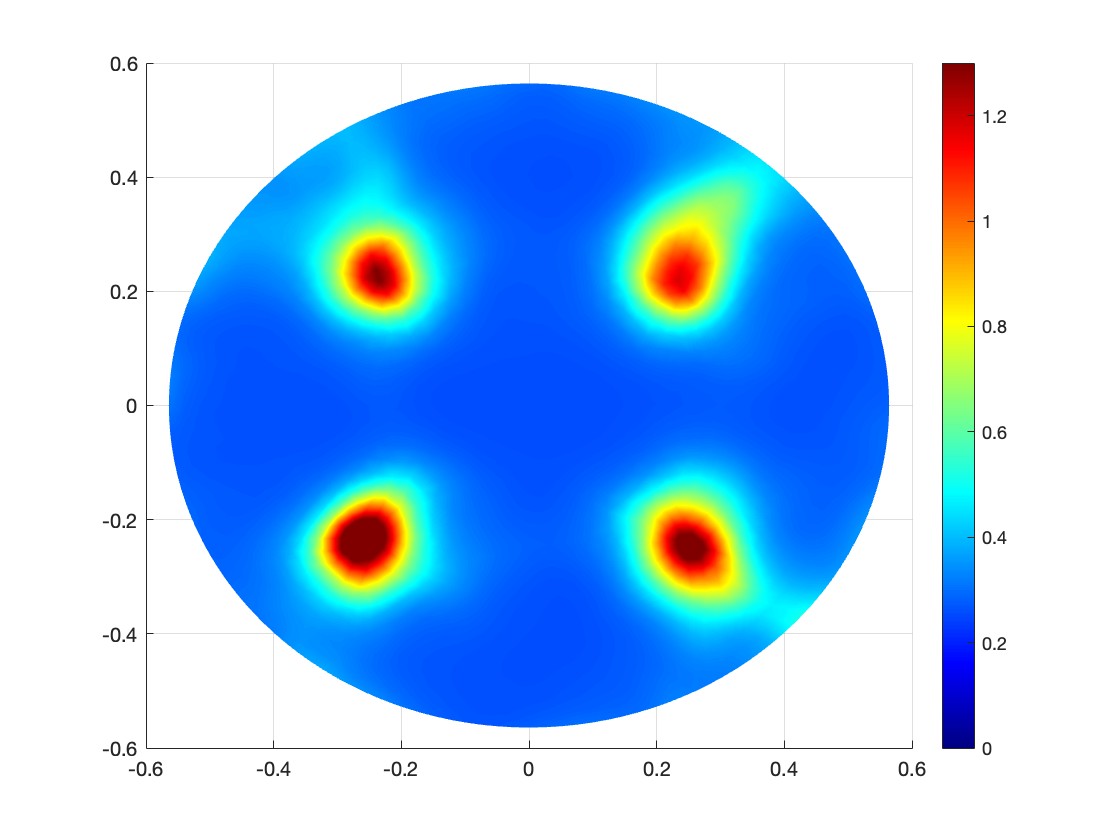}   \includegraphics[trim=20 20 20 10, clip, width=0.32 \textwidth]{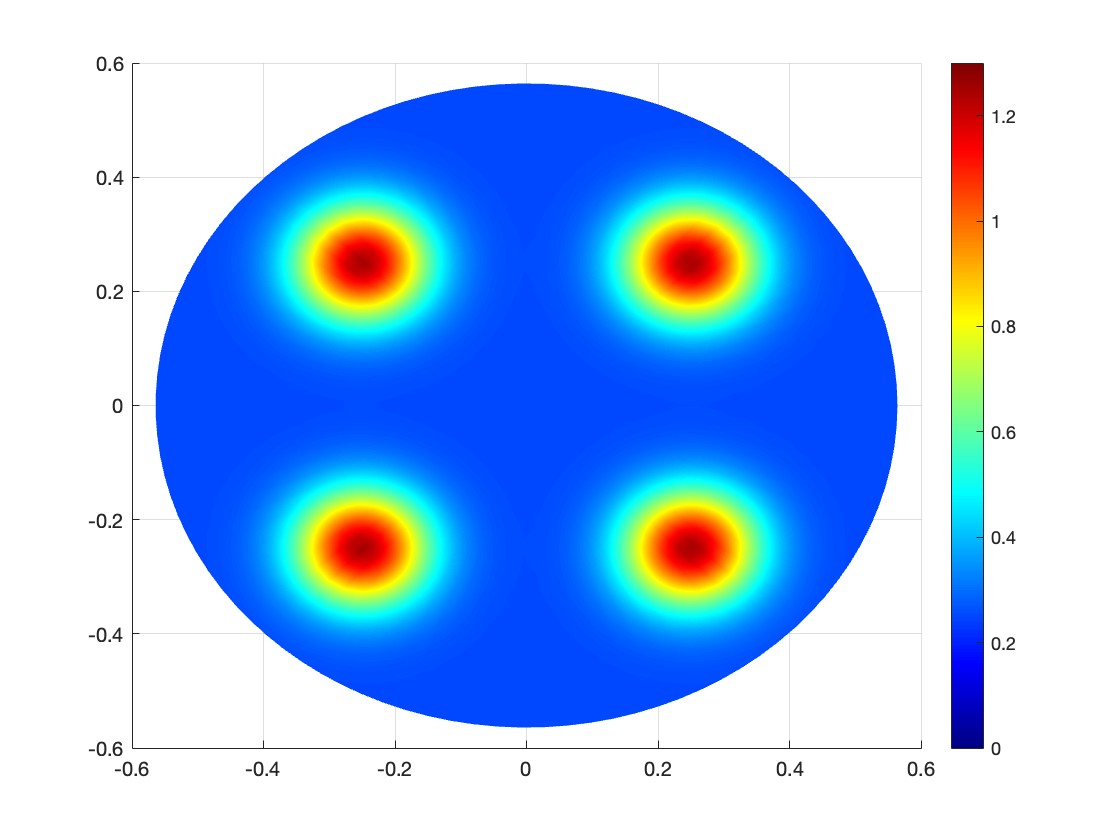}
    \caption{The posterior mean estimate $f_{\bar \theta}$ with $\bar\theta = M^{-1}\sum_{m=1}^M \vartheta_m$ after $M = 10000$ pCN iterates, for sample sizes $N=2500$ (left) and $N=25000$ (center), at sampling frequency $D=0.05$; the true field $f_0$ (right).}
    \label{fig:estimators}
\end{figure}

\begin{remark}\label{compu} \normalfont
The numerical computation of the posterior measure (\ref{posterior}) is possible via MCMC methods. For instance, since our priors are Gaussian, we can use the standard pCN proposal (see \cite{CRSW13} or Section 1.2.4 in \cite{N23}) to set up a Markov chain $(\vartheta_m)_{m=1}^M \in \R^{K+1}$ that has $\Pi(\theta|X_0, X_D, \dots, X_{ND})$ as invariant distribution. Posterior functionals $$E^\Pi [H(\theta)|X_0, X_D, \dots, X_{ND}], ~~H :\R^{K+1} \to \R^k,~~k \in \mathbb N,$$ can be approximated by ergodic averages $M^{-1}\sum_{m=1}^MH(\vartheta_m),$ see Fig.~\ref{fig:estimators} for an illustration with $H=id$. The computation of each iterate $\vartheta_m$ of this chain requires the draw of a $(K+1)$-dimensional Gaussian (from the prior) and the evaluation of the log-likelihood function $$\ell_N(\vartheta_m) \equiv \sum_{i=1}^N \log p_{D,f_{\vartheta_m}}(X_{(i-1)D}, X_{iD}).$$  In light of the representation (\ref{fundamental}), the latter can be evaluated by standard numerical methods for elliptic PDEs that compute the first few eigen-pairs $(e_{j,f_{\vartheta_m}}, \lambda_{j,f_{\vartheta_m}})$ of the differential operator $-\mathcal L_{f_{\vartheta_m}}$ with Neumann boundary conditions. Explicit error bounds for the approximation of the transition densities can be obtained from the exponential decay of the tail of the series in (\ref{fundamental}) via Corollary \ref{weyland}, and since $D>0$ is fixed in our setting. Moreover, taking limits in the pseudo-linearisation identity (\ref{repspec}) below allows to check the gradient stability condition from \cite{NW20, BN21}, which is a key to give computational guarantees for MCMC.
\end{remark}

\begin{remark}[Adding a drift] \label{ohbaby} \normalfont
The above Bayesian methodology extends to more general diffusion models (\ref{diffus0}) by proceeding as in \cite{NS17}, Sec.~2.3.2. One employs a hierarchical prior construction that first specifies a prior for the diffusivity $f$ and then models the `remaining' drift $\nabla U$ \textit{conditionally} on $f$, for instance by priors as in \cite{GR22}. One can the employ MCMC samplers for hierarchical priors, and proceed similar to \cite{vdMS17} in the `drift step'. Alternatively one can simply plug in an empirical estimate for $U$ (e.g., via an estimate $\hat \mu$ as after (30) in \cite{NS17}), avoiding hierarchical methods. When $d>1$, the case of drift vector fields in (\ref{diffus0}) that are \textit{not} in gradient form $\nabla U$ is innately more challenging as one loses self-adjointness of the infinitesimal generator. Some ideas for how to deal with such \textit{non-reversible} processes can be found in \cite{NR20, AWS22}, but for many applications, gradients of `force' potentials $U$ provide natural non-parametric models with relevant physical interpretation \cite{GR22, HDTD22, Hetal23}
\end{remark}

\begin{remark}[Different boundary conditions] \normalfont The \textit{reflected} diffusion model (\ref{diffus}) -- which corresponds to Neumann boundary conditions -- is essential to obtain a Markov process that does not `terminate' at a finite time (as would be the case for Dirichlet boundary conditions). Processes that are periodic on a $d$-dimensional cube, or that reflect along directions different from the inward normal vector at $\partial \mathcal O$, can be accommodated as well (but, at least in the latter case, introduce further tedious technicalities).
\end{remark}

\subsection{Posterior consistency theorems}

We now obtain mathematical guarantees for the inference provided by $\Pi(\cdot|X_0, X_D, \dots, X_{ND})$, following the programme of Bayesian Non-parametrics \cite{GV17} in the context of non-linear inverse problems \cite{N23}. 

\subsubsection{Posterior reconstruction of $P_{D,f}$}

We show that the Bayesian approach attains the optimal convergence rate for inference on the transition operator at the `observed' times $D$. \begin{theorem}\label{maint}
Consider discrete data $X_0, X_D, \dots, X_{N D},$ at fixed observation distance $D>0$, from the reflected diffusion model (\ref{diffus}) on a bounded convex domain $\mathcal O \subset \R^d$ with smooth boundary, started at $X_0 \sim Unif(\mathcal O)$. Assume $f_0 \in H^s, s>\max(2+d/2, 2d-1)$, satisfies $\inf_{x \in \mathcal O}f_0(x) > 1/4$ and $f_0=1/2$ on $\mathcal O \setminus \mathcal O_{00}$. Let $\Pi(\cdot|X_0, X_D, \dots, X_{ND})$ be the posterior distribution (\ref{posterior}) resulting from the  prior $\Pi$ for $f$ from (\ref{gp}) with $K \simeq N^{d/(2s+2+d)}$ and the given $s$. Then there exists $M$ depending  on $D, \mathcal O, \mathcal O_{00}, s, d$ and $U \ge \|f_0\|_{H^s}$ such that
\begin{equation}
\Pi(f: \|P_{D,f} - P_{D,f_0}\|_{L^2 \to L^2} \ge M N^{-(s+1)/(2s+2+d)}|X_0, X_D, \dots, X_{ND}) \to_{N \to \infty}^{\mathbb P_{f_0}} 0.
\end{equation}
\end{theorem}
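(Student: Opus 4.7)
The plan is to apply a general posterior contraction theorem in the Markov-chain setting (as used for diffusions in \cite{NS17}, and following the blueprint of \cite{MNP21, N23} for non-linear inverse problems). The goal is to show that the posterior concentrates on a Hellinger-type neighbourhood of the transition density $p_{D,f_0}$ at rate $\epsilon_N = N^{-(s+1)/(2s+2+d)}$, so that $N\epsilon_N^2 \simeq N^{d/(2s+2+d)} \simeq K$ matches the effective dimension of the truncated Gaussian series in \eqref{gp}. I would then convert this contraction into an operator-norm bound on $P_{D,f}-P_{D,f_0}$ using heat-kernel equivalences of $h_\mu$ with Hilbert--Schmidt norm.

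The key technical step is the small-ball (`prior mass') condition. By Theorem \ref{lfsball}, the Kullback--Leibler divergence between $\mathbb{P}_{f_0}^N$ and $\mathbb{P}_f^N$ is bounded by $N$ times a negative Sobolev norm $\|f-f_0\|_{H^{-1}}^2$, uniformly on sets where $f$ stays bounded away from $0$ and $\infty$ in a $C^2$-sense (the assumption $s>2+d/2$ guarantees $f_0\in C^2(\bar{\mathcal{O}})$ via Sobolev embedding). Combined with the local Lipschitz property of $\theta\mapsto f_\theta=(1+e^\theta)/4$, the small-ball condition reduces to a lower bound on $\Pi(\|\theta-\theta_0\|_{H^{-1}}\le c\epsilon_N)$, where $\theta_0=\log(4f_0-1)$ is compactly supported in $\mathcal{O}_0$ (consistent with the prior support through the cut-off $\zeta$). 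Standard concentration-function theory for Gaussian process priors (Ch.~11 of \cite{GV17}, in the sharp form used in \cite{MNP21, N23}) then shows that the rescaling factor $N^{-d/(4s+4+2d)}$ and the truncation level $K\simeq N^{d/(2s+2+d)}$ are calibrated so that both the decentering $\inf\{\|h\|_{\mathcal H}^2:\|h-\theta_0\|_{H^{-1}}\le\epsilon_N\}$ and the centred small-ball $-\log\Pi(\|\theta\|_{H^{-1}}\le\epsilon_N)$ are of order $N\epsilon_N^2$. A sieve $\mathcal{F}_N=\{f_\theta:\|\theta\|_{H^s}\le M_N\}$ with suitable $M_N$ has Gaussian-tail mass $\le\exp(-c'N\epsilon_N^2)$ by Borell--Sudakov--Tsirelson, and its metric entropy is $\lesssim K\lesssim N\epsilon_N^2$ since only the first $K$ Neumann--Laplacian modes are active in the prior.

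Assembling these inputs in the Markov-chain posterior contraction theorem (e.g.\ as in \cite{NS17}, exploiting the exponential mixing of the process) yields contraction at rate $\epsilon_N$ in the average Hellinger distance
\[
h_\mu^2(p_{D,f},p_{D,f_0})=\int_{\mathcal{O}\times\mathcal{O}}\bigl(\sqrt{p_{D,f}(x,y)}-\sqrt{p_{D,f_0}(x,y)}\bigr)^2 d\mu(x)\,dy.
\]
At fixed $D>0$, the transition densities $p_{D,f}$ are uniformly bounded above and away from zero on $\bar{\mathcal{O}}\times\bar{\mathcal{O}}$ by classical parabolic heat-kernel estimates (valid on the sieve, where $f\ge f_{min}$ and $\|f\|_{C^2}$ is controlled), so $h_\mu$ is equivalent to $\|p_{D,f}-p_{D,f_0}\|_{L^2(\mathcal{O}\times\mathcal{O})}=\|P_{D,f}-P_{D,f_0}\|_{HS}$, which dominates $\|P_{D,f}-P_{D,f_0}\|_{L^2\to L^2}$. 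The resulting rate matches the minimax lower bound of Theorem \ref{opmxlb}, confirming optimality. The principal obstacle I expect is the small-ball estimate in the negative-Sobolev norm of Theorem \ref{lfsball}: aligning the RKHS geometry of the rescaled truncated Gaussian prior with this weak norm is precisely what the particular rescaling and truncation are designed for, and making the various powers of $N$ line up (decentering, entropy, sieve mass, contraction rate) is the delicate bookkeeping at the heart of the argument.
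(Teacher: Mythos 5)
Your small-ball argument is on the right track and closely mirrors what the paper does: Theorem \ref{lfsball} converts the Kullback--Leibler ball condition into a $(H^1_c)^*$-ball condition for $\theta-\theta_0$, and the rescaled truncated Gaussian prior is calibrated so that both the decentering and centred small-ball exponents are of order $N\delta_N^2$ with $\delta_N\simeq N^{-(s+1)/(2s+2+d)}$; the sieve $\mathcal F_N$ is built from the Gaussian isoperimetric theorem exactly as you sketch. So that half of your plan matches the paper.

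The genuine gap is in the testing step. You propose to get contraction in the \emph{average Hellinger distance} $h_\mu$ via entropy-based (Birg\'e/Le Cam) testing theory, as in \cite{GV17, MNP21, N23}, and then upgrade to operator-norm contraction via the two-sided heat-kernel bounds. But those Hellinger tests are constructed for i.i.d.\ data; the observations here form a Markov chain, and the paper explicitly states it cannot rely on Hellinger testing theory in this non-i.i.d.\ setting. Instead, the paper constructs a \emph{plug-in test} $\Psi_N = 1\{\|\hat P_J - P_{D,f_0}\|_{L^2\to L^2}\ge M\delta_N\}$, where $\hat P_J$ is the explicit projection estimator of (\ref{hatp}), and controls the type-I and type-II errors by the Bernstein-type concentration inequality for ergodic averages of the Markov chain (Proposition \ref{bernie}, via the spectral-gap variance bounds of \cite{P15}), giving the required $e^{-cN\delta_N^2}$ tail through Proposition \ref{opconc}. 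This also explains why the contraction is stated directly in $\|P_{D,f}-P_{D,f_0}\|_{L^2\to L^2}$ rather than in $h_\mu$: the test is built in the operator norm, and the sieve entropy bound you invoke is never used --- the exponential concentration of $\hat P_J$ over the $\R^J$-net plays its role. Without replacing your Hellinger-test step by such a concentration-inequality-based plug-in test (or by proving a Markov-chain analogue of the Birg\'e testing lemma, which is not available off the shelf), the argument does not close.
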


Inspection of our proofs shows that one obtains convergence rates also in $\|\cdot\|_{H^\alpha \to H^\alpha}$ norms as in (\ref{alpharate}). When the first non-zero eigenvalue $\lambda_{1,f_0}$ of $\mathcal L_{f_0}$ is simple, the previous theorem implies consistency of the PCA provided by $P_{D,f}$. Since draws $P_{D,f}|X_0, X_D, \dots, X_{ND}$ are self-adjoint Markov transition operators, we can  extract their `principal component', or second eigenfunction, $e_{1,f}$. By the operator norm convergence of $P_{D,f}$ to $P_{D,f_0}$ the simplicity of the eigenvalue $\lambda_{1,f_0}$ eventually translates into simplicity of $\lambda_{1,f}$ with probability approaching one, and a unique $e_{1,f}$ then exists (up to choice of sign), cf.~Proposition \ref{opperture}. Using quantitative perturbation arguments (e.g., Proposition 4.2 in \cite{GHR04}) one obtains
\begin{equation}
\Pi(f: \|e_{1,f} - e_{1,f_0}\|_{L^2(\mathcal O)} \ge M N^{-(s+1)/(2s+2+d)}|X_0, X_D, \dots, X_{ND}) \to^{\mathbb P_{f_0}} 0.
\end{equation}
In dimension $d=1$, the top eigenfunction fully identifies $f$ with an explicit reconstruction formula \cite{GHR04, NS17}, but in multi-dimensions this approach is not feasible, also because $\lambda_{1,f_0}$ is not simple in general, in which case the PCA for the eigenfunction will not be consistent.

\subsubsection{Consistency and convergence rates for the non-linear inverse problem}

We now state the main statistical result of this article. 

\begin{theorem}\label{main}
Consider the setting of Theorem \ref{maint}. Then there exists a sequence $\eta_N \to 0$ such that as $N \to \infty$,
\begin{equation}
\Pi(f: \|f - f_0\|_{L^2(\mathcal O)} \ge \eta_N|X_0, X_D, \dots, X_{ND}) \to^{\mathbb P_{f_0}} 0,
\end{equation}
as well as, for any $t>0$,
\begin{equation}\label{transrate}
\Pi(f: \|P_{t,f} - P_{t,f_0}\|_{HS} \ge \eta_N|X_0, X_D, \dots, X_{ND}) \to^{\mathbb P_{f_0}} 0.
\end{equation}
Specifically we can take $\eta_N = O((\log N)^{-\delta'})$ for some $\delta'>0$. Moreover, if in addition (\ref{sunnyside}) holds for $f_0$, then we can take $\eta_N = O(N^{-(s-1)/(2s+2+d)})$. 
\end{theorem}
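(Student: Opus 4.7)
The plan is to feed the posterior contraction rate for the observed-time operator $P_{D,f}$ from Theorem~\ref{maint} through the stability inequalities of Theorems~\ref{logstabthm} and~\ref{stabop} and Remark~\ref{tdstab}. Before invoking stability I restrict the posterior to a \emph{regularity sieve} $\mathcal R_N$ on which every draw $f$ satisfies the hypotheses of these stability results. The boundary condition $f=f_0$ on $\mathcal O\setminus\mathcal O_0$ and the lower bound $f\ge 1/4$ hold automatically because $\theta$ is built from $\zeta\in C_c^\infty(\mathcal O_0)$ in~(\ref{gp}), so $f=1/2=f_0$ off $\mathcal O_0$. The Sobolev bound $\|f\|_{H^s}\le U_N$ (and in particular the $C^2$ bound, via Sobolev embedding using $s>2+d/2$) on a set of posterior mass tending to one is the standard consequence of the Gaussian small-ball estimate and sieve construction already underlying Theorem~\ref{maint}; on $\mathcal R_N$ the constants in Theorems~\ref{logstabthm} and~\ref{stabop} are uniform.

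For the $L^2$-rate on $f$: In the generic case, Theorem~\ref{maint} gives $\|P_{D,f}-P_{D,f_0}\|_{L^2\to L^2}\le M N^{-(s+1)/(2s+2+d)}$ with posterior probability tending to one. Substituting this polynomial bound into the logarithmic modulus~(\ref{logstab0}) yields $\|f-f_0\|_{L^2}\lesssim (\log N)^{-2/3}$, so any $\eta_N=(\log N)^{-\delta'}$ with $\delta'<2/3$ works. Under~(\ref{sunnyside}), I first upgrade Theorem~\ref{maint} to the sharper $\|\cdot\|_{H^2\to H^2}$ contraction at rate $N^{-(s-1)/(2s+2+d)}$: the same testing and prior-mass machinery goes through verbatim with the $H^2\to H^2$ concentration of $\hat P_J$ recorded in~(\ref{alpharate}) replacing its $L^2\to L^2$ counterpart, since the sieve controls all relevant Sobolev norms. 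Applying Theorem~\ref{stabop} on $\mathcal R_N$ then directly delivers $\|f-f_0\|_{L^2}\lesssim N^{-(s-1)/(2s+2+d)}$.

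For the transition operators at arbitrary $t>0$: when $0<t<D$, Remark~\ref{tdstab} applies verbatim, giving~(\ref{backlog}) in general (hence the logarithmic rate) and~(\ref{backlip}) under~(\ref{sunnyside}) (the H\"older rate). When $t\ge D$, the operator-Lipschitz property of $\kappa_{t,D}$ noted there yields $\|P_{t,f}-P_{t,f_0}\|_{L^2\to L^2}\lesssim\|P_{D,f}-P_{D,f_0}\|_{L^2\to L^2}$; the upgrade to the Hilbert--Schmidt norm then follows from the elementary interpolation $\|A\|_{HS}^2 \le \|A\|_{L^2\to L^2}\cdot\|A\|_{\mathrm{tr}}$ together with the a~priori trace-class bound $\|P_{t,f}-P_{t,f_0}\|_{\mathrm{tr}}\le C$, which is inherited from the uniform density bound $\sup_{f\in\mathcal R_N}\|p_{t/2,f}\|_\infty<\infty$ (reviewed in Section~\ref{prfs}) via $P_{t,f}=P_{t/2,f}\circ P_{t/2,f}$. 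The same inequality at $t=D$ delivers the borderline case.

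The main technical obstacle is the upgrade of Theorem~\ref{maint} from an $L^2\to L^2$ to an $H^2\to H^2$ operator-norm contraction at the same rate, needed for the fast-rate regime. This requires redoing the testing construction in the stronger norm; the key input is the $H^\alpha\to H^\alpha$ concentration of $\hat P_J$ from~(\ref{alpharate}) (at $\alpha=2$), which is available because the Sobolev scaling built into the prior~(\ref{gp}) and the sieve $\mathcal R_N$ guarantee that the eigenfunction-projection apparatus of Theorem~\ref{opmx} transfers cleanly. Everything else reduces to the already established stability inequalities, so the remaining manipulations are routine.
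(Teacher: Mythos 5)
Your overall strategy matches the paper's: feed the $\|P_{D,f}-P_{D,f_0}\|_{L^2\to L^2}$ contraction of Theorem \ref{maint} (upgraded, under (\ref{sunnyside}), to the $\|\cdot\|_{H^2\to H^2}$ contraction at rate $\tilde\delta_N=N^{-(s-1)/(2s+2+d)}$ via Corollary \ref{opconc2} — you cite (\ref{alpharate}), but the exponential tail needed in the testing step is in Corollary \ref{opconc2}) through the stability estimates of Theorems \ref{logstabthm} and \ref{stabop}, restricting throughout to the sieve $\mathcal F_N$ from (\ref{regiso}) on which the Sobolev bounds are uniform. This gives the $\|f-f_0\|_{L^2}$ contraction rates exactly as in the paper, and for $0<t<D$ the HS rates follow from (\ref{backlog}), (\ref{backlip}), again as in the paper.

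There is, however, a genuine gap in your treatment of $t\ge D$ in the fast-rate regime. Your chain is $\|P_{t,f}-P_{t,f_0}\|_{HS}^2 \le \|P_{t,f}-P_{t,f_0}\|_{L^2\to L^2}\,\|P_{t,f}-P_{t,f_0}\|_{\mathrm{tr}} \lesssim \|P_{D,f}-P_{D,f_0}\|_{L^2\to L^2}$, so you end with $\|P_{t,f}-P_{t,f_0}\|_{HS}\lesssim \delta_N^{1/2}=N^{-(s+1)/(2(2s+2+d))}$. But the claimed rate is $\eta_N=N^{-(s-1)/(2s+2+d)}=N^{-2(s-1)/(2(2s+2+d))}$, and $\delta_N^{1/2}\le\eta_N$ requires $s\le 3$. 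Since the hypotheses of Theorem \ref{maint} force $s>2d-1$, your bound is strictly too weak whenever $d\ge 2$ (and for part of the $d=1$ range). The interpolation $\|A\|_{HS}^2\le\|A\|_{\mathrm{op}}\|A\|_{\mathrm{tr}}$ with a priori trace bound is sound but wasteful here. The intended route — and the one the paper uses for \emph{all} $t>0$ — is to invoke Theorem \ref{lfsball} with $t$ in place of $D$ and the imbedding $L^2\subset(H^1_c)^*$: on the sieve, $\|P_{t,f}-P_{t,f_0}\|_{HS}\lesssim\|f-f_0\|_{(H^1_c)^*}\lesssim\|f-f_0\|_{L^2}$ with constants uniform over $\|f\|_{H^s}\le U$. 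Once the $\|f-f_0\|_{L^2}\le\eta_N$ contraction is in hand, this gives the HS contraction at rate $\eta_N$ for every fixed $t>0$ in one line, making the $t\ge D$ case (and the $t=D$ borderline) an immediate corollary rather than a separate argument.
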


When $t \ge D$ we could obtain directly the convergence rate $\eta_N=N^{-(s+1)/(2s+2+d)}$ for operator norms $\|P_{t,f} - P_{t,f_0}\|_{L^2\to L^2}$ from Theorem \ref{maint} and the argument sketched at the beginning of Remark \ref{tdstab}.  But for $t<D$ we are solving a genuine inverse problem. Note further that the $HS$-norms equivalently bound the $L^2(\mathcal O \times \mathcal O, dx \otimes dx)$ norms of the difference between the transition densities $p_{t,f}-p_{t,f_0}$ from (\ref{fundamental}).

In order to obtain faster rates $\eta_N$, the hypothesis (\ref{sunnyside}) needs to hold only at the ground truth $f_0$ and not throughout the parameter space of prior diffusivities $f$. Next to the one-dimensional case discussed in Remark \ref{onedee}, Theorem \ref{cylinderth} describes an infinite-dimensional class of $f_0$'s for which such faster rates can indeed be attained also when $d\ge 2$.

Using uniform integrability type arguments as in \cite{MNP21, N23}, a similar convergence rate can be proved for the posterior mean vector $\bar \theta = E^\Pi[\theta|X_0, X_D, \dots, X_{ND}]$ and the induced conductivity $f_{\bar \theta}$ and transition operators $P_{t, f_{\bar \theta}}$, yielding Theorem \ref{showoff}. See Subsection \ref{show}.

\section{Proofs}\label{prfs}

\subsection{Analytical background: reflected diffusions and their generators}

\subsubsection{Divergence form operators}

Let $\mathcal O$ be a bounded convex domain in $\R^d$ with smooth boundary and such that $vol(\mathcal O)=1$. Consider the divergence form elliptic operator $\mathcal L_f \phi =  \nabla \cdot (f \nabla \phi)$ from (\ref{generator}). The Sobolev space $H^1(\mathcal O)$ can be endowed both with the usual norm $\|\phi\|_{H^1}=\|\phi \|_{L^2} + \|\nabla \phi\|_{L^2}$ or with the equivalent norm $\|\phi\|_{H^{1}_f} :=\|\phi \|_{L^2} + \|\sqrt f \nabla \phi\|_{L^2}$ with equivalence constants depending only on $f_{min}, \|f\|_\infty$. Moreover the elements of $H^1$ satisfying zero Neumann-boundary conditions (in the usual trace sense) are defined as $$H^1_\nu(\mathcal O) :=  \Big\{ \phi \in H^1(\mathcal O), \frac{\partial \phi}{\partial \nu} = 0 \text{ on } \partial \mathcal O  \Big \},$$ with $\nu$ the unit normal vector. By the divergence theorem (e.g., p.143 in \cite{TI})
\begin{equation}\label{greenid}
\langle \mathcal L_f \phi_1, \phi_2 \rangle_{L^2} = - \langle  f\nabla \phi_1, \nabla \phi_2 \rangle_{L^2} = \langle  \phi_1, \mathcal L_f \phi_2 \rangle_{L^2},~~\forall \phi_i \in H^1_\nu(\mathcal O),
\end{equation}
so $\mathcal L_f$ is self-adjoint for the $L^2$-inner product on $H^1_\nu$. This operator can be closed to give an operator $E_f$ on the domain $H^1(\mathcal O)$ that coincides with $-\mathcal L_f$ on $H^1_\nu$
(\cite{D95}, Theorem 7.2.1), and which corresponds to the bi-linear symmetric (Dirichlet) form
\begin{equation} \label{diric}
\mathcal E_f(\phi_1,\phi_2) = \langle  \sqrt f\nabla \phi_1, \sqrt f \nabla \phi_2 \rangle_{L^2},~~ \phi_i \in H^1(\mathcal O),
\end{equation}
which in turn defines a Markov process $(X_t: t \ge 0)$ arising from a semi-group with infinitesimal generator $\mathcal L_f$ and $d\mu(x) = dx$ as invariant probability measure. An application of Ito's formula shows that this Markov process describes solutions of the SDE (\ref{diffus}) with `reflection of the process at the boundary' provided by the (inward) normal vector $\nu$ and the `local time' process $L_t$ that is non-zero only when $X_t \in \partial \mathcal O$. Details can be found in \cite{T79}, \cite{B11} (ch.~37, 38), \cite{B98} (Sec.~I.12. and p.52) and also \cite{BGL14}.

\subsubsection{Spectral resolution of the generator}\label{specsec}

We recall here some standard facts on the spectral theory of the generator $\mathcal L_f$ with Neumann boundary conditions. The arguments follow closely the treatment of the standard Laplacian $f=1$ on p.403 in \cite{TI} (see also Ch.7.2 in \cite{D95}), and extend straightforwardly to $\mathcal L_f$ as long as $0< f_{min} \le f \le \|f\|_\infty \le U <\infty$.

Denote by $E_f$ the operator mapping $H^1$ into $L^2$ defined before (\ref{diric}).  By (\ref{diric}) the linear operator $id + E_f$ satisfies
\begin{equation}\label{h1est}
\langle (id + E_f) \phi, \phi \rangle_{L^2} = \|\phi\|_{L^2}^2 + \|\sqrt f \nabla \phi \|_{L^2}^2  = \|\phi\|_{H^{1}_f}^2 \simeq  \|\phi\|^2_{H^1},~~ \phi \in H^1,
\end{equation}
from which one deduces that the linear operator $id + E_f$ defines a bijection between $H^1$ and $(H^1)^*$ with operator norms depending only on $f_{min}, U$. If we restrict its inverse $T_{1,f}$ to the Hilbert space $L^2(\mathcal O)$ then it defines a self-adjoint operator which is also compact as it maps $L^2$ into $H^1$ which embeds compactly into $L^2$. By the spectral theorem there exist $\langle \cdot, \cdot \rangle_{L^2}$-orthonormal eigenfunctions $e_0=1$ and $e_{1,f}, \dots, e_{j,f}, \dots, \in H^1_\nu \cap L^2_0$ corresponding to eigenvalues $\lambda_0 = 0 \le  \lambda_{1,f}, \dots, \lambda_{j,f} \uparrow \infty$ such that $$\mathcal L_f e_{j,f} = - \lambda_{j,f} e_{j,f},~~j \in \mathbb N \cup \{0\}.$$  We denote by $$\mathcal L_f^{-1} =-\sum_{j \ge 1} \lambda_{j,f}^{-1} e_{j,f} \langle e_{j,f}, \cdot \rangle_{L^2}$$ the corresponding inverse operator acting on the Hilbert space $$L^2_0 :=L^2 \cap \left\{ \phi: \int_\mathcal O \phi(x)dx = \langle \phi, e_0 \rangle_{L^2} = 0\right\},$$ for which the $\{e_j: j \ge 1\}$ form an orthonormal basis. Clearly $L^2 = L^2_0 \oplus \{constants\}$.

We next record the following `uniform in $f$' spectral gap estimate: the first (nontrivial) eigenvalue $\lambda_{1,f}$ has variational characterisation (see Sec.~4.5 in \cite{D95})
\begin{align}\label{topevar}
\lambda_{1,f} & = - \sup_{u \in H^1_\nu : \langle u, 1 \rangle_{L^2} =0}\frac{\langle \mathcal L_f u, u \rangle_{L^2} }{\|u\|_{L^2}^2}  = \inf_{u \in H^1_\nu : \langle u, 1 \rangle_{L^2}=0}\frac{\langle f \nabla u, \nabla u \rangle_{L^2} }{\|u\|_{L^2}^2} \ge \frac{f_{min}}{p_\mathcal O}>0
\end{align}
where we have used the Poincar\'e-inequality (Theorem 1 on p.292 in \cite{E10}): $\|u\|_{L^2}^2 \le p_\mathcal O \|\nabla u\|_{L^2}^2$ for $u \in L^2_0$ and Poincar\'e constant $p_\mathcal O>0$ depending only on $\mathcal O$. For subsequent eigenvalues we know that they can have at most finite multiplicities (e.g., Theorem 4.2.2 in \cite{D95}), and in fact that they obey Weyl's law (e.g., using p.111 in \cite{TII}),
\begin{equation}\label{langweylig}
\lambda_{j,1} \simeq j^{2/d} \text{ as } j \to \infty.
\end{equation}
The preceding asymptotics hold initially for the standard Laplacian ($f=1$), with the constants involved depending only on $vol(\mathcal O),d$. By the variational characterisation of the $\lambda_j$'s (Sec.~4.5 in \cite{D95}) and since $$\frac{\langle f \nabla u, \nabla u \rangle_{L^2} }{\|u\|_{L^2}^2} \simeq \frac{\langle \nabla u, \nabla u \rangle_{L^2} }{\|u\|_{L^2}^2},~~f_{min} \le f \le \|f\|_\infty,$$ holds for the quadratic form featuring in (\ref{topevar}), the $\lambda_{j,f}$ corresponding to conductivities $f$ differ by at most a fixed constant that depends only on $f_{min}, \|f\|_\infty$.

Taking the eigenpairs $(e_{j,f}, \lambda_{j,f})$ of $\mathcal L_f$ one can define Hilbert spaces
\begin{equation}\label{sobnorm}
\bar H^k_f(\mathcal O) = \left\{\phi  \in L^2_0(\mathcal O) : \sum_{j \ge 1} \lambda_{j,f}^{k}\langle \phi, e_{j,f} \rangle_{L^2}^2 \equiv \|\phi\|^2_{\bar H^{k}} < \infty  \right\},~~ k \in \mathbb N.
\end{equation}
Any $\phi \in L^2_0$ can be written as $\sum_{j \ge 1} e_{j,f} \langle \phi, e_{j,f} \rangle_{L^2}$ and hence $\bar H^0_f=L^2_0$ by Parseval's identity. The following proposition (proved in Section \ref{aux}) summarises some basic properties.

\begin{proposition}\label{sobald}
Let $\mathcal O$ be a bounded convex domain in $\R^d$ with smooth boundary and let $f \in C^1(\mathcal O)$ be s.t.~$\inf_{x \in \mathcal O}f(x) \ge f_{min}>0$. Then $\bar H^1_f(\mathcal O) = H^1(\mathcal O) \cap L_0^2$ and 
\begin{equation} \label{pain}
\bar H^2_f = H^2 \cap H^1_\nu \cap L^2_0 = \big\{h \in L^2_0: \mathcal L_f h \in L^2_0, (\partial h/\partial \nu) =0 \text{ on } \partial \mathcal O\big\}.
\end{equation}
If we assume in addition that for some integer $k \ge 2$ either A)  $\|f\|_{C^{k-1}} \le U$ or B) $\|f\|_{H^s} \le U$ for some $s >d$ s.t.~$k \le s+1$, then we have $$\bar H^k_f(\mathcal O) \subset H^k(\mathcal O) \text{ and } \|\phi\|_{H^k} \simeq \|\phi\|_{\bar H^k_f}~~\text{for }\phi \in \bar H^k_f.$$ We further have the embedding $H^k_c \cap L^2_0 \subset \bar H^k_1$ and also if $H^k_c$ is replaced by $H^k_c / \R$ (modulo constants). Finally we have $\bar H^k_f = \bar H^k_{f'}$ for any pair $f,f'$ satisfying A) or B), with equivalent norms. All embedding/equivalence constants depend only on $f_{min}, U, d, k, \mathcal O$.
\end{proposition}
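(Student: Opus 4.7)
The plan is to establish the five claims in order, leveraging that the Dirichlet form $\mathcal E_f$ of (\ref{diric}) diagonalises in the eigenbasis $\{e_{j,f}\}$ and that $\mathcal L_f$ is a uniformly elliptic divergence-form operator on the smooth bounded domain $\mathcal O$. Throughout I write $\phi\in L^2_0$ in its eigen-expansion $\phi=\sum_{j\ge 1}c_j e_{j,f}$, $c_j=\langle\phi,e_{j,f}\rangle_{L^2}$, so that $\|\phi\|_{\bar H^k_f}^2 = \sum_j \lambda_{j,f}^k c_j^2$.

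For $\bar H^1_f=H^1\cap L^2_0$: the form domain of the closure $E_f$ equals $H^1(\mathcal O)$ (\cite{D95}, Ch.~7), hence Parseval applied to partial sums of the eigen-expansion gives $\|\sqrt f\nabla\phi\|_{L^2}^2 = \mathcal E_f(\phi,\phi) = \sum_j \lambda_{j,f}c_j^2$ for every $\phi\in H^1\cap L^2_0$; combined with $f_{min}\le f\le\|f\|_\infty$ and the Poincar\'e inequality on $L^2_0$, this yields $\|\phi\|_{\bar H^1_f}\simeq\|\phi\|_{H^1}$. For the case $k=2$ giving (\ref{pain}): by standard Neumann elliptic regularity on smooth bounded domains (\cite{E10}, Ch.~6.3, or \cite{TI}, Ch.~5), any $\phi\in L^2$ with weak $\mathcal L_f\phi\in L^2$ and $\partial\phi/\partial\nu=0$ on $\partial\mathcal O$ lies in $H^2$ with $\|\phi\|_{H^2}\lesssim\|\mathcal L_f\phi\|_{L^2}+\|\phi\|_{L^2}$, the constant depending only on $\|f\|_{C^1}$ and $f_{min}$. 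Applied to the partial sums $\phi_n=\sum_{j\le n}c_j e_{j,f}\in H^1_\nu$, this shows $\phi_n\to\phi$ in $H^2$ whenever $\sum\lambda_{j,f}^2 c_j^2<\infty$, giving $\bar H^2_f\subset H^2\cap H^1_\nu\cap L^2_0$; the reverse inclusion uses the identity $\|\mathcal L_f\phi\|_{L^2}^2 = \sum\lambda_{j,f}^2 c_j^2$ for $\phi\in H^2\cap H^1_\nu$ (integration by parts), and delivers the norm equivalence $\|\phi\|_{\bar H^2_f}\simeq\|\phi\|_{H^2}$.

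For $k\ge 3$ under (A) or (B) I would bootstrap: given $\phi\in\bar H^k_f$, set $g=\mathcal L_f\phi$, which lies in $\bar H^{k-2}_f\subset H^{k-2}$ by induction, and then apply higher-order Neumann regularity for the equation $\mathcal L_f\phi=g$ to conclude $\phi\in H^k$ with $\|\phi\|_{H^k}\lesssim\|g\|_{H^{k-2}}+\|\phi\|_{L^2}$. In case (A) with $f\in C^{k-1}$ this is the classical Schauder/Evans estimate; in case (B) with $f\in H^s$, $s>d$, one invokes that $H^s$ is a multiplier algebra on $H^r$ for $|r|\le s$ (\cite{TII}, Sec.~13.3), so the same bootstrap proceeds as long as $k\le s+1$. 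The reverse inequality $\|\phi\|_{\bar H^k_f}\lesssim\|\phi\|_{H^k}$ on $\phi\in\bar H^k_f$ follows by writing $\|\phi\|_{\bar H^k_f}$ as an $L^2$-norm of iterated applications of $\mathcal L_f$ (with a mixed gradient version for odd $k$) and using the same multiplier estimates.

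For $H^k_c\cap L^2_0\subset\bar H^k_1$: any $\phi\in H^k_c$ has all derivatives vanishing near $\partial\mathcal O$, so trivially satisfies every iterated Neumann condition for $-\Delta$, and integration by parts gives $\sum\lambda_{j,1}^k c_j^2$ as an $L^2$-norm of iterated Laplacians of $\phi$ bounded by $\|\phi\|_{H^k}^2$; the $H^k_c/\R$ version follows by subtracting the mean. Finally $\bar H^k_f=\bar H^k_{f'}$ follows from (3): both sides coincide with the same $H^k$-closed subspace of $L^2_0$ cut out by iterated $\mathcal L$-Neumann boundary conditions, which across the admissible class of strictly positive smooth multipliers reduce to the same derivative conditions on $\partial\mathcal O$ (since $f,f'\ge f_{min}>0$ are just positive factors there). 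The main obstacle will be claim (3) under hypothesis (B), which requires careful tracking of the Sobolev-multiplier estimate through each step of the bootstrap subject to $k\le s+1$; the set-equality in the final claim for $k\ge 4$ is also delicate, as the iterated Neumann conditions a priori depend on $f$, so one must verify that the $f$-dependence reduces to a common core of derivative conditions on $\partial\mathcal O$ and that the associated norms then satisfy the stated equivalence with constants uniform over the admissible coefficient class.
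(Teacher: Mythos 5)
Your overall route -- Parseval in the $\mathcal L_f$-eigenbasis to identify $\bar H^1_f$ with the form domain, $L^2$-elliptic regularity against Neumann data for $k=2$, and an inductive bootstrap with Sobolev-multiplier estimates for $k\ge 3$ -- is essentially the paper's. The paper implements the bootstrap by reducing everything to the isomorphism property of the Laplacian pair $(\Delta,\partial/\partial\nu)$ (its display (\ref{isom})) and writing $\Delta\varphi = f^{-1}(\mathcal L_f\varphi-\nabla f\cdot\nabla\varphi)$, which handles both even and odd $k$ uniformly; your appeal to ``higher-order Neumann regularity for $\mathcal L_f$'' and a ``mixed gradient version for odd $k$'' is the same thing once unpacked, but you leave those steps unspecified. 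That part is recoverable.

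The genuine problem is your justification of $\bar H^k_f=\bar H^k_{f'}$. You claim the iterated Neumann conditions ``reduce to the same derivative conditions on $\partial\mathcal O$ (since $f,f'\ge f_{min}>0$ are just positive factors there).'' That is not correct: the condition $\partial(\mathcal L_f\phi)/\partial\nu=0$ involves $\nabla f$ on $\partial\mathcal O$, not only a positive scalar. Concretely, in $d=1$ on $(0,1)$ with $\phi'(0)=0$ one has
\begin{equation*}
(\mathcal L_f\phi)'(0) = 2f'(0)\phi''(0) + f(0)\phi'''(0),
\end{equation*}
so the constraint imposed at level $k=4$ depends on $f'(0)$. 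Thus the sets are not cut out by coefficient-independent boundary conditions, and the matching you assert does not hold as stated. You correctly flag this as the delicate point, but the argument you offer for it fails. The paper's proof sidesteps explicit boundary conditions entirely: it uses the characterization $\bar H^{k+2}_f = \{\phi\in\bar H^k_f : \mathcal L_f\phi\in\bar H^k_f\}$, the multiplier estimate (\ref{multi}), and the induction hypothesis $\bar H^k_f=\bar H^k_{f'}$, pushing the $f$-dependence through $\mathcal L_f\phi$ rather than through a concrete description of the domain. If you want to fill the gap you must take that algebraic route (or add an explicit hypothesis that $f=f'$ near $\partial\mathcal O$, as is in fact the case in all the paper's applications), not the ``same boundary conditions'' shortcut.
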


\begin{corollary}\label{weyland}
Under the hypotheses of Proposition \ref{sobald}B), the eigenfunctions $e_{j,f}$ corresponding to eigenvalues $\lambda_{j,f}$ of $-\mathcal L_f$ satisfy for some $C<\infty$ depending only on $\mathcal O, d, k, U, f_{min}$, 
\begin{equation}\label{efbd}
\|e_{j,f}\|_{H^k}  \lesssim \lambda_j^{k/2} \le C j^{k/d},~ j \ge 0,
\end{equation}
which whenever $k>d/2$ implies as well
\begin{equation}\label{weylsup}
\|e_{j,f}\|_\infty \lesssim j^{\tau}~~\forall \tau>1/2, ~j \ge 0.
\end{equation}
\end{corollary}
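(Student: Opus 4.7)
The plan is to combine the eigenfunction-adapted Sobolev norm equivalence from Proposition \ref{sobald}B) with Weyl's law (\ref{langweylig}), and then upgrade to a sup-norm bound by Sobolev embedding plus interpolation.

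\textbf{Step 1 (proof of (\ref{efbd})).} For $j \ge 1$, the $L^2$-orthonormality of the eigenbasis gives $\langle e_{j,f}, e_{l,f}\rangle_{L^2} = \delta_{jl}$, so the definition (\ref{sobnorm}) collapses to
\[
\|e_{j,f}\|_{\bar H^k_f}^2 \;=\; \sum_{l \ge 1} \lambda_{l,f}^{k}\, \delta_{jl}^2 \;=\; \lambda_{j,f}^{k}.
\]
Proposition \ref{sobald}B) then yields the norm equivalence $\|e_{j,f}\|_{H^k} \simeq \|e_{j,f}\|_{\bar H^k_f} = \lambda_{j,f}^{k/2}$, so $\|e_{j,f}\|_{H^k} \lesssim \lambda_{j,f}^{k/2}$. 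For the second inequality in (\ref{efbd}), I invoke the comparison discussed after (\ref{langweylig}): since $\|f\|_\infty \lesssim \|f\|_{H^s} \le U$ by Sobolev embedding ($s > d \ge d/2$) and $f \ge f_{min}$, the variational characterisation gives $\lambda_{j,f} \lesssim \lambda_{j,1}$ uniformly in $j$, and Weyl's law (\ref{langweylig}) supplies $\lambda_{j,1} \lesssim j^{2/d}$. The case $j=0$ is handled separately (and trivially): $e_{0} \equiv 1$ satisfies $\|e_0\|_{H^k} = 1$ since $vol(\mathcal O)=1$, an $O(1)$ term absorbed into the $\lesssim$.

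\textbf{Step 2 (proof of (\ref{weylsup})).} Given $\tau > 1/2$, I choose an intermediate order $\alpha \in (d/2, d\tau]$ with $\alpha \le k$; this is possible because $k$ is an integer strictly larger than $d/2$, so $\alpha$ can be taken just above $d/2$. On a smooth bounded domain the Sobolev embedding $H^{\alpha}(\mathcal O) \hookrightarrow C(\bar{\mathcal O})$ holds, so $\|e_{j,f}\|_\infty \lesssim \|e_{j,f}\|_{H^\alpha}$. The standard interpolation inequality between $L^2$ and $H^k$ (real interpolation, applicable on $\mathcal O$),
\[
\|e_{j,f}\|_{H^\alpha} \;\lesssim\; \|e_{j,f}\|_{L^2}^{1-\alpha/k}\, \|e_{j,f}\|_{H^k}^{\alpha/k},
\]
combined with $\|e_{j,f}\|_{L^2} = 1$ and (\ref{efbd}), yields
\[
\|e_{j,f}\|_\infty \;\lesssim\; \bigl(j^{k/d}\bigr)^{\alpha/k} \;=\; j^{\alpha/d} \;\le\; j^{\tau},
\]
as required.

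\textbf{Main obstacle.} There is no substantial difficulty once Proposition \ref{sobald} is in hand; the argument is spectral bookkeeping plus a standard interpolation/embedding. The only minor technical point is making sure the fractional-order interpolation between $L^2$ and $H^k$ is available on $\mathcal O$, which is classical given that the boundary is smooth.
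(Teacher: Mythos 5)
Your proof is correct and follows essentially the same route as the paper, but you make explicit a fractional-order interpolation step that the paper elides. The paper's proof of (\ref{weylsup}) simply says it ``follows from the Sobolev imbedding''; taken literally at integer order $k$, this would only give $\|e_{j,f}\|_\infty \lesssim j^{k/d}$ with $k \ge 2$, hence an exponent $\ge 2/d$, which is not $\le \tau$ for $\tau$ close to $1/2$ in low dimensions. Your interpolation of $\|e_{j,f}\|_{H^\alpha}$ between $\|e_{j,f}\|_{L^2}=1$ and $\|e_{j,f}\|_{H^k}$ at a fractional $\alpha \in (d/2, \min(k, d\tau)]$, followed by the embedding $H^\alpha \hookrightarrow C(\bar{\mathcal O})$, is exactly the right fix and is implicit in the paper's one-liner but genuinely needed. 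Step 1 -- the computation $\|e_{j,f}\|_{\bar H^k_f}^2=\lambda_{j,f}^k$, the equivalence from Proposition \ref{sobald}B), and the comparison $\lambda_{j,f}\simeq\lambda_{j,1}$ with Weyl's law (\ref{langweylig}) -- matches the paper's argument directly.
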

\begin{proof}
By definition (\ref{sobnorm}) and (\ref{langweylig}), the result is true for the $\bar H^k_f$-norm replacing the $H^k$-norm, and since $e_{j,f} \in \bar H^k_f$, Proposition \ref{sobald} implies (\ref{efbd}), and (\ref{weylsup}) then follows from the Sobolev imbedding.
\end{proof}

\subsection{Heat equation, transition operator, and a perturbation identity}

For fixed $T>0$ let us consider solutions $v=v_{f,\phi}: (0,T] \times \mathcal O \to \R$ in $L^2$ to the heat equation
\begin{align}\label{baseheat}
\frac{\partial}{\partial t} v - \nabla \cdot (f \nabla v) &=0 ~\text{ on }  (0,T] \times \mathcal O \\
\frac{\partial v}{\partial \nu} &= 0 ~\text{ on }  (0,T] \times \partial \mathcal O \notag \\
v(0,\cdot) &= \phi ~~\text{ on } \mathcal O, \notag
\end{align}
for any initial condition satisfying $\int_{\mathcal O} \phi =0$. The unique solution of this PDE is given by
\begin{equation}\label{trans}
v_{f, \phi}(t, \cdot) = P_{t,f}(\phi) = \sum_{j \ge 1} e^{-t \lambda_{j,f}} e_{j,f} \langle e_{j,f}, \phi \rangle_{L^2},~~t>0,~~\phi \in L^2_0(\mathcal O),
\end{equation}
which also lie in $L^2_0$. We can add any fixed constant $c$ to both the initial condition $\phi$ and solution $v$, by extending the above series to include $j=0$ for $e_0=1, \lambda_0=0$. The symmetric non-negative (Prop.~\ref{translow}) fundamental solutions of the heat equation are then
\begin{equation} \label{fundamental}
p_{t,f}(x,y) = \sum_{j \ge 0} e^{-t \lambda_{j,f}} e_{j,f}(x)  e_{j,f}(y),~~x,y \in \mathcal O.
\end{equation}
These are precisely the kernels of the transition operator $P_{t,f}$ in (\ref{traninf}) and also the transition probability densities of the Markov process $(X_t: t \ge 0)$ arising from the Dirichlet form (\ref{diric}), cf., e.g.,~Sec.1.14 in \cite{BGL14}.

\subsubsection{Heat kernel estimates}

By the bounds on eigenfunctions and eigenvalues from (\ref{langweylig}), (\ref{weylsup}), the series in (\ref{fundamental}) defining $p_{t,f}$ converge in $H^k$, and by the Sobolev imbedding with $k>d/2$ then also uniformly on $\mathcal O$.

\begin{proposition}\label{transreg}
Under the hypotheses of Proposition \ref{sobald}B),  we have for any fixed $t>0$
\begin{equation}
\sup_{x \in \mathcal O} \|p_{t,f}(x,\cdot)\|_{H^k} \le c_{ub} < \infty.
\end{equation}
where $c_{ub}=c_{ub}(k, t, f_{min}, U, \mathcal O, d)<\infty$. \end{proposition}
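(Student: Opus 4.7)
The natural strategy is to exploit the spectral expansion (\ref{fundamental}) and reduce the $H^k$-estimate on $p_{t,f}(x,\cdot)$ to a convergent series of eigenvalue/eigenfunction bounds, using Proposition \ref{sobald} to pass between the $H^k$-norm and the spectrally-defined $\bar H^k_f$-norm.

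First I would split off the $j=0$ term: write
$$p_{t,f}(x,y) = 1 + v_x(y), \qquad v_x(y) := \sum_{j \ge 1} e^{-t\lambda_{j,f}} e_{j,f}(x)\, e_{j,f}(y),$$
noting that $v_x \in L^2_0$ for each fixed $x$ (by orthogonality to $e_0=1$), and that $\|1\|_{H^k} = 1$ since $vol(\mathcal O)=1$. Hence it suffices to bound $\sup_{x \in \mathcal O}\|v_x\|_{H^k}$.

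Next I would verify that $v_x \in \bar H^k_f$ with norm controllable uniformly in $x$. By definition (\ref{sobnorm}) and the fact that the $L^2$-coefficients of $v_x$ in the basis $\{e_{j,f}\}$ are $e^{-t\lambda_{j,f}} e_{j,f}(x)$,
$$\|v_x\|_{\bar H^k_f}^2 = \sum_{j \ge 1} \lambda_{j,f}^{k}\, e^{-2t\lambda_{j,f}}\, |e_{j,f}(x)|^2.$$
Now I would invoke Corollary \ref{weyland}: since $s>d$, I can pick some $k_0$ with $d/2 < k_0 \le s+1$ for the Sobolev embedding argument in (\ref{weylsup}), giving $\|e_{j,f}\|_\infty \lesssim j^{\tau}$ for, say, $\tau = 1$, uniformly in $j$ with constants depending only on $\mathcal O, d, U, f_{min}$. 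Combined with the Weyl-type bound $\lambda_{j,f} \simeq \lambda_{j,1} \simeq j^{2/d}$ (via the variational argument from Section \ref{specsec} plus (\ref{langweylig})), this yields
$$\sup_{x \in \mathcal O}\|v_x\|_{\bar H^k_f}^2 \;\lesssim\; \sum_{j \ge 1} j^{2k/d}\, j^{2\tau}\, \exp\bigl(-2tc\, j^{2/d}\bigr) \;<\; \infty,$$
where the series converges because the exponential decay in $j^{2/d}$ dominates any power of $j$; the bound depends only on $t, k, \tau, d, f_{min}, U, \mathcal O$.

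Finally, since $v_x \in \bar H^k_f$, Proposition \ref{sobald} gives $\|v_x\|_{H^k} \simeq \|v_x\|_{\bar H^k_f}$ with equivalence constants depending only on $f_{min}, U, d, k, \mathcal O$. Adding back the contribution of the constant term $1$ yields the desired uniform bound. The only mildly delicate point is that $p_{t,f}(x,\cdot)$ is not in $L^2_0$, which is why the splitting off of the invariant eigenfunction $e_0$ is the right first move; the exponential heat-kernel smoothing absorbs all polynomial growth in $j$ coming from eigenvalue and eigenfunction bounds, so no finer spectral input is needed.
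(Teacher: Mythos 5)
Your proposal is correct and uses the same essential ingredients as the paper's proof: the spectral expansion (\ref{fundamental}), the eigenvalue/eigenfunction growth bounds from Corollary \ref{weyland}, and the observation that the exponential heat-kernel factor dominates any polynomial growth in $j$. The bookkeeping differs slightly and is a touch cleaner than the paper's: the paper applies the triangle inequality directly in $H^k$ to the series, bounding each term by $\|e_j\|_{H^k}\|e_j\|_\infty \lesssim j^{k/d + \tau}$, whereas you split off the $j=0$ term so that $v_x \in L^2_0$, compute $\|v_x\|^2_{\bar H^k_f} = \sum_{j\ge 1}\lambda_{j,f}^k e^{-2t\lambda_{j,f}}|e_{j,f}(x)|^2$ exactly by Parseval, bound $|e_{j,f}(x)|$ by $\|e_{j,f}\|_\infty$, and invoke the $\bar H^k_f \simeq H^k$ norm equivalence of Proposition \ref{sobald} once at the end; this avoids the lossy $\ell^1$-triangle-inequality step in exchange for making the $L^2_0$ restriction explicit. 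Both routes are correct and give the same constant dependence.
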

\begin{proof}
Using the representation (\ref{fundamental}) and Corollary \ref{weyland} we obtain
\begin{align*}
\|p_{t,f}(x, \cdot)\|_{H^k} \le \sum_{j \ge 0}  e^{-t\lambda_j} \|e_j\|_{H^k} \|e_j\|_\infty \lesssim \sum_{j \ge 0} j^{\tau + (k/d)} e^{-c t j^{2/d}} \leq c_{ub}.
\end{align*}
\end{proof}
A further key fact is that the transition densities are bounded \textit{from below} on a convex domain $\mathcal O$. See Section \ref{aux} for the proof.
 \begin{proposition} \label{translow}
Let $\mathcal O$ be a bounded convex domain with smooth boundary and suppose $f \ge f_{min}>0$ satisfies $\|f\|_{C^\alpha} \le B$ for some even integer $\alpha > (d/2)-1$. Then we have for every $t>0$ and some positive constant $c_{lb}(t, \mathcal O, d, f_{min}, B, \alpha)>0$ that 
\begin{equation}
\inf_{x,y \in \mathcal O}p_{t,f}(x,y) \ge c_{lb}.
\end{equation}
\end{proposition}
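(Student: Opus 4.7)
The plan is to combine the joint continuity of $(x,y) \mapsto p_{t,f}(x,y)$ on the compact set $\bar{\mathcal O} \times \bar{\mathcal O}$, obtained from the spectral series (\ref{fundamental}), with a classical Aronson-type Gaussian lower bound for uniformly elliptic divergence-form parabolic operators on bounded smooth domains, and then conclude by compactness. The regularity hypothesis $\|f\|_{C^\alpha} \leq B$ with $\alpha > (d/2) - 1$ is precisely what is needed to make the summands in (\ref{fundamental}) bounded pointwise via Sobolev embedding.

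First I verify $p_{t,f}(\cdot,\cdot) \in C(\bar{\mathcal O} \times \bar{\mathcal O})$ for each fixed $t > 0$. For $d \geq 2$ the hypothesis forces $\alpha \geq 2$, so Proposition \ref{sobald}A) applies with $k = \alpha + 1$ and yields $\|e_{j,f}\|_{H^{\alpha+1}} \lesssim \lambda_{j,f}^{(\alpha+1)/2} \lesssim j^{(\alpha+1)/d}$ with constants depending only on $(f_{min}, B, \alpha, d, \mathcal O)$; the case $d=1$ is handled analogously by the base statement of Proposition \ref{sobald} (with $k=1$). Since $\alpha + 1 > d/2$, the Sobolev embedding $H^{\alpha+1}(\mathcal O) \hookrightarrow C(\bar{\mathcal O})$ then gives $\|e_{j,f}\|_\infty \lesssim j^{(\alpha+1)/d}$. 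Combined with the uniform eigenvalue lower bound $\lambda_{j,f} \gtrsim f_{min}\,j^{2/d}$ deduced from (\ref{topevar}) and (\ref{langweylig}) (with the $f$-comparison of quadratic forms), the summand in (\ref{fundamental}) is bounded by $Cj^{2(\alpha+1)/d} e^{-c t j^{2/d}}$ uniformly over $\bar{\mathcal O} \times \bar{\mathcal O}$, which is summable, so (\ref{fundamental}) converges uniformly and $p_{t,f}$ is continuous.

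Second, I invoke the Aronson-type Gaussian heat kernel lower bound for divergence-form parabolic operators on the bounded smooth convex domain $\mathcal O$ under Neumann boundary conditions: there exist $c_1, c_2 > 0$ depending only on $(d, f_{min}, B, \alpha, \mathcal O)$ such that
$$p_{t,f}(x,y) \geq c_1 t^{-d/2} \exp\bigl(-c_2 |x-y|^2 / t\bigr), \qquad x, y \in \bar{\mathcal O}.$$
Since $|x-y| \leq \operatorname{diam}(\mathcal O)$ and $t$ is fixed, this collapses to the uniform positive lower bound $c_{lb}(t, \mathcal O, d, f_{min}, B, \alpha) > 0$. An equivalent route, avoiding Aronson, is to chain local Moser-type parabolic Harnack inequalities along a finite covering of $\bar{\mathcal O}$ by interior and boundary parabolic half-cylinders, each step controlled by the uniform ellipticity constants.

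The main obstacle is upgrading the interior Aronson/Harnack bounds uniformly up to the boundary under the Neumann condition. The smoothness of $\partial \mathcal O$ permits either boundary straightening followed by an even reflection across the flattened boundary, reducing boundary points to the interior case, or a direct Moser-Harnack chaining argument on boundary half-cylinders that exploits the reflection structure of the Neumann problem. A purely qualitative shortcut combines the strong maximum principle with Hopf's lemma to give pointwise positivity of $p_{t,f}$ on $\bar{\mathcal O} \times \bar{\mathcal O}$ and then uses continuity and compactness; but extracting the parameter dependencies listed in the statement genuinely requires the quantitative Aronson/Harnack route, which is standard in the parabolic PDE literature and in which the Hölder regularity of the coefficient $f$ ensures all relevant constants depend only on the stated parameters.
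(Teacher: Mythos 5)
Your argument is correct but takes a genuinely different route from the paper's. The paper does not go through Aronson/Moser at all: it applies the abstract heat-kernel lower-bound criterion of \cite{C03} to the symmetric semigroup $e^{tE_f}$, verifying in turn a chain condition (automatic for convex domains), volume doubling, the Davies upper bound \cite{D89}, and a quantitative H\"older condition on $\varphi\mapsto \|\mathcal L_f^{\alpha/2}\varphi\|_{L^2}$; the last of these is the only non-trivial verification, and it is exactly where the $C^\alpha$ hypothesis with $\alpha>(d/2)-1$ is used, via Proposition \ref{sobald} and the Sobolev embedding to establish (\ref{LI}). Your route instead starts from the classical parabolic De Giorgi--Nash--Moser/Aronson theory, obtaining the Gaussian lower bound from an interior Harnack inequality chained through the domain, together with a boundary treatment for the Neumann condition by even reflection (or boundary Harnack). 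This is a valid alternative and is arguably the more elementary one: the Moser route needs only bounded measurable $f$ with ellipticity ratio $\|f\|_\infty/f_{min}$, so the $C^\alpha$ hypothesis would become superfluous for this proposition, whereas the paper genuinely needs it to verify condition (3.2) of \cite{C03}. What the paper's approach buys in return is that it never leaves the spectral/Dirichlet-form framework already set up in Section 3.1 (no coordinate straightening, reflection of coefficient matrices, or case distinction interior versus boundary), and reuses the same Weyl/Sobolev machinery as Propositions \ref{sobald} and \ref{transreg}. Two small points on your write-up: the $t^{-d/2}$ prefactor in the Aronson bound is correct only in the small-time regime (for large $t$ on a bounded domain the kernel tends to $1/\mathrm{vol}(\mathcal O)$), though this does not affect the fixed-$t$ conclusion; and the continuity step you give first, while in the same spirit as Proposition \ref{transreg}, is needed only to pass from an a.e.\ lower bound to the pointwise infimum over all $x,y\in\mathcal O$, so you may want to make that role explicit.
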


Using Proposition 6.3.4 in \cite{BGL14} and (\ref{topevar}), (\ref{LI}) (or by estimating the tail of the series in (\ref{fundamental}) and integrating the result $dx$) one also obtains geometric ergodicity of the diffusion process,
\begin{equation} \label{mixing}
\sup_{x \in \mathcal O}\|p_{t,f}(x,\cdot) - \mu\|_{TV} \le C e^{-\lambda_{1,f} t},~~\forall t \ge t_0>0,~~d\mu(x)=e_0(x)=1, ~x \in \mathcal O.
\end{equation}

\subsubsection{Perturbation and pseudo-linearisation identity}

In this subsection we consider two conductivities $\bar f, f' \ge f_{min}>0$ whose $C^2(\mathcal O)$-norms are bounded by a fixed constant $U$ and study the resulting difference of the action of the transition operators $P_{t,\bar f} - P_{t,f'}$ on the eigen-functions $(e_{j,\bar f}: j \ge 1) \subset H^2(\mathcal O)$ of $\mathcal L_{\bar f}$. We will use the factorisation of space and time variables in the identity $P_{t,\bar f}(e_{j,\bar f}) =  e^{-t\lambda_{j,\bar f}} e_{j,\bar f}$ 
which holds as well for the eigenblocks (with $\iota=(\iota_l)$ any finite sequence)
\begin{equation}\label{eblo}
E_{j, \bar f,\iota} = \sum_{l: \lambda_{l,\bar f}=\lambda_{j,\bar f}} e_{l, \bar f} \iota_l
\end{equation}
corresponding to the eigenvalue $\lambda_{j, \bar f}$, that is, we have
\begin{equation} \label{actblock}
P_{t,\bar f}(E_{j,\bar f,\iota}) =  e^{-t\lambda_{j,\bar f}} E_{j,\bar f, \iota},~~ j \ge 0.
\end{equation}
By (\ref{baseheat}), (\ref{trans}), the functions
$$v_j(\cdot, t)=P_{t,f'}(E_{j, \bar f,\iota})-P_{t,\bar f}(E_{j, \bar f,\iota}),~ t \in (0,T],~j \ge 1,$$ solve the inhomogeneous PDE
\begin{align}\label{line200}
\frac{\partial}{\partial t} v - \nabla \cdot (f' \nabla v) &=\bar G_j ~\text{ on }  (0,T] \times \mathcal O \\
\frac{\partial v}{\partial \nu} &= 0 ~\text{ on }  (0,T] \times \partial \mathcal O \notag \\
v(0,\cdot) &= 0 ~~\text{ on } \mathcal O \notag
\end{align}
where 
\begin{equation} \label{gee0}
\bar G_j(t) = -\nabla \cdot [(\bar f-f')\nabla P_{t,\bar f}(E_{j, \bar f,\iota})] = e^{-t\lambda_{j,\bar f}} G_j, \quad G_j:= -\nabla \cdot [(\bar f-f')\nabla E_{j, \bar f,\iota}],
\end{equation}
with  eigenvalues $\lambda_{j,\bar f}$ of $-\mathcal L_{\bar f}$. Standard semi-group arguments (Proposition 4.1.2 in \cite{L95}) imply that the solution $v$ of (\ref{line200}) can be represented by the `variation of constants' formula
\begin{equation}\label{inhom0}
v_j(\cdot, t) = \int_0^t e^{(t-s)\mathcal L_{f'}} \bar G_j(s)ds .
\end{equation}
For $(e_{k, f'}, \lambda_{k, f'})$ the eigen-pairs of $-\mathcal L_{f'}$ we thus arrive at 
\begin{align} \label{repspec}
P_{t,f'}(E_{j,\bar f, \iota})-P_{t,\bar f}(E_{j,\bar f, \iota})&=v_j(\cdot, t) =\sum_{k \ge 1} \int_0^{t}e^{-s\lambda_{j,\bar f}}  e^{-(t-s)\lambda_{k,f'}}  \langle e_{k,f'}, G_j \rangle_{L^2} e_{k,f'} ds \\
&\equiv \sum_k b_{k,j} \langle e_{k,f'}, G_j\rangle_{L^2} e_{k,f'} \notag,~~j \ge 1,
\end{align}
for coefficients 
\begin{equation} \label{tough0}
b_{k,j}= b_{k,j}(t)= \int_0^{t}e^{-s\lambda_{j,\bar f}}  e^{-(t-s)\lambda_{k,f'}} ds.
\end{equation}
We can regard (\ref{repspec}) as a spectral `pseudo-linearisation' identity for $P_{t,f'}-P_{t,\bar f}$, similar to analogous results employed to prove stability estimates in other inverse problems, e.g., \cite{MNP21}. It could also be the starting point to prove LAN-type expansions in our model as in \cite{W19}.

\subsection{Information distances and small ball probabilities}

For $(X_t: t \ge 0)$ the diffusion process (\ref{diffus}) with transition densities from (\ref{fundamental}), the Kullback-Leibler (KL-) divergence in our discrete measurement model with observation distance $D>0$ is defined as
\begin{equation}\label{KLpobs}
KL(f,f_0) = E_{f_0} \Big[\log \frac{p_{D, f_0}(X_0, X_D)}{p_{D, f}(X_0, X_D)} \Big],~~f, f_0 \in \mathcal F,
\end{equation}
where we regard the $p_{D, f}$ from (\ref{fundamental}) as joint probability densities on $\mathcal O \times \mathcal O$ (as $vol(\mathcal O)=1$).

\vspace{2pt plus 1pt minus 1pt}

In the following theorem $\|\cdot\|_{HS}$ denotes the HS norm for operators on the Hilbert space $L^2(\mathcal O)$ (or just $L^2_0(\mathcal O)$). Note further that $H^1_c \subset H^1_0$ implies $(H^1_0)^*=H^{-1} \subset (H^1_c)^*$ so the r.h.s. in (\ref{weakball}) can be bounded by $\|f-f_0\|^2_{H^{-1}}$ and then also by $\|f-f_0\|_{L^2}$.

\begin{theorem}\label{lfsball}
Let $f,f_0$ satisfy the conditions of Proposition \ref{sobald}B) for some $s>d$. Suppose $f=f_0$ outside of a compact subset $\mathcal O_0 \subset \mathcal O$. Then for any $D>0$ there exist positive constants $C_0, C_1$ depending on $D, \mathcal O, \mathcal O_0, s,d, U, f_{min}$ such that
\begin{equation} \label{weakball}
KL(f,f_0) \leq C_0 \|P_{D,f_0} - P_{D,f}\|^2_{HS} \leq C_1 \|f-f_0\|_{(H^1_c)^*}^2.
\end{equation}
\end{theorem}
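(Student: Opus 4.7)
The plan is to prove the two inequalities separately. For the first, $KL(f,f_0)\le C_0\|P_{D,f_0}-P_{D,f}\|_{HS}^2$, I would exploit that $X_0\sim\mathrm{Unif}(\mathcal O)=\mu$ has unit Lebesgue density on the volume-normalised domain, so the joint density of $(X_0,X_D)$ under $\mathbb P_{f_0}$ on $\mathcal O\times\mathcal O$ is exactly $p_{D,f_0}(x,y)$ and $KL(f,f_0)=\iint p_{D,f_0}\log(p_{D,f_0}/p_{D,f})\,dxdy$. The elementary information-theoretic bound $KL\le\chi^2$ gives $KL(f,f_0)\le\iint(p_{D,f_0}-p_{D,f})^2/p_{D,f}\,dxdy$, and Proposition \ref{translow} supplies a uniform lower bound $p_{D,f}\ge c_{lb}>0$ (depending only on the fixed constants $D,\mathcal O,d,f_{min},U$), so the integrand is at most $c_{lb}^{-1}|p_{D,f_0}-p_{D,f}|^2$. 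Since the Hilbert--Schmidt norm of an integral operator is the $L^2(\mathcal O\times\mathcal O)$ norm of its kernel, the first inequality follows with $C_0=c_{lb}^{-1}$.

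For the second inequality $\|P_{D,f_0}-P_{D,f}\|_{HS}^2\le C_1\|f-f_0\|_{(H^1_c)^*}^2$, I would start from DuHamel's identity $P_{D,f_0}-P_{D,f}=\int_0^D P_{D-s,f}(\mathcal L_{f_0}-\mathcal L_f)P_{s,f_0}\,ds$ and pass to the kernel level, integrating by parts in the spatial variable $w$. Because $f=f_0$ near $\partial\mathcal O$, the boundary term vanishes and I obtain $p_{D,f_0}(x,y)-p_{D,f}(x,y)=-\int_{\mathcal O}(f-f_0)(w)K(x,y,w)\,dw$ with $K(x,y,w):=\int_0^D\nabla_w p_{D-s,f}(x,w)\cdot\nabla_w p_{s,f_0}(w,y)\,ds$. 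Fixing a cutoff $\chi\in C_c^\infty(\mathcal O)$ with $\chi\equiv1$ on $\mathcal O_0$ and using that $\mathrm{supp}(f-f_0)\subset\mathcal O_0$, the duality pairing produces $|p_{D,f_0}(x,y)-p_{D,f}(x,y)|\le\|f-f_0\|_{(H^1_c)^*}\,\|\chi K(x,y,\cdot)\|_{H^1_w}$. Now substitute the spectral expansion $K(x,y,w)=\sum_{j,k\ge0}b_{k,j}\,e_{k,f}(x)\,e_{j,f_0}(y)\,h_{k,j}(w)$ with $h_{k,j}:=\nabla e_{k,f}\cdot\nabla e_{j,f_0}$ and $b_{k,j}$ from (\ref{tough0}); since $\{e_{k,f}(x)e_{j,f_0}(y)\}_{j,k}$ is an orthonormal system in $L^2(\mathcal O\times\mathcal O)$, Parseval converts the integrated duality bound into
\[
\|P_{D,f_0}-P_{D,f}\|_{HS}^2 \;\le\; \|f-f_0\|_{(H^1_c)^*}^2\sum_{j,k\ge0}b_{k,j}^2\,\|\chi h_{k,j}\|_{H^1}^2,
\]
reducing the task to showing that this spectral double sum is bounded by a constant depending only on $D,\mathcal O,\mathcal O_0,s,d,U,f_{min}$.

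The main obstacle is precisely the convergence of this double sum. A product-rule bound $\|\chi h_{k,j}\|_{H^1}^2\lesssim(kj)^{\alpha_d}$ is available from Corollary \ref{weyland} (giving $\|e_{j,f}\|_{H^m}\lesssim j^{m/d}$ for $m\le s+1$) together with the Sobolev embedding $H^{s+1}\hookrightarrow W^{1,\infty}$ guaranteed by the hypothesis $s>d$; however, paired with the crude estimate $b_{k,j}^2\le D^2 e^{-2D\min(\lambda_{k,f},\lambda_{j,f_0})}$ this fails to sum in dimensions $d\ge2$, since the polynomial grows in the direction along which $\min$ does not decay. My plan for closing the gap is to re-interpret $\gamma_{k,j}:=-\int(f-f_0)\nabla e_{j,f_0}\cdot\nabla e_{k,f}=\langle H_k,e_{j,f_0}\rangle$ as the $j$-th Fourier coefficient of the distribution $H_k:=\nabla\cdot[(f-f_0)\nabla e_{k,f}]$ and to apply the duality $|\langle H_k,\phi\rangle|\le\|f-f_0\|_{(H^1_c)^*}\,\|\chi\nabla\phi\cdot\nabla e_{k,f}\|_{H^1}$ with test functions $\phi\in\bar H^m_{f_0}$ drawn from the spectral Sobolev scale of (\ref{sobnorm}). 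Bounding the $H^1$-norm of the product by Sobolev algebra/embedding yields the negative-Sobolev estimate $\sum_j\lambda_{j,f_0}^{-m}\gamma_{k,j}^2\lesssim\|f-f_0\|_{(H^1_c)^*}^2\,\lambda_{k,f}^{m'}$ for suitable $m,m'$ depending on $d$. Choosing $m=2$, the sharp pointwise bound $\sup_{\lambda\ge0}b_{k,j}(\lambda)^2\lambda^2\lesssim\lambda_{k,f}^2 e^{-2D\lambda_{k,f}}$ (verified via the explicit formula $b_{k,j}=(e^{-D\lambda_{j,f_0}}-e^{-D\lambda_{k,f}})/(\lambda_{k,f}-\lambda_{j,f_0})$) then controls $\sum_j b_{k,j}^2\gamma_{k,j}^2$ by $\|f-f_0\|_{(H^1_c)^*}^2\,\lambda_{k,f}^{2+m'}e^{-2D\lambda_{k,f}}$, and Weyl's law $\lambda_{k,f}\simeq k^{2/d}$ makes the remaining sum over $k$ converge. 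Throughout, the dependence on $U$ and $s$ enters only through the constants in Corollary \ref{weyland} and the Sobolev inequalities used to bound the eigenfunction products.
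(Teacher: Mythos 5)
Your first inequality is handled correctly: $KL \le \chi^2$ together with the lower heat kernel bound (Proposition \ref{translow}) gives $KL(f,f_0)\le c_{lb}^{-1}\|p_{D,f_0}-p_{D,f}\|_{L^2(\mathcal O\times\mathcal O)}^2$, which is precisely the HS-norm squared. This is what the paper does. Your setup for the second inequality is also close to the paper's: the spectral pseudo-linearisation of $P_{D,f_0}-P_{D,f}$, the coefficients $b_{k,j}$, the reinterpretation of $\gamma_{k,j}$ as the $j$-th Fourier coefficient of $H_k=\nabla\cdot[(f-f_0)\nabla e_{k,f}]$, and the duality pairing against $\|f-f_0\|_{(H^1_c)^*}$ are all the right ingredients.

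The gap is in the claimed ``sharp pointwise bound'' $\sup_{\lambda\ge0}b_{k,j}(\lambda)^2\lambda^2\lesssim\lambda_{k,f}^2 e^{-2D\lambda_{k,f}}$. This is false. Writing $b(\lambda)=(e^{-D\lambda}-e^{-D\lambda_k})/(\lambda_k-\lambda)$ with $\lambda_k=\lambda_{k,f}$ fixed and large, the regime $\lambda\sim 1/D\ll\lambda_k$ gives $b(\lambda)\approx e^{-D\lambda}/\lambda_k$, hence $b(\lambda)^2\lambda^2\approx e^{-2D\lambda}\lambda^2/\lambda_k^2\sim\lambda_k^{-2}$; the supremum is therefore of polynomial order $\lambda_k^{-2}$, not exponential. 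Plugging this correct bound into your scheme yields $\sum_j b_{k,j}^2\gamma_{k,j}^2\lesssim\lambda_k^{-2}\|H_k\|_{\bar H^{-2}_{f_0}}^2\lesssim\lambda_k^{m'-2}\|f-f_0\|_{(H^1_c)^*}^2$ with $m'\ge 2$ (and $m'>2+d/2$ for $d>3$), so the sum over $k$ diverges in every dimension. The step $\sum_j b_{k,j}^2\gamma_{k,j}^2\le(\sup_j b_{k,j}^2\lambda_j^2)\sum_j\lambda_j^{-2}\gamma_{k,j}^2$ simply throws away too much.

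What the paper does instead is crucial: it splits the integral for $b_{k,j}$ at $D/2$ to obtain $|b_{k,j}|\le e^{-D\lambda_{k,f_0}/2}\lambda_{j,f}^{-1}+e^{-D\lambda_{j,f}/2}\lambda_{k,f_0}^{-1}$, and then treats the two pieces \emph{asymmetrically}. For the first piece the polynomial factor $\lambda_j^{-1}$ sits on the $j$-index, so one sums over $j$ first to produce $\|\nabla\cdot[(f-f_0)\nabla e_{k,f_0}]\|_{\bar H^{-2}_f}^2$, and the leftover $e^{-D\lambda_{k,f_0}}$ controls the sum over $k$. For the second piece it is the opposite: sum over $k$ first to get $\|\nabla\cdot[(f-f_0)\nabla e_{j,f}]\|_{\bar H^{-2}_{f_0}}^2$, then use $e^{-D\lambda_{j,f}}$ to sum over $j$. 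This ``switch of outer and inner index'' is made possible by the self-adjointness identity $\langle e_k,\nabla\cdot[(f-f_0)\nabla e_{j,f}]\rangle=\langle e_{j,f},\nabla\cdot[(f-f_0)\nabla e_k]\rangle$ (divergence theorem). You invoke exactly this identity to define $\gamma_{k,j}$, but then commit to one fixed order of summation (fix $k$, sum $j$, then $k$), which cannot absorb the piece $e^{-D\lambda_j/2}\lambda_k^{-1}$ of the $b_{k,j}$ bound. To repair your argument you would need to perform two complementary summations exactly as in the paper, rather than take a single supremum over the inner index.
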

\begin{proof}
Using Propositions \ref{transreg}, \ref{translow} (noting also $H^{s} \subset C^\alpha$ by the Sobolev imbedding) and standard inequalities from information theory (as at the beginning of the proof of Lemma 14 in \cite{NS17}, or see Appendix B in \cite{GV17}) one shows
\begin{equation}\label{HSred}
KL(f,f_0) \lesssim c(c_{ub}, c_{lb}) \|p_{D, f_0} - p_{D,f}\|^2_{L^2(\mathcal O \times \mathcal O)} \lesssim  \|P_{D,f_0} - P_{D,f}\|^2_{HS}.
\end{equation}
The HS-norm of an operator $A$ on any Hilbert space $H$ can be represented as
$\|A\|^2_{HS} = \sum_j \|Ae_j\|_{L^2}^2 $ where the $(e_j)$ are any orthonormal basis of $H$. In what follows we take the basis $(e_j)\equiv (e_{j,f})$ arising from the spectral decomposition of $\mathcal L_f$, and hence need to bound
\begin{equation} \label{target}
\sum_{j \ge 1}\|P_{D,f_0}(e_{j,f})-P_{D,f}(e_{j,f})\|_{L^2}^2
\end{equation}
where the HS-norms can be taken over the Hilbert space $L_0^2(\mathcal O, dx)$ as both operators have identical first eigenfunction $e_{0,f_0}=1=e_{0,f}$.
For each summand $P_{D,f_0}(e_{j,f})-P_{D,f}(e_{j,f})$ we apply the representation (\ref{repspec}) with $f'=f_0, \bar f =f$ and $\iota_l$ selecting the relevant $j$-th eigenfunction if there are multiplicities. We then write shorthand $$g_j =-\nabla \cdot [(f-f_0)\nabla e_{j, f}]$$ for $G_j$ from (\ref{gee0}) with these choices. We can bound the coefficients (\ref{tough0}) as
\begin{align*}
|b_{k,j}|&= e^{-t\lambda_{k,f_0}} \Big( \int_0^{t/2}e^{-s\lambda_{j,f}}  e^{s\lambda_{k,f_0}} ds + \int_{t/2}^t e^{-s\lambda_{j,f}}  e^{s\lambda_{k,f_0}} ds \Big) \\
&\le e^{-t\lambda_{k,f_0}/2} \lambda_{j,f}^{-1} +   e^{-t \lambda_{j,f}/2} \lambda_{k, f_0}^{-1},
\end{align*}
so that by Parseval's identity, for $0<t\le T$, and writing $(e_k, \lambda_k)=(e_{k,f_0}, \lambda_{k, f_0})$ for the remainder of the proof,
\begin{align*}
\|P_{t,f_0}(e_{j,f})-P_{t,f}(e_{j,f})\|_{L^2}^2 = \sum_k b^2_{k,j}\langle e_k, g_j \rangle^2  &\lesssim \sum_{k \ge 1} e^{-t\lambda_k}\lambda_{j,f}^{-2} \langle e_k, g_j \rangle^2 + \sum_{k \ge 1} e^{-t\lambda_{j,f}}\lambda_{k}^{-2} \langle e_k, g_j \rangle^2.
\end{align*}
Returning to (\ref{target}) we are thus left with bounding the double sum
\begin{equation} \label{pertubd}
\sum_{j \ge 1}\|P_{D,f_0}(e_j)-P_{D,f}(e_j)\|_{L^2}^2 \lesssim \sum_{j,k}  e^{-D\lambda_k}\lambda_{j,f}^{-2} \langle e_k, g_j \rangle^2 + \sum_{j, k} e^{-D\lambda_{j,f}}\lambda_{k}^{-2} \langle e_k, g_j \rangle^2 .
\end{equation}
By the divergence theorem $$\langle e_k, g_j \rangle_{L^2} = \langle e_k, \nabla \cdot [(f-f_0)\nabla e_{j,f}] \rangle_{L^2} = \langle e_{j,f},  \nabla \cdot [(f-f_0)\nabla e_k] \rangle$$ so by Parseval's identity and (\ref{sobnorm}) (with norm there well-defined also for negative $k$), the r.h.s.~in (\ref{pertubd}) is bounded by
\begin{equation}\label{symmetric}
\sum_{k}  e^{-D\lambda_k} \|\nabla \cdot [(f-f_0)\nabla e_k]\|_{\bar H_f^{-2}}^2 + \sum_{j} e^{-D\lambda_{j,f}} \|\nabla \cdot [(f-f_0)\nabla e_{j,f}]\|_{\bar H_{f_0}^{-2}}^2.
\end{equation}
In the next step we use the basic sequence space duality relationship $\bar H_f^{-2} = (\bar H_f^2)^*$. Moreover, noting $f=f_0$ outside of $\mathcal O_0$, we take a suitable smooth cut-off function $\zeta$ that equals one on $\mathcal O_0$ and is compactly supported in $\mathcal O$. Then we apply the divergence theorem in conjunction with Proposition \ref{sobald} to obtain
\begin{align*}
\| \nabla \cdot [(f-f_0)\nabla e_{j,f}]\|_{\bar H_{f_0}^{-2}} &  = \sup_{\|\psi\|_{\bar H_{f_0}^2}\le 1} \Big| \int_\mathcal O \psi \nabla \cdot [(f-f_0) \nabla  e_{j,f}] \Big| \\
& = \sup_{\|\psi\|_{\bar H_{f_0}^2}\le 1} \Big| \int_\mathcal O (f-f_0) \nabla  (\zeta \psi) \cdot \nabla e_{j,f} \Big| \\
&\le \sup_{\bar \psi \in H^2_c, \|\bar \psi\|_{H^2} \le c} \|\nabla e_{j,f} \cdot \nabla \bar \psi]\|_{H^1} \|f-f_0\|_{(H^1_c)^*} \\
& \lesssim \|f-f_0\|_{(H^1_c)^*} \sup_{\|\bar \psi\|_{H^2} \le c} \|\bar \psi\|_{H^2} \|e_{j,f}\|_{B^2}.
\end{align*}
with spaces $B^2$ as after (\ref{multi}). For $d \le 3$ we have $B^2=H^2$ and then $\|e_{j,f}\|_{B^2}\lesssim j^{2/d}$ in view of Corollary \ref{weyland} with $k=2 \le s+1$. For $d>3$ and $k=2+d/2+\eta \le s+1, \eta>0,$ we use the Sobolev embedding $H^k \subset C^2=B^2$ and again Corollary \ref{weyland} to bound $\|e_{j,f}\|_{C^2}$. In both cases the r.h.s.~in the last display is bounded by a constant multiple of $j^{c(d)} \|f-f_0\|_{(H^1_c)^*}$ for some constant $c(d)>0$. Inserting these bounds into the second summand in (\ref{symmetric}) and using (\ref{langweylig}), the series $$\sum_j j^{2c(d)} e^{-cDj^{2/d}}<\infty$$ is convergent (for $D>0$ fixed). The same estimate holds for $e_{j,f}, \lambda_{j,f}, \bar H^{-2}_{f_0}$ replaced by $e_k, \lambda_k, \bar H^{-2}_{f}$, summing the first term in (\ref{symmetric}) -- completing the proof of the theorem. 
\end{proof}

\subsection{Proofs of stability estimates}

\subsubsection{Proof of Theorem \ref{logstabthm}} \label{loggi}

Take $\phi \in C_c^\infty(\mathcal O)$ such that $\phi =1$ on $\mathcal O_0$ and $\int_\mathcal O \phi =0$ (as $\mathcal O_0$ is a compact subset of $\mathcal O$, such $\phi$ exists).  By the results from Section \ref{specsec}, the inhomogeneous elliptic PDE (\ref{aeneumann}) in Lemma \ref{zauberstab} below has unique solution 
\begin{equation} \label{ellip}
u_{f,\phi} = \mathcal L_{f}^{-1} \phi =  -\sum_{j=1}^{\infty} \lambda_{j,f}^{-1} e_{j,f} \langle e_{j,f}, \phi \rangle_{L^2(\mathcal O)}.
\end{equation}
In particular Proposition \ref{sobald} implies that $\phi \in \bar H^{2}_{f}$  and that $u_{f,\phi}$ is bounded in $\bar H^{4}_f \subset H^{3}$. The same arguments apply to $f_0$ replacing $f$. Now Lemma \ref{zauberstab} implies
\begin{equation}
\|f-f_0\|_{L^2(\mathcal O)}  \lesssim \|f_0\|_{C^1} \|u_{f, \phi} - u_{f_0, \phi}\|_{H^2(\mathcal O)} \leq C \|u_{f,\phi} - u_{f_0,\phi}\|_{L^2}^{1/3}
\end{equation}
for finite constant $C=C(\|u_{f,\phi}\|_{H_f^3},\|u_{f_0,\phi}\|_{H^3})\le C(U)$, where we have also used the standard interpolation for $H^2$-norms (p.44 in \cite{LM72}). We now estimate the right hand side in the last display. As $\phi \in L^2$ we have for any $J \in \mathbb N$ that
\begin{align*}
\big\|u_{f, \phi} - \sum_{j \le J} (-\lambda_{j,f}^{-1}) e_{j,f} \langle e_{j,f}, \phi \rangle_{L^2} \big\|^2_{L^2} & \le \sum_{j>J} \lambda_{j,f}^{-2}\langle e_{j,f}, \phi \rangle_{L^2}^2 \le C_{\phi,f_{min},U}  J^{-c(d)},
\end{align*}
for $c(d)=4/d$, using also (\ref{langweylig}), and similarly for $f=f_0$. By the triangle inequality
\begin{equation}\label{decompoJ}
\|u_{f, \phi} - u_{f_0, \phi}\|_{L^2} \le  \Big\|\sum_{j \le J} \lambda_{j,f}^{-1} e_{j,f} \langle e_{j,f}, \phi \rangle_{L^2} - \sum_{j \le J} \lambda_{j,f_0}^{-1} e_{j,f_0} \langle e_{j,f_0}, \phi \rangle_{L^2} \Big\|_{L^2} + 2C_{\phi,f_{min}, U}J^{-c(d)}.
\end{equation}
Let us further define `truncated' transition operators 
$$P_{D,f,J}(\phi) = \sum_{j \le J} e^{-D\lambda_{j,f}} e_{j,f} \langle e_{j,f}, \phi \rangle_{L^2},~~~\mu_{j,f} = e^{-D\lambda_{j,f}},~\phi \in L^2_0,$$
which, just as in the display above (\ref{decompoJ}) and in view of (\ref{langweylig}), satisfy the estimate
$$\|P_{D,f} - P_{D,f,J}\|_{L^2 \to L^2} \le e^{-\bar cJ^{2/d}},~~\bar c = \bar c(D,U,f_{min})>0,$$ and the same is true for $f_0$ replacing $f$. The operators $P_{D,f,J}$ are self-adjoint on $L^2_0(\mathcal O)$ and by what precedes and (\ref{langweylig}), (\ref{topevar}), the union of their spectra is contained in $$\big[\min_{f,f_0} \mu_{J,f}, \max_{f,f_0} \mu_{1,f}\big] \subset \big[e^{-c' J^{2/d}},e^{-Df_{min}/p_\mathcal O}\big],~c'=c'(D,U,f_{min})>0.$$
We can employ a cut-off function and construct smooth $\kappa_J$ compactly supported on $(e^{-c' J^{2/d}}/2,1)$ such that $$\kappa_J (z)= -\frac{D}{\log z},~ \text{ on } \Big[\min_{f,f_0} \mu_{J,f}, \max_{f,f_0} \mu_{1,f}\Big].$$ Then since $\lambda_j^{-1} = \kappa_J(e^{-D\lambda_j}) = \kappa_J (\mu_j)$ on the last interval, we can write, using the notation of functional calculus,
\begin{align*}
&\Big\|\sum_{j \le J} \lambda_{j,f}^{-1} e_{j,f} \langle e_{j,f}, \phi \rangle_{L^2} - \sum_{j \le J} \lambda_{j,f_0}^{-1} e_{j,f_0} \langle e_{j,f_0}, \phi \rangle_{L^2} \Big\|_{L^2} \\
&\lesssim \|\kappa_J(P_{D, f, J}) - \kappa_J(P_{D, f_0, J})\|_{L^2 \to L^2} \\
& \lesssim \|\kappa_J\|_{B^1_{\infty 1}(\R)} \|P_{D, f,J}- P_{D, f_0,J}\|_{L^2 \to L^2}  \lesssim e^{cJ^{2/d}} \|P_{D, f} - P_{D,f_0}\|_{L^2 \to L^2} + e^{-\bar cJ^{2/d}} 
\end{align*}
where we have used Lemma 3 in \cite{Kol21} for the self-adjoint operators $P_{D,f, J}, P_{D,f_0, J}$ on $L^2_0$ and the bound $\|\kappa_J\|_{B^1_{\infty 1}(\R)} \lesssim e^{cJ^{2/d}}$ (using results in Sec.~4.3 in \cite{GN16}).  Combining all that precedes, we obtain the overall estimate
$$\|f-f_0\|^3_{L^2(\mathcal O)} \lesssim e^{cJ^{2/d}} \|P_{D, f} - P_{D,f_0}\|_{L^2 \to L^2} + e^{-\bar c J^{2/d}} + J^{-c(d)}$$ 
where $J \in \mathbb N$ was arbitrary. Choosing $J$ such that $$J^{2/d}= \frac{1}{2c}\log \frac{1}{\|P_{D, f} - P_{D,f_0}\|_{L^2 \to L^2}} $$ (we can increase $2c$ if necessary to ensure $J \in \mathbb N$) implies for some $\delta'=\delta'(c,\bar c)>0$ that
\begin{equation*}
\|f-f_0\|^3_{L^2(\mathcal O)} \lesssim \log \Big(\frac{1}{\|P_{D, f} - P_{D,f_0}\|_{L^2 \to L^2}} \Big)^{-\delta} + \|P_{D, f} - P_{D,f_0}\|^{\delta'}_{L^2 \to L^2},~~\delta = c(d)d/2.
\end{equation*}
As the $\|f-f_0\|_{L^2} \le 2U$ are uniformly bounded, we can absorb the second term into the first after adjusting constants, so the stability estimate is proved, and the injectivity assertion of the theorem follows directly from it.

\subsubsection{Proof of Theorem \ref{stabop}}

For eigenblocks $E_{1,f_0,\iota} \in \bar H^2_{f_0}$ from (\ref{eblo}), Proposition \ref{sobald} gives $$\|E_{1,f_0,\iota}\|_{H^2} \lesssim \|E_{1,f_0,\iota}\|_{\bar H^2_{f_0}} = |\iota| \lambda_{1,f_0}<\infty,~~\text{where }|\iota|^2=\sum_l \iota_l^2.$$ Then, using the representation (\ref{repspec}) with choices $\bar f=f_0, f'=f$ and Proposition \ref{sobald},
\begin{align}\label{newstab}
\|P_{D,f} -P_{D,f_0}\|^2_{H^2 \to H^2} &\gtrsim \|P_{D,f}(E_{1,f_0,\iota})-P_{D,f_0}(E_{1,f_0,\iota})\|^2_{\bar H_f^2} \notag \\
&= \sum_k \lambda_{k,f}^2 |b_{k,1}|^2 |\langle G, e_{k,f}\rangle|^2,
\end{align}
where $G=\nabla \cdot [(f-f_0)\nabla E_{1,f_0,\iota}]$ and $b_{k,1} = \int_0^t e^{-s\lambda_{1,f_0}} e^{-(t-s)\lambda_{k, f}}ds$. We can write \begin{align*}
b_{k,1}&=e^{-t\lambda_{k,f}} \frac{e^{-t(\lambda_{1,f_0}-\lambda_{k,f})}-1}{\lambda_{k,f} - \lambda_{1,f_0}} = t \frac{e^{-t \lambda_{1,f_0}} - e^{-t\lambda_{k,f}}}{t(\lambda_{k,f} - \lambda_{1,f_0})} = t e^{\xi(\lambda_{1, f_0}, \lambda_{k,f})}
\end{align*}
for some mean values $\xi(\lambda_{1, f_0}, \lambda_{k,f})$ in the interval $[-t \lambda_{1,f_0}, -t \lambda_{k,f}]$ arising from the mean value theorem applied to the exponential map. This remains true in the degenerate case where $\lambda_{1,f_0}=\lambda_{k,f}$ as then $b_{1,k} = t e^{-t\lambda_{1, f_0}}$. 

Now recalling the distribution of the eigenvalues from (\ref{langweylig}) we see that for $k \le K$ with $K$ fixed, the last displayed exponential is bounded below by a fixed constant depending on $K,d$, while for large values of $k$, the last but one term in the last display is of order $1/\lambda_{k,f}$ for $t$ fixed. Hence we have for all $k$, and some $C=C(t, d, \mathcal O, f_{\min},U)$,
\begin{equation}
|b_{k,1}| \ge C \lambda_{k,f}^{-1}.
\end{equation}
Combining this estimate with (\ref{newstab}) and Parseval's identity gives 
\begin{equation} \label{halfway}
\|P_{D,f} -P_{D,f_0}\|_{H^2 \to H^2}\gtrsim \|G\|_{L^2} = \|\nabla \cdot [(f-f_0)\nabla E_{1,f_0,\iota}]\|_{L^2}.
\end{equation}
The theorem then follows from Lemma \ref{transport} with $u_0 = E_{1,f_0,\iota}$ which satisfies (\ref{buddy}) by hypothesis (\ref{sunnyside}) and is supremum-norm bounded by (\ref{weylsup}).

\subsubsection{Stability of a transport operator}

We now give a stability lemma for the operator $$T(h)= \nabla \cdot (h \nabla u_0), h \in C^1,$$ for appropriate choices of $u_0$. It  features regularly in stability estimates for elliptic PDEs, see Chapter 2 in \cite{N23} for references. 

\begin{condition}\label{falco} Let $u_0 \in H^2(\mathcal O)$ be a function such that $\sup_{x \in \mathcal O_0}|u_0(x)| \le u<\infty$ and
	\begin{equation} \label{buddy}
			\frac{1}{2}\Delta u_0(x) + \mu |\nabla u_0(x)|^2 \ge c_0>0,~a.e.~ x \in \mathcal O_0,
	\end{equation}
for some compact subset $\mathcal O_0$ of $\mathcal O$. 
\end{condition}

\begin{lemma}\label{transport}
For $u_0$ as in Condition \ref{falco} and any $h \in C^1$ that vanishes on $\mathcal O \setminus \mathcal O_0$, the operator $T(h)$ satisfies $\|\nabla\cdot (h \nabla u_0)\|_{L^2(\mathcal O)} \ge \underline c \|h\|_{L^2(\mathcal O)}$ for a constant $\underline c = \underline c(u,c_0, \mu)>0$.
\end{lemma}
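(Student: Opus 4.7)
\textbf{Proof proposal for Lemma \ref{transport}.}

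The plan is to use an exponential weight trick that converts the pointwise positivity hypothesis (\ref{buddy}) into a coercivity estimate for the bilinear pairing $\langle T(h), h\, w\rangle_{L^2}$, where $w=e^{-2\mu u_0}$ with the same $\mu$ appearing in Condition \ref{falco}. Expand $T(h)=\nabla h\cdot\nabla u_0+h\,\Delta u_0$ and compute
\[
\int_\mathcal{O} T(h)\,h\,e^{-2\mu u_0}\,dx.
\]
Because $h$ (hence $h^2$) is compactly supported in $\mathcal{O}_0\subset\mathcal{O}$, no boundary terms appear, and $u_0\in H^2$ is sufficient regularity to carry out integration by parts. The key algebraic identity is
\[
-\int_\mathcal{O} h\,e^{-2\mu u_0}\,\nabla h\cdot\nabla u_0\,dx = -\tfrac{1}{2}\int_\mathcal{O} e^{-2\mu u_0}\,\nabla(h^2)\cdot\nabla u_0\,dx = \tfrac{1}{2}\int_\mathcal{O} h^2\,\nabla\cdot(e^{-2\mu u_0}\nabla u_0)\,dx,
\]
and expanding the divergence yields the weighted coercivity
\[
\int_\mathcal{O} T(h)\,h\,e^{-2\mu u_0}\,dx = \int_\mathcal{O} h^2\left(\tfrac{1}{2}\Delta u_0+\mu|\nabla u_0|^2\right)e^{-2\mu u_0}\,dx.
\]
This is precisely where the weight $\mu$ has been engineered so that the $\mu|\nabla u_0|^2$ absorbs what would otherwise be an inconvenient $-\mu|\nabla u_0|^2$ contribution coming from differentiating the exponential.

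Now apply hypothesis (\ref{buddy}) pointwise on $\mathcal{O}_0$ (where $h$ is supported), together with the upper bound $|u_0|\le u$ which yields $e^{-2\mu u_0}\ge e^{-2\mu u}$, to obtain
\[
\int_\mathcal{O} T(h)\,h\,e^{-2\mu u_0}\,dx \;\ge\; c_0\,e^{-2\mu u}\,\|h\|_{L^2}^2.
\]
On the other hand, Cauchy--Schwarz and the upper bound $e^{-2\mu u_0}\le e^{2\mu u}$ give
\[
\int_\mathcal{O} T(h)\,h\,e^{-2\mu u_0}\,dx \;\le\; \|T(h)\|_{L^2}\,\|h\,e^{-2\mu u_0}\|_{L^2} \;\le\; e^{2\mu u}\,\|T(h)\|_{L^2}\,\|h\|_{L^2}.
\]
Combining the two inequalities yields $\|T(h)\|_{L^2}\ge c_0\,e^{-4\mu u}\,\|h\|_{L^2}$, which is the claim with $\underline c = c_0\,e^{-4\mu u}$.

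There is no genuine obstacle here: the only delicate point is ensuring that the integration by parts on $\int e^{-2\mu u_0}\nabla(h^2)\cdot\nabla u_0$ is justified under the stated regularity $u_0\in H^2(\mathcal{O})$, $h\in C^1_c(\mathcal{O}_0)$, but this is standard since $h^2$ is a compactly supported $C^1$ test function and $e^{-2\mu u_0}\nabla u_0\in H^1(\mathcal{O};\R^d)$ by the chain and product rules in Sobolev spaces (boundedness of $u_0$ ensures $e^{-2\mu u_0}$ is a multiplier on $H^1$). Once the identity is in hand, the rest is bookkeeping, and the quantitative dependence $\underline c = c_0 e^{-4\mu u}$ falls out transparently from the two pointwise bounds on the weight.
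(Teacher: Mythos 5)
Your proof is correct and takes essentially the same approach as the paper's: both use the exponential weight trick (the paper's auxiliary function $v = e^{-\mu u_0}h$ is the same as your testing $T(h)$ against $h e^{-2\mu u_0}$), derive the identity $\int_{\mathcal O} T(h)\,h\,e^{-2\mu u_0} = \int_{\mathcal O} h^2 e^{-2\mu u_0}\big(\tfrac12\Delta u_0 + \mu|\nabla u_0|^2\big)$ by integration by parts, and conclude by Cauchy--Schwarz with the same explicit constant $\underline{c} = c_0 e^{-4\mu u}$.
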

\begin{proof}The divergence theorem applied to any $v \in H^2(\mathcal O)$ vanishing at $\partial \mathcal O$  gives $\langle \Delta u_0, v^2 \rangle_{L^2} + \frac{1}{2} \langle \nabla u_0, \nabla (v^2) \rangle_{L^2} = \frac{1}{2}\langle \Delta u_0, v^2 \rangle_{L^2}.$ For $v=e^{-\mu u_0}h$ with $\mu>0$ from (\ref{buddy})
		\begin{equation*}
		\frac{1}{2} \int_{\mathcal O}  \nabla (v^2) \cdot \nabla u_0 =- \int_{\mathcal O} \mu |\nabla u_0|^2 v^2 + \int_\mathcal O v e^{-\mu u_0}\nabla h \cdot \nabla u_0,
		\end{equation*}
		so that by the Cauchy-Schwarz inequality
		\begin{align} \label{keylb}
		 \left|\int_{\mathcal O}\Big(\frac{1}{2}\Delta u_0+\mu |\nabla u_0|^2\Big)v^2 \right| &= \left|\langle (\Delta u_0 + \mu |\nabla u_0|^2), v^2 \rangle_{L^2} + \frac{1}{2} \langle \nabla u_0, \nabla (v^2)\rangle_{L^2}\right| \notag \\
		&= \left|\langle h \Delta u_0 + \nabla h \cdot \nabla u_0, h e^{-2\mu u_0} \rangle_{L^2} \right| \le \bar \mu \|\nabla \cdot (h \nabla u_0)\|_{L^2}  \|h\|_{L^2} 
		\end{align}
		for $\bar \mu=\exp(2\mu \|u_0\|_{\infty})$.  Now by (\ref{buddy}) and since $h=0=v$ on $\mathcal O \setminus \mathcal O_0$ by hypothesis we have
		$$ \left|\int_{\mathcal O}\Big(\frac{1}{2}\Delta u_0+\mu |\nabla u_0|^2\Big)v^2 \right| =  \left|\int_{\mathcal O_0}\Big(\frac{1}{2}\Delta u_0+\mu |\nabla u_0|^2\Big)v^2 \right|  \ge c_0 \int_{\mathcal O_0} v^2$$ and fusing also (\ref{keylb}) we deduce $\|\nabla \cdot (h \nabla u_0)\|_{L^2}\|h\|_{L^2}\geq c'\|v\|^2_{L^2(\mathcal O_0)}\ge \underline c \|h\|^2_{L^2(\mathcal O)}$.
\end{proof}

\begin{lemma} \label{zauberstab}
Let $\mathcal O_0$ be any compact subset of a bounded smooth domain $\mathcal O$ and suppose that $f_1, f_2$ are two $C^2$-diffusivities $f_i \ge f_{min} >0, i=1,2,$ such that $f_1=f_2$ on $\mathcal O \setminus \mathcal O_0$. Suppose for some $\phi \in C^\infty(\mathcal O) \cap L^2_0(\mathcal O)$ s.t.~$\phi \ge 1$ on $\mathcal O_0$, functions $u_{f_i}, i=1,2,$ solve
\begin{align}\label{aeneumann}
\nabla \cdot (f_i \nabla  u_{f_i}) &= \phi ~~\text{ on } \mathcal O \\
\frac{\partial u_{f_i}}{\partial \nu} & = 0 ~~\text{ on } \partial \mathcal O. \notag
\end{align}
Then we have for some constant $C=C(\|\phi\|_\infty, \|f_1\|_{C^1})>0$ that
\begin{equation}\label{zaubineq}
\|f_1 - f_2\|_{L^2(\mathcal O)} \le C \|f_2\|_{C^1} \|u_{f_1} - u_{f_2}\|_{H^2}.
\end{equation}
\end{lemma}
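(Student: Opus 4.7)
The plan is to subtract the two elliptic equations, recognise the difference as a transport-type equation in $h=f_1-f_2$, and then apply Lemma \ref{transport} with $u_0=u_{f_1}$ after verifying Condition \ref{falco} for $u_{f_1}$ from the PDE it solves.

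First, subtracting (\ref{aeneumann}) for $i=1$ from that for $i=2$ (or vice versa) and adding and subtracting $\nabla \cdot (f_2 \nabla u_{f_1})$ gives the identity
\begin{equation*}
\nabla \cdot \big((f_1-f_2)\nabla u_{f_1}\big) \;=\; -\nabla \cdot \big(f_2 \nabla (u_{f_1}-u_{f_2})\big) \quad\text{on } \mathcal O.
\end{equation*}
Using the product rule, the $L^2$-norm of the right hand side is bounded by $C\|f_2\|_{C^1}\|u_{f_1}-u_{f_2}\|_{H^2}$, so it suffices to prove the lower bound $\|\nabla \cdot (h\nabla u_{f_1})\|_{L^2(\mathcal O)}\gtrsim \|h\|_{L^2(\mathcal O)}$ for $h=f_1-f_2$, which vanishes on $\mathcal O \setminus \mathcal O_0$ by hypothesis.

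Second, I verify that $u_0=u_{f_1}$ satisfies Condition \ref{falco} on $\mathcal O_0$. Expanding (\ref{aeneumann}) for $i=1$ gives $f_1\Delta u_{f_1}+\nabla f_1 \cdot \nabla u_{f_1}=\phi$, and since $\phi\ge 1$ on $\mathcal O_0$ and $f_{min}\le f_1\le \|f_1\|_\infty$, we obtain
\begin{equation*}
\tfrac{1}{2}\Delta u_{f_1} \;=\; \tfrac{\phi}{2f_1}-\tfrac{\nabla f_1 \cdot \nabla u_{f_1}}{2f_1} \;\ge\; \tfrac{1}{2\|f_1\|_\infty}-\tfrac{|\nabla f_1|\,|\nabla u_{f_1}|}{2 f_{min}} \quad\text{on }\mathcal O_0.
\end{equation*}
For any $\mu>0$, Young's inequality yields
\begin{equation*}
\tfrac{|\nabla f_1|\,|\nabla u_{f_1}|}{2 f_{min}} \;\le\; \tfrac{\|f_1\|_{C^1}^2}{8\mu f_{min}^2}+\mu|\nabla u_{f_1}|^2,
\end{equation*}
so choosing $\mu$ large enough (depending on $\|f_1\|_{C^1},f_{min},\|f_1\|_\infty$) gives $\tfrac{1}{2}\Delta u_{f_1}(x)+\mu|\nabla u_{f_1}(x)|^2\ge c_0>0$ on $\mathcal O_0$, with $c_0=c_0(\|f_1\|_{C^1},f_{min},\|f_1\|_\infty)$. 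Moreover, since $\phi\in C^\infty(\mathcal O)\cap L^2_0$, the results of Section \ref{specsec} (in particular Proposition \ref{sobald}) place $u_{f_1}$ in $\bar H^{4}_{f_1}\subset H^3\subset C(\bar{\mathcal O})$, giving a bound $\|u_{f_1}\|_\infty\le u$ depending on $\|\phi\|_\infty$ and the data.

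Third, with Condition \ref{falco} verified for $u_0=u_{f_1}$ and $h=f_1-f_2$ supported in $\mathcal O_0$, Lemma \ref{transport} produces a constant $\underline c>0$ (depending only on $\|u_{f_1}\|_\infty$, $c_0$, $\mu$) such that $\|\nabla\cdot(h\nabla u_{f_1})\|_{L^2}\ge \underline c\|h\|_{L^2}$. Combining with the first step yields (\ref{zaubineq}) with the stated dependence of $C$.

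The main delicate point is the Young-inequality absorption step that verifies Condition \ref{falco}: one has no a priori sign control on $\Delta u_{f_1}$ itself (only the combination $f_1\Delta u_{f_1}+\nabla f_1\cdot \nabla u_{f_1}$ is controlled through $\phi$), and the freedom to choose $\mu$ in Condition \ref{falco} is precisely what lets the quadratic term $\mu|\nabla u_{f_1}|^2$ absorb the indefinite first-order contribution while preserving the positive constant coming from the lower bound $\phi\ge 1$.
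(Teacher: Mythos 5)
Your proposal is correct and follows the same overall strategy as the paper's proof: the same algebraic decomposition $\nabla\cdot(h\nabla u_{f_1}) = \nabla\cdot(f_2\nabla(u_{f_2}-u_{f_1}))$, the same upper bound by $\|f_2\|_{C^1}\|u_{f_1}-u_{f_2}\|_{H^2}$, and the same reduction to Lemma \ref{transport} with $u_0=u_{f_1}$. The one place you genuinely depart from the paper is in how Condition \ref{falco} is verified for $u_{f_1}$: the paper argues by dichotomy (from $1\le f_1\Delta u_{f_1}+\nabla f_1\cdot\nabla u_{f_1}$, either $\Delta u_{f_1}$ or $|\nabla u_{f_1}|$ is pointwise bounded below) and then invokes a $C^{2,\alpha}$ Schauder estimate to bound $\|\Delta u_{f_1}\|_\infty$ so that the indefinite case can be absorbed by large $\mu$; you instead isolate $\tfrac12\Delta u_{f_1}$ from the PDE and absorb the first-order term via Young's inequality. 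Your route is slightly more streamlined because the absorption happens before any pointwise control on $\Delta u_{f_1}$ is needed, so the Schauder estimate becomes unnecessary for this step (you still need a bound on $\|u_{f_1}\|_{L^\infty(\mathcal O_0)}$ for the constant $\underline c$ in Lemma \ref{transport}, and for that either a Sobolev embedding of $\bar H^k_{f_1}$ or interior regularity suffices). One small inaccuracy: under the stated hypothesis $f_1\in C^2$, Proposition \ref{sobald}(A) gives $u_{f_1}\in\bar H^3_{f_1}\subset H^3$ (take $k=3$, needing $\|f\|_{C^{k-1}}=\|f\|_{C^2}$ bounded), not $\bar H^4_{f_1}$; this does not affect the argument since you only need a sup bound on $u_{f_1}$ over the compact set $\mathcal O_0$.
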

\begin{proof}
Let us write $h=f_1-f_2$. By (\ref{aeneumann}), we have on $\mathcal O$
		\begin{equation}\label{div-diff}
		\begin{split}
		\nabla \cdot (h \nabla u_{f_1}) &= \nabla\cdot  (f_1 \nabla u_{f_1}) - \nabla\cdot (f_2 \nabla u_{f_2}) - \nabla\cdot  (f_2 \nabla (u_{f_1}-u_{f_2})) \\
		&= \nabla \cdot (f_2 \nabla (u_{f_2}-u_{f_1})).
		\end{split}
		\end{equation}
We can upper bound the $\|\cdot\|_{L^2}$-norm of r.h.s. by 
		\begin{align}
		\|\nabla \cdot (f_2 \nabla (u_{f_2}-u_{f_1}))\|_{L^2}&\leq \|\nabla f_2\|_{\infty}\|u_{f_2}-u_{f_1}\|_{H^1} +\|f_2\|_{\infty}\|u_{f_2}-u_{f_1}\|_{H^2}\notag \\
		&\leq 2\|f_2\|_{C^1}\|u_{f_2}-u_{f_1}\|_{H^2}. \label{diff-ub}
		\end{align}
To lower bound the left hand side of (\ref{diff-ub}) we apply Lemma \ref{transport} with $u_0 = u_{f_1}$ to $\|\nabla \cdot (h \nabla u_{f_1})\|_{L^2}$. The hypothesis on $\phi$ implies $1 \le  f_1  \Delta u_{f_1} + \nabla f_1 \cdot \nabla u_{f_1}$ on $\mathcal O_0$, so that either $\Delta u_{f_1}(x) \ge 1/2\|f_1\|_\infty$ or $|\nabla u_{f_1}(x)|^2 \ge (1/2\|f_1\|_{C^1})^2$ for $x \in \mathcal O_0$. Since $\|u_{f_1}\|_\infty + \|\Delta u_{f_1}\|_\infty\lesssim \|u_{f_1}\|_{C^2} \lesssim c(\|f_1\|_{C^1})$ by a $C^\alpha$-regularity estimate (e.g., Thm 4.3.4 in \cite{T83}) for solutions of (\ref{aeneumann}) with $f_1 \in C^1$ this implies (\ref{buddy}) and by Lemma \ref{transport}  the result.
\end{proof}

\subsection{Minimax estimation of the transition operator $P_{D,f}$}

\subsubsection{Operator norm convergence}

In this subsection we construct explicit estimator $\hat P_D$ for the transition operator $P_{D,f}$ and prove Theorem \ref{opmx}. While it is possible to take $\hat P_D$ self-adjoint, this will not be required here. 

For $J \in \mathbb N$  take $E_J \equiv \{e_{j,1}: 0 \le j \le J-1\}$ the eigenfunctions of the Neumann Laplacian $\mathcal L_1$ on $\mathcal O$ (including $e_0=1$) and regard $E_J \simeq \mathbb R^J$ as a normed space equipped with the Euclidean norm via Parseval's identity for $L^2(\mathcal O)$. Given the observations $X_0, X_D, \dots, X_{ND}$ define a $J\times J$ matrix by 
\begin{equation}
\hat {\mathbf P}_{j,j'} = \frac{1}{N} \sum_{i=1}^N e_{j,1}(X_{(i-1)D}) e_{j',1}(X_{iD}),~~~~~0 \le j,j' \le J-1.
\end{equation}
Via the injection of $E_J \simeq \mathbb R^J$ into $L^2(\mathcal O)$ we can regard $\hat {\mathbf P}_{J}$ as a bounded linear operator $\hat P_{J}$ on $L^2$ described by the action
\begin{align}\label{hatp}
\langle \hat P e_{j,1}, e_{j',1} \rangle_{L^2} &\equiv \hat {\mathbf P}_{j,j'},~~~~~0 \le j,j' \le J-1 \notag \\
 &=0~~~~~~~~~~~~~~~~~~~~~~~~~\text{ otherwise }.
\end{align}
Similarly the transition operator $P_{D,f}$ induces a matrix ${\mathbf P}_{D, f, J}$ via
\begin{align*}
{\mathbf P}_{D,f, j,j'} =  \langle P_{D,f} e_{j,1}, e_{j',1} \rangle_{L^2} &= \mathbb E_f e_{j,1}(X_0) e_{j',1}(X_D), ~~~0 \le j,j' \le J-1, \\
&= 0~~~~~~~~~~~~~~~~~~~~~~~~~\text{ otherwise,}
\end{align*}
which equals the expectation $\mathbb E_f\hat {\mathbf P}_{D, J}  = {\mathbf P}_{D,f, J}$ under the law $\mathbb P_f$ of $(X_t: t \ge 0)$ started in stationarity $X_0 \sim Unif(\mathcal O)$. The latter matrix corresponds to the operator on $L^2$ arising from the composition $\pi_{E_J} P_{D,f}$ where $\pi_{E_J}$ describes the projection onto $E_J$ -- note that $E_J$ are not the eigen-spaces of $P_{D,f}$ unless $f=1$. To obtain an estimate for the approximation error from $E_J$, note first that by Proposition \ref{sobald} and (\ref{sobnorm}), (\ref{langweylig}), for any $\phi \in L^2_0$ s.t.~$\|\phi\|_{L^2} \le 1$,
\begin{equation}
\|P_{D,f}(\phi)\|^2_{\bar H_{1}^{s+1}} \lesssim \|P_{D,f}(\phi)\|^2_{\bar H_f^{s+1}}  = \sum_{j \ge 1} e^{-2D \lambda_{j,f}} \lambda^{s+1}_{j,f} \langle \phi, e_{j,f} \rangle_{L^2}^2 \le B'
\end{equation}
for some $B'=B'(U)<\infty$ since $\|f\|_{H^s} \le U$ by hypothesis. Therefore, using again (\ref{sobnorm}), (\ref{langweylig}) and Parseval's identity
\begin{align}\label{bias}
\|\pi_{E_J} P_{D,f} - P_{D,f}\|_{L^2 \to L^2} & = \sup_{\phi \in L^2_0, \|\phi\|_{L^2} = 1} \|\pi_{E_J} P_{D,f}(\phi) - P_{D,f}(\phi)\|_{L^2}  \\
\leq \sup_{\|\psi\|_{\bar H_1^{s+1}} \le B'} \|\pi_{E_J}\psi - \psi\|_{L^2}& \le  \sup_{\|\psi\|_{\bar H_1^{s+1}} \le B'} \sqrt{\sum_{j>J} \lambda_{j,1}^{-s-1} \lambda_{j,1}^{s+1} \langle \psi, e_{j,1} \rangle_{L^2}^2}  \lesssim J^{-(s+1)/d}. \notag
\end{align}
To bound the operator norms on approximation spaces $E_J \simeq \R^J$ we use a standard covering argument in finite dimensional spaces (e.g., the proof of Lemma 1.1 in \cite{CP11}) to the effect that
$$\|\hat P_J - \pi_{E_J}P_{D,f}\|_{L^2 \to L^2}=\|\hat P_J - \pi_{E_J}P_{D,f}\|_{E_J \to E_J} \le
2\max_{\mathbf{u,v} \in D_J(1/4)}\big|\mathbf {u}^T(\hat {\mathbf P}_J - \mathbf P_{D,f})\mathbf{v}\big|$$ where $D_J(1/4)$ is a discrete $1/4$-net of unit vectors (i.e., $\|\mathbf{u}\|_{\R^J}=1$) covering the unit sphere of $\R^J$ of cardinality at most $card(D_J(1/4))\leq A^J$ for some $A>0$, see, e.g., \cite{GN16}, p.373. By a union bound and for $g(x,y)=u(x)v(y)$ with $u=\sum_j \mathbf {u_j} e_j$, $v=\sum_j \mathbf {v_j} e_j$, we obtain 
\begin{align*}
& \mathbb P_f \left(\|\hat P_J - \pi_{E_J}P_{D,f}\|_{L^2 \to L^2} > c\sqrt{\frac{J}{N}} \right) \\
 & \leq A^J \max_{\mathbf{u,v} \in D_J(1/4)}  \mathbb P_f \left(\big|\mathbf {u}^T(\hat {\mathbf P}_J - \mathbf P_{D,f})\mathbf{v}\big| > c\sqrt{\frac{J}{4N}} \right) \\
&= A^J \max_{g} \mathbb P_f \left(\big|\sum_{i=1}^N g(X_{(i-1)D}, X_{iD}) - \mathbb E_f g(X_{(i-1)D}, X_{iD})  \big| > c\sqrt{JN/4} \right)
\end{align*}
We can apply the concentration inequality Proposition \ref{bernie} below with $h=g-\int_\mathcal P g p_{D,f}$ an element of the Hilbert space $L^2_0(P_{D,f})$ from (\ref{hnull}) below. We have, using also Proposition \ref{transreg}, $$\|h\|_{L^2(P_{D,f})} \lesssim \|h\|_{L^2(\mathcal O \times \mathcal O, dx \otimes dx)} \le C$$  as well as $\|h\|_\infty \le H \lesssim J^{2\tau +1}$ in view of the estimate
$$\|u\|_\infty \le \|\mathbf u\|_{E_J} \sqrt{\sum_{j \le J} \|e_j\|_\infty^2} \lesssim J^{\tau +1/2},~~ \tau>1/2,$$ where we have used (\ref{weylsup}). In this way we obtain overall:  

\begin{proposition}\label{opconc}
Let $D>0$ and suppose $X_0, X_D, \dots, X_{ND}$ arise from the diffusion (\ref{diffus}) started at $X_0 \sim Unif(\mathcal O)$ on a bounded smooth convex domain $\mathcal O$ with $f: \mathcal O \to [f_{min}, \infty), f_{min}>0,$ s.t.~$\|f\|_{H^s} + \|f\|_{C^2} \le U, s>d$. Let $J>0$ be s.t. $J^{\bar \tau} \lesssim \sqrt N$ for some $\bar \tau >5/2$. Then for all $c>0$ we can choose $C=C(U,D)>0$ such that \begin{equation}\label{oprate}
\mathbb P_f\left(\|\hat P_J - P_{D,f}\|_{L^2 \to L^2} \ge C \Big(\sqrt{\frac{J}{N}} + J^{-(s+1)/d}\Big) \right) \le e^{-cJ}.
\end{equation}
\end{proposition}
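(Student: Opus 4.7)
The statement combines a deterministic approximation error against $P_{D,f}$ (bias) with a stochastic fluctuation around the expectation $\pi_{E_J}P_{D,f}$ (variance). The plan is to split these via the triangle inequality
$$\|\hat P_J - P_{D,f}\|_{L^2\to L^2} \le \|\hat P_J - \pi_{E_J}P_{D,f}\|_{L^2\to L^2} + \|\pi_{E_J}P_{D,f} - P_{D,f}\|_{L^2\to L^2},$$
control the second summand by a constant multiple of $J^{-(s+1)/d}$ directly from (\ref{bias}), and devote the rest of the argument to showing
$$\mathbb{P}_f\big(\|\hat P_J - \pi_{E_J}P_{D,f}\|_{L^2\to L^2} \ge C\sqrt{J/N}\big)\le e^{-cJ}.$$

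For the variance term I would continue the covering argument already developed above: it reduces matters, via a union bound over the discrete $1/4$-net $D_J(1/4)$ of cardinality $\le A^J$, to the per-pair probability
$$\mathbb{P}_f\Big(\big|{\textstyle\sum_{i=1}^N}[g(X_{(i-1)D},X_{iD}) - \mathbb{E}_f g]\big| > c\sqrt{JN}/2\Big),$$
where $g(x,y)=u(x)v(y)$ with $u,v\in E_J$ and $\|\mathbf u\|_{\R^J}=\|\mathbf v\|_{\R^J}=1$. The centred function $h:=g-\mathbb{E}_f g$ satisfies $\|h\|_{L^2(P_{D,f})}\lesssim 1$ (by boundedness of $p_{D,f}$, Proposition \ref{transreg}) and $\|h\|_\infty\lesssim J^{2\tau+1}$ for any $\tau>1/2$, via $\|u\|_\infty\le \|\mathbf u\|_{\R^J}\sqrt{\sum_{j\le J}\|e_{j,1}\|_\infty^2}\lesssim J^{\tau+1/2}$ from (\ref{weylsup}).

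The Markov chain $(X_{iD})_{i\ge 0}$ is geometrically ergodic by (\ref{mixing}) with a uniform-in-$f$ spectral gap $\ge f_{\min}/p_{\mathcal O}$ from (\ref{topevar}). I would then invoke Proposition \ref{bernie}, a Bernstein-type concentration inequality for functionals of two consecutive states of such a chain, applied to $h(X_{(i-1)D},X_{iD})$ with $\sigma^2\asymp 1$ and $H\asymp J^{2\tau+1}$. With $x=c\sqrt{JN}/2$ it yields
$$\mathbb{P}_f\big(|{\textstyle\sum_i} h|>x\big) \le 2\exp\big(-c_0\min\{c^2 J/4,\; (c/2) N^{1/2}J^{-(2\tau+1/2)}\}\big).$$
Choosing $\tau>1/2$ close enough to $1/2$ that $2\tau+3/2\le\bar\tau$, the hypothesis $J^{\bar\tau}\lesssim \sqrt N$ (which requires precisely $\bar\tau>5/2$) forces $N^{1/2}J^{-(2\tau+1/2)}\gtrsim J$, so the per-element bound is $\le e^{-c_1 J}$ for $c_1=c_1(c,D,U)$. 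A union bound over the $A^{2J}$ pairs $(\mathbf u,\mathbf v)\in D_J(1/4)^2$ absorbs the combinatorial factor once $c_1$ (hence $C$) is taken large enough, yielding the required $\le e^{-cJ}$.

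\textbf{Main obstacle.} The genuinely non-trivial input is Proposition \ref{bernie}: a Bernstein inequality for the pair functional $h(X_{(i-1)D},X_{iD})$ of the reversible diffusion chain, with explicit and sharp dependence on both $\|h\|_\infty$ and $\|h\|_{L^2(P_{D,f})}$, and with constants \emph{uniform} over $\{f:\|f\|_{H^s}+\|f\|_{C^2}\le U,\,f\ge f_{\min}\}$. The required uniformity is available from the $f$-uniform spectral gap (\ref{topevar}) and the $f$-uniform heat-kernel bounds (Propositions \ref{transreg}, \ref{translow}), so the technical burden reduces to verifying that an existing Bernstein inequality for geometrically ergodic reversible chains (of Paulin/Adamczak type) applies to the two-step functional with the above norm inputs; the remaining calibration of $\tau$ against $\bar\tau$ is a bookkeeping exercise that explains the threshold $\bar\tau>5/2$ in the hypothesis.
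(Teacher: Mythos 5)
Your proposal is correct and follows essentially the same route as the paper: triangle-inequality split into the deterministic bias $\|\pi_{E_J}P_{D,f}-P_{D,f}\|_{L^2\to L^2}\lesssim J^{-(s+1)/d}$ from (\ref{bias}), a $1/4$-net covering argument on $E_J\simeq\R^J$ with $\le A^J$-cardinality union bound, and an application of Proposition \ref{bernie} to $h=g-\int g\,p_{D,f}$ with $\|h\|_{L^2(P_{D,f})}\lesssim 1$ and $\|h\|_\infty\lesssim J^{2\tau+1}$ via (\ref{weylsup}), the calibration $2\tau+3/2\le\bar\tau$ against the hypothesis $J^{\bar\tau}\lesssim\sqrt N$ explaining the threshold $\bar\tau>5/2$ exactly as in the paper.
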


In particular for $s>2d-1$ we can choose $J \simeq N^{d/(2s+2+d)}$ to prove Theorem \ref{opmx}. A bound on the $H^2 \to H^2$-operator norms follows as well: Since the imbedding $H^2 \subset L^2$ is continuous and since $\|v\|_{H^2} \simeq \|v\|_{\bar H^2_1} \lesssim J^{2/d} \|v\|_{L^2}$ whenever $v \in E_J$, we have 
$$\|\hat P_D - \pi_{E_J}P_{D,f}\|_{H^2 \to H^2} \lesssim J^{2/d} \|\hat P_D - \pi_{E_J}P_{D,f}\|_{L^2 \to L^2}$$
and as in (\ref{bias}) and by Proposition \ref{sobald} the approximation errors scale like
$$\|\pi_{E_J} P_{D,f} - P_{D,f}\|_{H^2 \to H^2} \lesssim \sup_{\|\psi\|_{\bar H_1^{s+1}} \le B'} \|\pi_{E_J}\psi - \psi\|_{H^2} \lesssim J^{(s-1)/d}.$$ 
\begin{corollary}\label{opconc2}
In the setting of Proposition \ref{opconc} we also have 
\begin{equation}\label{oprate2}
\mathbb P_f\left(\|\hat P_D - P_{D,f}\|_{H^2 \to H^2} \ge C \Big(J^{2/d}\sqrt{\frac{J}{N}} + J^{(s-1)/d} \Big)\right) \le e^{-cJ},
\end{equation}
\end{corollary}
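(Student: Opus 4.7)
My plan is to upgrade Proposition~\ref{opconc} from the $L^2 \to L^2$ to the $H^2 \to H^2$ operator norm via a standard bias-variance split, exploiting that the relevant approximation operators have range in the finite-dimensional eigenspace $E_J$. I would start from the triangle inequality
$$\|\hat P_J - P_{D,f}\|_{H^2 \to H^2} \leq \|\hat P_J - \pi_{E_J} P_{D,f}\|_{H^2 \to H^2} + \|\pi_{E_J} P_{D,f} - P_{D,f}\|_{H^2 \to H^2},$$
the first term being the stochastic fluctuation (with range in $E_J$) and the second a deterministic approximation error.

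The stochastic term I would handle via a Bernstein-type inverse estimate on $E_J$. For $v \in E_J$ the spectral definition (\ref{sobnorm}), Weyl's law (\ref{langweylig}) and the norm equivalence $\|\cdot\|_{H^2} \simeq \|\cdot\|_{\bar H^2_1}$ from Proposition~\ref{sobald} together yield $\|v\|^2_{\bar H^2_1} \le \lambda_{J,1}^2 \|v\|^2_{L^2} \lesssim J^{4/d}\|v\|^2_{L^2}$, hence $\|v\|_{H^2} \lesssim J^{2/d}\|v\|_{L^2}$. Combined with the embedding $H^2 \subset L^2$ on the input side, this gives $\|T\|_{H^2 \to H^2} \lesssim J^{2/d}\|T\|_{L^2 \to L^2}$ for any operator $T$ with range in $E_J$. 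The concentration step behind Proposition~\ref{opconc} actually controls the pure stochastic fluctuation $\|\hat P_J - \pi_{E_J}P_{D,f}\|_{L^2 \to L^2} \lesssim \sqrt{J/N}$ off an event of probability at most $e^{-cJ}$ (the $J^{-(s+1)/d}$ term in (\ref{oprate}) is pure bias, controlled separately via (\ref{bias})), so the stochastic contribution to the $H^2 \to H^2$ norm is $\lesssim J^{2/d}\sqrt{J/N}$ with the same probability.

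For the approximation term I would rerun the spectral tail computation of (\ref{bias}) but measure the residual in $\bar H^2_1$ rather than $L^2$: exactly as in (\ref{bias}) one has $\|P_{D,f}\phi\|_{\bar H^{s+1}_1} \lesssim \|\phi\|_{L^2}$, and then
$$\|\pi_{E_J}\psi - \psi\|^2_{\bar H^2_1} = \sum_{j > J} \lambda_{j,1}^{-(s-1)}\lambda_{j,1}^{s+1}\langle \psi, e_{j,1}\rangle_{L^2}^2 \leq \lambda_{J,1}^{-(s-1)}\|\psi\|^2_{\bar H^{s+1}_1} \lesssim J^{-2(s-1)/d}\|\psi\|^2_{\bar H^{s+1}_1},$$
which, after one last application of Proposition~\ref{sobald}, yields $\|\pi_{E_J}P_{D,f} - P_{D,f}\|_{H^2 \to H^2} \lesssim J^{-(s-1)/d}$. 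Adding this to the stochastic bound gives the claimed inequality; the balancing choice $J \simeq N^{d/(2s+2+d)}$ equates the two exponents and reproduces the rate $N^{-(s-1)/(2s+2+d)}$ announced in (\ref{alpharate}) with $\alpha=2$. The only structural idea in the argument is the inverse estimate, and even this is essentially the remark that $-\mathcal L_1|_{E_J}$ has spectrum bounded by $\lambda_{J,1} \lesssim J^{2/d}$; everything else is a repetition of calculations already carried out for Proposition~\ref{opconc}, so I anticipate no genuine obstacle.
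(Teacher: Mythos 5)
Your argument is correct and reproduces the paper's proof almost verbatim: the paper uses exactly the same triangle decomposition into a stochastic term with range in $E_J$ and a deterministic projection bias, the same inverse (Bernstein) estimate $\|v\|_{H^2}\simeq\|v\|_{\bar H^2_1}\lesssim J^{2/d}\|v\|_{L^2}$ for $v\in E_J$ together with the continuous embedding $H^2\subset L^2$, and the same spectral-tail computation against the $\bar H^{s+1}_1$-ball that contains $P_{D,f}(B_{L^2})$. Your careful computation also reveals that the exponent in the paper's display (\ref{oprate2}) (and in the surrounding text) should read $J^{-(s-1)/d}$ rather than $J^{(s-1)/d}$ — a sign typo — since only with the negative exponent does the bias term go to zero and balance against the stochastic term at the rate $N^{-(s-1)/(2s+2+d)}$ claimed in (\ref{alpharate}) for $\alpha=2$ and matched by the lower bound of Theorem \ref{opmxlb}.
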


\subsubsection{A concentration inequality for ergodic averages}

Consider the discrete Markov chain $X_D, \dots, X_{ND}$ arising from sampling the diffusion (\ref{diffus}) started in stationarity $X_0 \sim Unif(\mathcal O)$. The transition operator of this chain is $P_{D,f}$ from (\ref{trans}), with spectrum $1 > e^{-D\lambda_{1,f}} \ge e^{-D\lambda_{2,f}} \ge \dots$ and the first spectral gap is bounded as
\begin{equation} \label{gap}
1 -  e^{-D\lambda_{1,f}} \ge r_D
\end{equation}
in view of (\ref{topevar}) for some $r_D=r(D,f_{min}, p_\mathcal O, U)>0$. We initially establish concentration bounds for additive functionals
$$\sum_{i=1, i \text{ odd }}^N h(X_{(i-1)D}, X_{iD}),~~\text{ and } \sum_{i=1, i \text{ even }}^N h(X_{(i-1)D}, X_{iD}),~~~h: \mathcal O \times \mathcal O \to \mathbb R,$$
of bivariate Markov chains in $\mathcal O \times \mathcal O$ arising from $$(X_0, X_D), (X_{2D}, X_{3D}), (X_{4D}, X_{5D}), \dots,~~\text{ and }(X_D, X_{2D}), (X_{3D}, X_{4D}), (X_{5D}, X_{6D}), \dots,$$ respectively. By a union bound this will give concentration inequalities for ergodic averages $\sum_{i=1}^N h(X_{(i-1)D}, X_{iD})$ along all indices $i$, see (\ref{conc}) below.

The transition operators $P_{D,f}'$ of the new bivariate Markov chains have invariant measure $p_{D,f}(x,y)$ on $\mathcal O \times \mathcal O$. If we define 
\begin{equation} \label{hnull}
L^2_0(P_{D,f}) := \left \{h:  \int_\mathcal O \int_\mathcal O h(x,y) p_{D,f}(x,y) dx dy = 0 \right\}
\end{equation}
 then one shows
\begin{equation}
\sup_{h: \int h p_{D,f} =0}\frac{\|P_{D,f}'[h]\|_{L^2(P_{D,f})}}{\|h\|_{L^2(P_{D,f})}} \le \sup_{h: \int h =0} \frac{\|P_{D,f}[h]\|_{L^2}}{\|h\|_{L^2}} \le e^{-D\lambda_{1,f}}
\end{equation}
by a basic application of Jensen's inequality (cf.~Lemma 24 in \cite{NS17}), and by (\ref{gap}). By the variational characterisation of eigenvalues and (\ref{gap}) this implies that the first spectral gap $\rho_D$ of $P_{D,f}'$ is also bounded as
\begin{equation}\label{gapp}
\rho_D = 1 -   e^{-D\lambda_{1,f}} \ge r_D.
\end{equation}
We deduce from Theorem 3.1 in \cite{P15} that for any $h \in L^2_0(P_{D,f})$ we have the variance bound
\begin{equation} \label{varbd}
Var_f\Big(\frac{1}{N}\sum_{i=1, i \text{ odd }}^N h(X_{(i-1)D}, X_{iD})\Big) \le \frac{2}{N\rho_D} \|h\|_{L^2(P_{D,f})}^2 \le \frac{1}{Nr_D} \|h\|^2_{L^2(P_{D,f})}
\end{equation}
where we have also used (\ref{gapp}). Similarly, requiring in addition $\|h\|_\infty \le H$, Theorem 3.3 and eq.~(3.21) in \cite{P15} imply the concentration inequality. 
\begin{equation} \label{concodd}
\mathbb P_f \Big(\sum_{i=1, i \text{ odd }}^N h(X_{(i-1)D}, X_{iD}) \ge x\Big) \le 2 \exp\left\{-\frac{x^2 r_D}{4N\|h\|^2_{L^2(P_{D,f})} + 10x H} \right\},~~~x >0.
\end{equation}
The same inequality applies to the even indices $i$, so that by a union bound we obtain:

\begin{proposition}\label{bernie}
Let $h \in L^2_0(P_{D,f})$ be s.t.~$\|h\|_\infty \le H$, and let $X_0, X_D, \dots, X_{ND}$ be sampled discretely at observation distance $D>0$ from the diffusion $(X_t: t \ge 0)$ from (\ref{diffus}) with $f_{min} \le f \le U<\infty$. Then for some constant $c=c(r,D)$ and all $x>0$ we have
\begin{equation} \label{conc}
\mathbb P_f\Big(\sum_{i=1}^N h(X_{(i-1)D}, X_{iD}) \ge x\Big) \le 4 \exp\left\{-c\frac{x^2}{N\|h\|^2_{L^2(P_{D,f})} + x H} \right\}.
\end{equation}
\end{proposition}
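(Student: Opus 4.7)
The plan is to derive the stated Bernstein-type inequality from the already-established concentration bound (\ref{concodd}) for the odd-indexed subsum together with the analogous bound for the even-indexed subsum, combined by a union bound. Specifically, I would split
\begin{equation*}
S_N := \sum_{i=1}^N h(X_{(i-1)D}, X_{iD}) = S_N^{\mathrm{odd}} + S_N^{\mathrm{even}},
\end{equation*}
so that $\{S_N \ge x\} \subset \{S_N^{\mathrm{odd}} \ge x/2\} \cup \{S_N^{\mathrm{even}} \ge x/2\}$, and treat each subsum separately.

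For $S_N^{\mathrm{odd}}$ I would invoke precisely (\ref{concodd}), which was obtained from Paulin's Bernstein inequality (Theorem 3.3 and eq.~(3.21) in \cite{P15}) applied to the bivariate Markov chain $(X_{2kD}, X_{(2k+1)D})_{k \ge 0}$ on $\mathcal{O}\times\mathcal{O}$ with transition operator $P_{D,f}'$ and invariant density $p_{D,f}(x,y)$. The hypotheses required by Paulin (reversibility, spectral gap, $L^2(P_{D,f})$-centering, supremum-norm control) have all been verified in the paragraphs immediately preceding the proposition statement: the Jensen-inequality argument of Lemma 24 in \cite{NS17} yields the contraction
\begin{equation*}
\sup_{h: \int h p_{D,f} = 0} \frac{\|P_{D,f}'[h]\|_{L^2(P_{D,f})}}{\|h\|_{L^2(P_{D,f})}} \le e^{-D\lambda_{1,f}},
\end{equation*}
and hence the spectral gap bound $\rho_D \ge r_D$ in (\ref{gapp}), while $h \in L^2_0(P_{D,f})$ and $\|h\|_\infty \le H$ are assumed. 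The same reasoning applies verbatim to the bivariate chain $(X_{(2k+1)D}, X_{(2k+2)D})_{k \ge 0}$ which gives the twin bound on $S_N^{\mathrm{even}}$.

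Finally, substituting $x/2$ into (\ref{concodd}) gives, for each of the two subsums,
\begin{equation*}
\mathbb{P}_f\bigl(S_N^{\mathrm{odd}} \ge x/2\bigr) \le 2 \exp\!\left\{-\frac{(x/2)^2 r_D}{4N\|h\|^2_{L^2(P_{D,f})} + 5xH}\right\},
\end{equation*}
and identically for $S_N^{\mathrm{even}}$; absorbing the numerical constants into a single $c = c(r_D, D) = c(r, D) > 0$ and summing the two bounds via the union bound yields the factor of $4$ and the form (\ref{conc}) claimed. The only non-routine point in this entire argument is the verification that the bivariate chain inherits the spectral gap of the original chain, but this has already been carried out in the excerpt via the Jensen estimate, so no genuine obstacle remains; the proof is essentially a careful bookkeeping exercise combining (\ref{gapp}), (\ref{concodd}), and a union bound.
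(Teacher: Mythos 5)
Your proof matches the paper's argument exactly: split $\sum_{i=1}^N h(X_{(i-1)D}, X_{iD})$ into odd- and even-indexed subsums, apply the Paulin-type bound (\ref{concodd}) to each (the even case being verbatim), and combine with a union bound at level $x/2$, which yields the prefactor $4$ and absorbs the numerical constants into $c=c(r,D)$. Nothing to add.
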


\subsubsection{Proof of the minimax lower bound Theorem \ref{opmxlb}}

Given the analytical estimates obtained so far, the proof follows ideas of the lower bound Theorem 10 of \cite{NvdGW20} and we sketch here only the necessary modifications. Let us take the same set of functions $(f_m:m =1, \dots, M), f_0=1,$ from (4.17) in \cite{NvdGW20} and consider only $j$ large enough in that construction such that all the wavelets featuring there are contained inside of the compact subset $\mathcal O_0$ of the `smoothed' $d$-dimensional hypercube $\mathcal O \equiv \mathcal O_{m,w}$ from (\ref{om}) for $m,w$ from Theorem \ref{cylinderth}B). In particular we can choose $j$ so large that  $\|f_m - 1\|_\infty < \kappa$ for the $\kappa$ from Theorem \ref{cylinderth}B). We apply Theorem 6.3.2 in \cite{GN16} (taking also note of (6.99) there to obtain an `in probability version' of the lower bound) as in Step VII of the proof of Theorem 10 in \cite{NvdGW20}, noting that in our setting we can control the KL-divergences $KL(f_m, f_0) \lesssim \|f_m -f_0\|_{H^{-1}}$ via Theorem \ref{lfsball} and the imbedding $(H^1(\mathcal O))^* \subset H^{-1}(\mathcal O)$. The result will thus follow if we can show that the transition operators induced by the $P_{D,f_m}$'s are appropriately separated for the $H^2$-operator norms. Using the inequality (\ref{halfway}) we have
$$\|P_{D,f_m} - P_{D,f_{m'}}\|_{H^2 \to H^2} \gtrsim \|\nabla \cdot [(f_m-f_{m'}) \nabla e_{1,f_{m'}}]\|_{L^2},~~1 \le m, m' \le M,$$ 
where we note that on our `smoothed' cylinder, the eigenfunctions $e_{1,f_{m'}}$ are all simple thanks to Theorem \ref{cylinderth}B). To proceed we need to lower bound the $L^2$-norms of the r.h.s.~of (4.19) in \cite{NvdGW20}, with $u_{f_{m'}}$ there replaced by our $e_{1,f_{m'}}$. As will be shown in the proof of Proposition \ref{cyllapsimp}, the first eigenfunction $e_{1,1}$ of $\Delta$ on $[0,1]^{d-1} \times (0,w)$ has all partial derivatives equal to zero except  with respect to one, say the first, variable, and that partial derivative cannot vanish on $\mathcal O_0$. In view of (\ref{c2ef}), (\ref{intpolef}) this implies that the corresponding eigenfunction $e_{1, f_m'}$ on $\mathcal O$ has a partial derivative for the first variable that is strictly positive while the other partial derivatives are bounded (in fact can be made arbitrarily close to zero). One can then easily adapt the steps V and VI in the proof of Theorem 10 in \cite{NvdGW20} (with $\varepsilon^{-2}$ there equal to our $N$) to establish, for all $N$ large enough, the required bound
$\|\nabla \cdot [(f_m-f_{m'}) \nabla e_{1,f_{m'}}]\|_{L^2} \gtrsim N^{-(s-1)/(2s+2+d)}.$

\subsection{Bayesian contraction results}

\subsubsection{Results for general priors}

In this subsection we follow general ideas from Bayesian nonparametrics \cite{GV17} and specifically in our diffusion context adapt the results from \cite{NS17} to our multi-dimensional setting to obtain a contraction theorem for posteriors arising from general possibly $N$-dependent priors $\Pi$. Recall the information distance $KL$ from (\ref{KLpobs}) on parameter spaces $\mathcal F \subset C^2(\mathcal O) \cap \{f \ge f_{min}\}, f_{min}>0$.

\begin{lemma}\label{elbo}
For $\delta>0$ define $$B_\delta = \Big\{f \in \mathcal F: KL(f, f_0) \le \delta^2,~~Var_{f_0}\Big(\log \frac{p_{D,f}(X_0, X_D)}{p_{D,f}(X_0,X_D)} \Big) \le 2 \delta^2 \Big\}.$$ Then for any probability measure $\nu$ on $B_\delta$, any $c>0$ and $\rho_D \in [0,r_D]$  from (\ref{gapp}),
\begin{equation*}
\mathbb P_{f_0}\left(\int_{B_\delta}\prod_{i=1}^N \frac{p_{D,f}(X_{(i-1)D}, X_{iD})}{p_{D, f_0}(X_{(i-1)D}, X_{iD})} d\nu(f) \le \exp\{-(1+c)N\delta^2\}\right) \le \frac{6 (1+\rho_D)}{c^1(1-\rho_D)N\delta^2}.
\end{equation*}
\end{lemma}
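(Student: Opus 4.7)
The plan is to combine Jensen's inequality with a Chebyshev deviation bound, where the variance is controlled via the spectral-gap estimates for the pair Markov chain developed around \eqref{gapp}. First I would apply Jensen to the concave logarithm, using that $\nu$ is a probability measure on $B_\delta$, to obtain the pointwise lower bound
\begin{equation*}
\log \int_{B_\delta} \prod_{i=1}^N \frac{p_{D,f}(X_{(i-1)D}, X_{iD})}{p_{D,f_0}(X_{(i-1)D}, X_{iD})} d\nu(f) \ge \int_{B_\delta} \sum_{i=1}^N \log \frac{p_{D,f}(X_{(i-1)D}, X_{iD})}{p_{D,f_0}(X_{(i-1)D}, X_{iD})} d\nu(f) =: U_N,
\end{equation*}
so it suffices to bound $\P_{f_0}(U_N \le -(1+c)N\delta^2)$. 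Next, by Fubini, $U_N = \sum_{i=1}^N G(Y_i)$ with $Y_i = (X_{(i-1)D}, X_{iD})$ and $G(x,y) = \int_{B_\delta} \log(p_{D,f}(x,y)/p_{D,f_0}(x,y))\, d\nu(f)$. Under $\P_{f_0}$ the pairs $Y_i$ are stationary with joint density $p_{D,f_0}$ (since $X_0 \sim Unif(\mathcal O)$ and $vol(\mathcal O)=1$), so the first defining property of $B_\delta$ gives $\E_{f_0}[U_N] = -N\int_{B_\delta} KL(f, f_0)\, d\nu(f) \ge -N\delta^2$. Hence $\{U_N \le -(1+c)N\delta^2\} \subseteq \{|U_N - \E_{f_0} U_N| \ge cN\delta^2\}$, and Chebyshev reduces the problem to bounding $\var_{f_0}(U_N)$.

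For the variance, I would first invoke Jensen once more in the variable $f$ to dominate the centered function $G_0 = G - \E_{f_0}[G]$ pointwise by $G_0(x,y)^2 \le \int_{B_\delta} (\log(p_{D,f}(x,y)/p_{D,f_0}(x,y)) + KL(f, f_0))^2\, d\nu(f)$. Integrating against $p_{D,f_0}$ and invoking the second defining condition of $B_\delta$ yields
\begin{equation*}
\|G_0\|_{L^2(p_{D,f_0})}^2 \le \int_{B_\delta} \var_{f_0}\!\left(\log \tfrac{p_{D,f}(X_0, X_D)}{p_{D,f_0}(X_0, X_D)}\right) d\nu(f) \le 2\delta^2.
\end{equation*}
I would then split $\sum_{i=1}^N G_0(Y_i)$ into its odd- and even-indexed subsums, each of which runs over a genuine stationary Markov chain on $\mathcal O \times \mathcal O$ with invariant density $p_{D,f_0}$ and spectral gap at least $\rho_D$ (by the non-self-adjoint Jensen argument recorded just before \eqref{gapp}). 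Applying the ergodic variance bound \eqref{varbd} from Theorem 3.1 of \cite{P15} to each subsum, and combining via $(a+b)^2 \le 2(a^2 + b^2)$, produces an estimate of the form $\var_{f_0}(U_N) \lesssim N \delta^2 (1+\rho_D)/(1-\rho_D)$ with explicit constants, which inserted into Chebyshev gives the claimed inequality.

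The main obstacle I anticipate is the variance step: one must verify both that the integrated centered function $G_0$ inherits $L^2$-control from the individual log-likelihood ratios (handled cleanly by Jensen in $f$ and the $2\delta^2$ bound) and that Paulin's inequality applies with the correct gap for the pair chain (handled by the argument around \eqref{gapp} which bounds the non-self-adjoint pair-chain transition operator by the univariate self-adjoint one). Everything else is routine Fubini and Chebyshev, and the numerical constant $6$ in the conclusion is what one obtains by tracking the factor $2$ from the odd/even split, the factor $2$ from the $B_\delta$ variance hypothesis, and the absolute constants in \eqref{varbd}.
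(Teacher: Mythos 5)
Your proposal is correct and tracks the same argument the paper defers to (Lemma 25 of [NS17], with the variance estimate swapped for \eqref{varbd}): Jensen on the logarithm and in $f$, Fubini to reduce to an additive functional of the bivariate pair chain, the expectation bound from the KL half of $B_\delta$, Chebyshev, and the odd/even split with Paulin's spectral-gap variance bound. Aside from the usual bookkeeping of absolute constants (and the paper's own typographical slips in the lemma statement, e.g.\ $p_{D,f}/p_{D,f}$ and the exponent $c^1$), nothing is missing.
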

\begin{proof}
The proof is the same as the one Lemma 25 in \cite{NS17}, ignoring the term involving invariant measures $\mu_{\sigma, b}$ there as in our case $\mu_f = \mu_{f_0} =const$ for all $f$. The key variance estimate in that lemma can then be replaced by our (\ref{varbd}) with $h=\log \frac{p_{D,f}(X_0, X_D)}{p_{D,f_0}(X_0,X_D)}$. 
\end{proof}

\begin{theorem}\label{notagain}
Let $\Pi=\Pi_N$ be a sequence of priors on $\mathcal F$ and suppose for $f_0 \in \mathcal F$, some sequence $\delta_N \to 0$ such that $\sqrt N \delta_N \to \infty$ and constant $A>0$ we have
\begin{equation}\label{smball}
\Pi_N(B_{\delta_N}) \ge e^{-A N \delta_N^2}.
\end{equation}
Suppose further for a sequence of subsets $\mathcal F_N \subset \mathcal F$ and constant $B>A+2$ we have 
\begin{equation}\label{except}
\Pi_N(\mathcal F \setminus \mathcal F_N) \le e^{-B N \delta_N^2}
\end{equation}
and that there exists tests $\Psi_N = \Psi(X_0, \dots, X_{ND})$ and a sequence $\bar \delta_N \to 0$ such that 
\begin{equation}\label{tests}
\mathbb E_{f_0}\Psi_N \to_{N \to \infty} 0,~~~\sup_{f \in \mathcal F_N, d(f,f_0)> \bar \delta_N} \mathbb E_{f}[1-\Psi_N] \le e^{-B N \delta_N^2},
\end{equation}
where $d$ is some distance function on $\mathcal F$.
Then we have for $0<b<B-A-2$ that
\begin{equation}\label{contract}
\Pi\big(\mathcal F_N \cap \{f: d(f, f_0) \le \bar \delta_N\} |X_0, \dots, X_{ND}\big) = 1 - O_{\mathbb P_{f_0}}(e^{-bN \delta_N^2}).
\end{equation}
\end{theorem}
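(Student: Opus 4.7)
The plan is a standard Ghosal--Ghosh--van der Vaart style contraction argument, adapted to the Markovian sampling model via the evidence lower bound already prepared in Lemma \ref{elbo}. Write the posterior (\ref{posterior}) as $\Pi(\cdot \mid X_0,\dots,X_{ND}) = \mathcal N/\mathcal D$ where
\begin{equation*}
\mathcal N(B) = \int_B \prod_{i=1}^N \frac{p_{D,f}(X_{(i-1)D},X_{iD})}{p_{D,f_0}(X_{(i-1)D},X_{iD})} d\Pi(f), \qquad \mathcal D = \mathcal N(\mathcal F).
\end{equation*}
The first step is to bound $\mathcal D$ from below. Restricting the integral to $B_{\delta_N}$, conditioning the prior to this set, and applying Lemma \ref{elbo} with the conditional measure $\nu = \Pi_N(\cdot \cap B_{\delta_N})/\Pi_N(B_{\delta_N})$, together with the prior small-ball bound (\ref{smball}), we obtain with $\mathbb P_{f_0}$-probability tending to one
\begin{equation*}
\mathcal D \ge \Pi_N(B_{\delta_N}) e^{-(1+c)N\delta_N^2} \ge e^{-(A + 1 + c)N\delta_N^2}
\end{equation*}
for any fixed small $c>0$; the probability of the exceptional set is controlled by $1/(N\delta_N^2) \to 0$ thanks to the assumption $\sqrt N \delta_N \to \infty$.

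The second step is to bound the numerator $\mathcal N(\{d(f,f_0)>\bar\delta_N\})$ from above. Split it as
\begin{equation*}
\mathcal N(\{d(f,f_0)>\bar\delta_N\}) \le \mathcal N(\mathcal F\setminus\mathcal F_N) + (1-\Psi_N)\,\mathcal N(\mathcal F_N \cap \{d(f,f_0)>\bar\delta_N\}) + \Psi_N \cdot \mathcal D.
\end{equation*}
Taking $\mathbb E_{f_0}$ and using Fubini together with the fact that the likelihood ratio has $\mathbb E_{f_0}$-expectation equal to one under the stationary Markov law (since $p_{D,f}$ are densities on $\mathcal O\times\mathcal O$ with respect to the invariant product measure), the first piece is at most $\Pi_N(\mathcal F\setminus\mathcal F_N)\le e^{-BN\delta_N^2}$ by (\ref{except}). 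For the second piece, Fubini followed by the type-II error bound in (\ref{tests}) yields an upper bound of $e^{-BN\delta_N^2}$ as well. The term $\Psi_N \mathcal D$ goes to $0$ in $\mathbb P_{f_0}$-probability by the first part of (\ref{tests}).

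Combining the two steps via Markov's inequality: on the intersection of the denominator event and $\{\Psi_N\le \epsilon_N\}$ for a suitable $\epsilon_N\to 0$,
\begin{equation*}
\Pi(\{d(f,f_0)>\bar\delta_N\} \cap \mathcal F_N \mid X_0,\dots,X_{ND}) \le \frac{\mathcal N(\{d(f,f_0)>\bar\delta_N\})}{\mathcal D} \le 2\,e^{-(B-A-1-c)N\delta_N^2} + o_{\mathbb P_{f_0}}(1),
\end{equation*}
so for any $0<b<B-A-2$, choosing $c$ small enough and using another Markov step on the numerator upper bound gives (\ref{contract}). The analogous bound on $\Pi(\mathcal F\setminus\mathcal F_N \mid \cdot)$ is immediate from the first piece of the split, with the same exponent. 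The main (and only) technical obstacle, compared to the i.i.d. setting of \cite{GV17}, is the denominator lower bound, but this has already been absorbed into Lemma \ref{elbo} via the spectral-gap variance bound (\ref{varbd}); everything else is bookkeeping of constants to ensure $b < B-A-2$.
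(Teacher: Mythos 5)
Your proposal reconstructs the Ghosal--Ghosh--van der Vaart contraction argument that the paper invokes by reference (Theorem 13 of \cite{NS17}, with constants tracked as in Theorem 1.3.2 of \cite{N23}), and you correctly identify Lemma \ref{elbo} as the step that absorbs the Markov dependence into the denominator lower bound. The decomposition of the numerator, the Fubini/change-of-measure reduction to $\Pi_N(\mathcal F\setminus\mathcal F_N)$ and to the type-II error, and the denominator bound $\mathcal D \ge e^{-(A+1+c)N\delta_N^2}$ on an event of $\mathbb P_{f_0}$-probability $1-O(1/(N\delta_N^2))$ are all in line with the cited proof.

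One step, however, is stated loosely enough that it would not compile into the claimed $O_{\mathbb P_{f_0}}(e^{-bN\delta_N^2})$ rate as written. In your final display you arrive at a bound of the form $2e^{-(B-A-1-c)N\delta_N^2} + o_{\mathbb P_{f_0}}(1)$, where the second term is the contribution of $\Psi_N$. Since hypothesis (\ref{tests}) only asserts $\mathbb E_{f_0}\Psi_N\to 0$ (no exponential decay), this residual term can dominate the exponential one, and a sum $\text{(exponential)} + o_{\mathbb P}(1)$ does not give $O_{\mathbb P}(e^{-bN\delta_N^2})$. The fix, which is what the argument you are reconstructing actually does, is not to add the test into the bound at all: with $\Psi_N$ an indicator test (which one may assume without loss of generality, replacing $\Psi_N$ by $\mathbf 1\{\Psi_N>1/2\}$ at the cost of a factor $2$ in the error probabilities), work on the event $\{\Psi_N=0\}\cap\{\mathcal D\ge e^{-(A+1+c)N\delta_N^2}\}\cap\{\mathcal N(\mathcal F\setminus\mathcal F_N)\le e^{-(B-\epsilon)N\delta_N^2}\}\cap\{(1-\Psi_N)\mathcal N(\mathcal F_N\cap\{d>\bar\delta_N\})\le e^{-(B-\epsilon)N\delta_N^2}\}$, which has $\mathbb P_{f_0}$-probability tending to one. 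On this event $(1-\Psi_N)=1$, so $\Pi(\mathcal F_N\cap\{d>\bar\delta_N\}\,|\,X)\le 2e^{-(B-\epsilon-A-1-c)N\delta_N^2}$ with no residual term, and choosing $\epsilon,c$ small yields any $b<B-A-2$ (in fact slightly better). With that adjustment your argument is correct and matches the paper's.
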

\begin{proof}
The proof is the same as the one of Theorem 13 in \cite{NS17}. We can track the constants in this proof (similar as in Theorem 1.3.2 in \cite{N23}) to further include the set $\mathcal F_N$ in, and to obtain the explicit convergence rate bound on the r.h.s.~of, (\ref{contract}).
\end{proof}

\subsubsection{Proof of Theorems \ref{maint} and \ref{main}}

With these preparations we can now prove Theorem \ref{maint} and a version of it with distance functions $d(f,f_0)=\|P_{D,f}-P_{D,f_0}\|_{L^2\to L^2}$ replaced by $d(f,f_0)=\|P_{D,f}-P_{D, f_0}\|_{H^2\to H^2}$, relevant to prove Theorem \ref{main}. We will choose $$\delta_N = MN^{-(s+1)/(2s+2+d)}$$ throughout, for $M$ a large enough constant. We consider the prior $\Pi_N$ from (\ref{gp}) and use standard theory for Gaussian processes (e.g., Ch.2 in \cite{GN16}). In particular, recalling the cut-off function $\zeta$, we note that the reproducing kernel Hilbert space (RKHS) $\mathbb H_N$ of the Gaussian process $\theta$ generating $\Pi_N$ is given by $\mathbb H_N = \{\zeta h: h \in E_K\} \subset C^\infty_c$, with RKHS norm
\begin{equation}\label{RKHS}
\|g\|_{\mathbb H_N} \simeq \sqrt N\delta_N\big( |\langle \zeta^{-1} g, 1\rangle_{L^2}| + \|\zeta^{-1}(g-\langle g, 1\rangle_{L^2})\|_{\bar H^s_1}\big),~~ g \in \mathbb H_N.
\end{equation}

\textbf{i) Verification of (\ref{smball}).} Proposition \ref{transreg} with $k>d/2$ and Proposition \ref{translow} imply the two sided estimate $0<c_{lb} \le p_{D,f}(x,y) \le c_{ub}<\infty$ with constants that are uniform in $\|f\|_{H^s} \le U$. This applies as well to $f_0 \in H^s$ and so, by standard inequalities (e.g., Appendix B in \cite{GV17}),
$$E_{f_0}\Big|\log \frac{p_{D,f}(X_0, X_D)}{p_{D,f_0}(X_0, X_D)}\Big|^2 \lesssim \|p_{D,f}-p_{D,f_0}\|^2_{L^2(\mathcal O \times \mathcal O)} = \|P_{D,f}-P_{D,f_0}\|^2_{HS}$$ for such $f$, with constants depending on $U,s,d, \mathcal O$. 

Let us define $\theta_0 = \log (4f_0 -1)$ which is zero outside of $\mathcal O_{00}$ and lies in $H^s_c$ by the hypotheses on $f_0$. This implies that $\theta_0 - \langle \theta_0,1 \rangle_{L^2} \in H^s_c/\R \cap L_0^2 \subset \bar H^s_1$ by Proposition \ref{sobald}. If $\theta_{0,K}$ is the $L^2$-projection of $\theta_0$ onto $E_K$, then $\zeta \theta_{0,K} \in \mathbb H_N$ and 
\begin{equation}\label{boring}
\|\zeta \theta_{0,K}\|_{H^s} \lesssim |\langle \theta_0,1 \rangle_{L^2}| + \|\theta_{0,K}- \langle \theta_0,1 \rangle_{L^2}\|_{\bar H^s_1} \lesssim \|\theta_0\|_{H^s} \lesssim U.
\end{equation}
Since $H^1_c/\R \cap L^2_0 \subset \bar H^1_{1}$ (Proposition \ref{sobald}) implies that $\bar H^{-1}_1$ embeds continuously into $(H^1_c/\R \cap L^2_0)^*$, we can use (\ref{langweylig}) and choose $M$ large enough s.t.
\begin{align*}
\|\theta_0 - \zeta \theta_{0,K}\|_{(H_c^1)^*} &= \|\zeta(\theta_0-\theta_{0,K})\|_{(H^1_c)^*} \lesssim \|\zeta\|_{C^1} \|\theta_0 - \theta_{0,K}\|_{(H^1_c /\R \cap L^2_0)^*} \\
&\lesssim \|\theta_0 - \theta_{0,K}\|_{\bar H^{-1}_1} =\Big(\sum_{j>K} \lambda_{j,1}^{-1}\langle\theta_{0}, e_{j,1} \rangle_{L^2}^2\Big)^{1/2} \lesssim K^{-(s+1)/d}U\le c\delta_N/2
\end{align*}
 for any given $c, U>0$. Now using Theorem \ref{lfsball}, (\ref{boring}) and for $C_i>0$, with $M,B$ large enough,
\begin{align*}
\Pi_N(B_{\delta_N}) &\ge \Pi_N(\{\|f_\theta-f_{\theta_0}\|_{(H_c^1)^*} \le C_1 \delta_N\} \cap \{\theta: \|\theta\|_{H^s} \le 2B\}) \\
& \ge \Pi_N\big(\theta: \|\theta-\theta_0\|_{(H^1_c)^*} \le C_2 \delta_N, \|\theta-\zeta\theta_{0,K}\|_{H^s} \le B\}\big) \\
& \ge  \Pi_N\big(\theta: \|\theta-\zeta \theta_{0,K}\|_{(H^1_c)^*} \le C_3 \delta_N, \|\theta-\zeta\theta_{0,K}\|_{H^s} \le B\}\big)
\end{align*}
where we have used that the map $\theta \mapsto e^\theta$ is Lipschitz on bounded sets of $H^s$ for the $(H^1_c)^*$-norm (cf.~the argument on p.27 in \cite{N23}). We apply Corollary 2.6.18 in \cite{GN16} with `shift' vector $\zeta \theta_{0,K} \in \mathbb H_N$ and the Gaussian correlation inequality (in the form of Theorem B.1.2 in \cite{N23}) to further lower bound the r.h.s.~in the last display by
\begin{align*}
& \ge e^{-\|\zeta \theta_{0,K}\|_{\mathbb H_N}^2/2} \Pi_N\big(\theta: \|\theta\|_{(H^1_c)^*} \le C_3 \delta_N, \|\theta\|_{H^s} \le B\}\big) \\
&\ge e^{-\tilde c N \delta_N^2} \Pi_N(\|\theta\|_{(H^1_c)^*} \le C_3 \delta_N) \Pi_N(\|\theta\|_{H^s} \le B),
\end{align*}
using also (\ref{RKHS}), (\ref{boring}) and for some $\tilde c=c(U)>0$.  Next, since the RKHS of the base prior $\theta'= \sqrt N \delta_N \theta$ embeds continuously into $H^s_c \subset H^s_0$ (cf.~(\ref{RKHS})), we obtain
\begin{equation}\label{small}
\Pi_N(\|\theta\|_{(H^1_c)^*} \le C_3 \delta_N)=\Pi_N(\|\theta'\|_{(H^1_c)^*} \le C_3 \sqrt N \delta^2_N) \ge e^{-aN\delta_N^2}
\end{equation}
as in eq.~(2.24) in \cite{N23} with $\kappa=1$ there. In concluding this step we now also construct the regularisation sets $\mathcal F_N$ for (\ref{except}). If we define
\begin{align*}
\Theta_N = \big\{\theta =\zeta \vartheta,& \vartheta \in E_K, \vartheta =\vartheta_1 + \vartheta_2, |\langle \vartheta_1, 1 \rangle_{L^2}|+\|\vartheta_1-\langle \vartheta_1,  1\rangle_{L^2}\|_{\bar H^{-1}_1} \le m \delta_N, \\ & |\langle \vartheta_2, 1 \rangle_{L^2}|+\|\vartheta_2-\langle \vartheta_2, 1\rangle_{L^2}\|_{\bar H^s_1} \le m\big\}
\end{align*} 
then for every $B$ we can choose $m$ large enough so that  $\Pi_N(\Theta_N) \ge 1-e^{-B N \delta_N^2}$, by an application of the Gaussian isoperimetric theorem \cite{GN16} as in step iii) in the proof of Theorem 2.2.2 in \cite{N23} with $\kappa=1$. Now we have
$\|\theta\|_{H^s} = \|\zeta \vartheta\|_{H^s} \lesssim \|\vartheta - \langle \vartheta, 1 \rangle_{L^2}\|_{H^s} + | \langle \vartheta_1, 1 \rangle_{L^2}| +  | \langle \vartheta_2, 1 \rangle_{L^2}|$ and the last two terms are bounded by $2m$ for $\theta \in \Theta_N$. For the first we can use Proposition \ref{sobald} and the triangle inequality to obtain on $\Theta_N$
\begin{align*}
 \|\vartheta - \langle \vartheta, 1 \rangle_{L^2}\|_{H^s} &\lesssim  \|\vartheta - \langle \vartheta, 1 \rangle_{L^2}\|_{\bar H_1^s} \le \|\vartheta_1 - \langle \vartheta_1, 1 \rangle_{L^2}\|_{\bar H_1^s}+ \|\vartheta_2 - \langle \vartheta_2, 1 \rangle_{L^2}\|_{\bar H_1^s} \le c+m.
 \end{align*}
where we have used (\ref{langweylig}) in the estimate
\begin{align*}
 \|\vartheta_1 - \langle \vartheta_1, 1\rangle_{L^2}\|^2_{\bar H^s_1} &= \sum_{1 \le j \le K}  \frac{\lambda_{j,1}^{s+1}}{\lambda_{j,1}} \langle \vartheta_1, e_{j,1} \rangle_{L^2}^2  \lesssim K^{\frac{2(s+1)}{d}} \|\vartheta_1-\langle \vartheta_1, 1\rangle_{L^2}\|^2_{\bar H^{-1}_1} \lesssim N^{\frac{2s+2}{2s+2+d}} \delta_N^2,
\end{align*}
and the last term is bounded by a fixed constant $c^2$.  In conclusion this proves $\Pi_N(\|\theta\|_{H^s} \le B) \ge 1/2$ for all $B',N$ large enough so that (\ref{smball}) follows for our choice of $\delta_N, A>a+\tilde c,$ and all $M$ large enough. Since $\theta \mapsto f_\theta$ is Lipschitz on bounded subsets of $H^s$, we have in fact proved the stronger result -- to be used in the next step -- that for some $U>0$ we have
\begin{equation}\label{regiso}
\mathcal F_N := \{f_\theta: \theta \in \Theta_N\} \subset \{f: \|f\|_{H^s} \le U\}, ~\Pi_N(\mathcal F \setminus \mathcal F_N) \le e^{-B N \delta_N^2}.
\end{equation}

\textbf{ii) Construction of tests.}  We cannot rely on Hellinger testing theory as in \cite{GV17, MNP21, N23} because our data does not arise from an i.i.d.~model. Instead (following ideas from  \cite{GN11, NS17}) we use concentration inequalities, specifically Proposition \ref{opconc}, to construct these tests. For the hypothesis $H_0: f=f_0$ consider the plug in test $\Psi_N = 1\{\|\hat P_D - P_{D,f_0}\|_{L^2 \to L^2} \ge  M \delta_N \},$ where $\hat P_D$ is from (\ref{hatp}) with choice $J=BN\delta_N^2$. We verify (\ref{tests}) with $\mathcal F_N$ from (\ref{regiso}). By Proposition \ref{opconc}, the type-one error is then controlled, for $M$ large enough, as
$$\mathbb E_{f_0}\Psi_N = \mathbb P_{f_0}(\|\hat P_D - P_{D,f_0}\|_{L^2 \to L^2} \ge  M \delta_N) \le e^{-c N\delta_N^2}$$
and likewise, by the triangle inequality,
\begin{align*}
\mathbb E_f(1-\Psi_N) &= \mathbb P_{f}(\|\hat P_D - P_{D,f_0}\|_{L^2 \to L^2} <  M \delta_N)\\
&=\mathbb P_f(\|\hat P_D - P_{D,f}\|_{L^2 \to L^2} > \|P_{D,f_0}-P_{D,f}\|_{L^2 \to L^2} - M \delta_N) \le e^{-c N\delta_N^2}
\end{align*}
whenever $\|P_{D,f_0}-P_{D,f}\|_{L^2 \to L^2} \ge \bar \delta_N \ge 2M \delta_N$. Now we can apply Theorem \ref{notagain} and deduce that for all $b$ we can choose $M$ and $U$ large enough such that
$$\Pi\big(f: \|f\|_{H^s} \le U, \|P_{D, f}-P_{D,f_0}\|_{L^2 \to L^2} \ge 2 M \delta_N|X_0, \dots, X_{ND}\big) = 1 - O_{\mathbb P_{f_0}}(e^{-bN \delta_N^2}) .$$ This proves Theorem \ref{maint}. To proceed, note that the same arguments work for $\|\cdot\|_{H^2 \to H^2}$ operator norms by appealing to Corollary \ref{opconc2} with the same choice of $J$, resulting in the slower convergence rate $\tilde \delta_N = N^{-(s-1)/(2s+2+d)}$ replacing $\delta_N$. Now to prove Theorem \ref{main} under hypothesis (\ref{sunnyside}), we can invoke the stability estimate Theorem \ref{stabop} and the set inclusion
\begin{align*}&\Big\{f: \|f - f_0\|_{L^2} \le  M\tilde \delta_N\Big\}  \supset \Big\{f:\|f\|_{H^s} \le U, \|P_{D,f} -P_{D,f_0}\|_{H^2 \to H^2} \le  2M \tilde \delta_N \big\}
\end{align*} 
for $M$ large enough such that $M \ge \bar C$. If (\ref{sunnyside}) does not hold we can still use the stability estimate (\ref{logstab0}) from Theorem \ref{logstabthm} and obtain the slower  rate for the posterior distribution. This completes the proof of the contraction rate bounds for $f_0$ in Theorem \ref{main}. The rate for the HS-norms follow in a similar way from (\ref{backlip}), (\ref{backlog}) instead of the previous stability estimates.

\subsubsection{Posterior mean convergence and proof of Theorem \ref{showoff}}\label{show}

The above contraction results holds as well for the `linear' parameter $\theta - \theta_0$, as $\log$ is $L^2$-Lipschitz on $\|\cdot\|_{H^s}$-bounded sets of $f$'s bounded away from zero (and using that $\|f-f_0\|_\infty \to 0$ for $f\to f_0$ in $L^2$ bounded in $H^s$). In turn we further deduce a convergence rate for the posterior mean vectors
\begin{equation}\label{meanrat}
\|E^\Pi[\theta|X_0, \dots, X_{ND}] - \theta_0\|_{L^2} = O_{\mathbb P_{f_0}}(\tilde \delta_N)
\end{equation}
using that we have exponential convergence to zero in (\ref{contract}) for any $b>0$ if we just increase the constant $M$, and by a uniform integrability argument as in Theorem 2.3.2 of \cite{N23} (or see also the proof of Theorem 3.2 in \cite{MNP21}, to whom this argument is due). This then implies the same $L^2(\mathcal O)$-rates for $\bar f_N = f_{E^\Pi[\theta|X_0, \dots, X_{ND}]}$ towards $f_0$ and and in particular implies the second limit in Theorem \ref{showoff}. An argument parallel to the one leading to (\ref{meanrat}) further implies that $\|E^\Pi[\theta|X_0, \dots, X_{ND}]\|_{H^s}=O_{\mathbb P_{f_0}}(1)$ and we can then use (\ref{weakball}) and the imbedding $L^2 \subset (H^1_c)^*$ to obtain convergence to zero of the Hilbert-Schmidt norms $\|P_{D, \bar f_N} -P_{D, f_0}\|_{HS}$ (which bound $\|\cdot\|_{L^2\to L^2}$ norms) also at rate $\tilde \delta_N$.

 \subsection{Neumann eigenfunctions on cylindrical domains } \label{cylinder}

\subsubsection{Proof of Proposition \ref{cyllap}}

Let us decompose a point $x \in \mathcal O_1 \times (0,w)$ as $y=(x_{1}, \dots, x_{d-1})$, $z=x_d.$ The restricted Neumann Laplacians $\Delta_{\mathcal O_1}, \Delta_{(0,w)}$  have  discrete non-positive spectrum on $L^2(\mathcal O_1)$ and $L^2((0,w))$, respectively, with eigenfunctions $e_{1,k}, e_{2, k}, k \in \mathbb N,$ all orthogonal on constants on their respective domains. If we set $e_{1,0}=1/(vol (\mathcal O_1))^{1/2}$, $e_{2,0} = 1/\sqrt w$ for eigenvalues $\lambda_{i, 0}=0 \le  \lambda_{i, k}$ then the eigenfunctions $(e_j: j \ge 0)$ of $\Delta$ on $L^2(\mathcal O)$ tensorise by a standard separation of variables argument (that is left to the reader).
\begin{proposition}
The eigenfunctions of $-\Delta$ on $\mathcal O$ for eigenvalues $\lambda_j = \lambda_{1, k} + \lambda_{2, l}$ are
\begin{equation}
e_{j}(y,z) = e_{1, k}(y) \times e_{2, l}(z),~~~j= (k, l) \in \mathbb N^2 \cup \{0,0\}, ~y \in \mathcal O_1, z \in (0,w).
\end{equation}
\end{proposition}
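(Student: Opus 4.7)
The plan is a textbook separation-of-variables argument; I will sketch how to execute it cleanly on the cylinder $\mathcal O = \mathcal O_1 \times (0,w)$.

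First I would verify that each product $e_j(y,z) := e_{1,k}(y)\,e_{2,l}(z)$ is an eigenfunction of $-\Delta$ on $\mathcal O$ with the claimed eigenvalue. Since $\Delta$ on the cylinder decomposes as $\Delta_y + \partial_z^2$, and since $-\Delta_y e_{1,k} = \lambda_{1,k} e_{1,k}$ and $-\partial_z^2 e_{2,l} = \lambda_{2,l} e_{2,l}$, a direct calculation gives $-\Delta e_j = (\lambda_{1,k}+\lambda_{2,l}) e_j$. For the boundary condition, the boundary of $\mathcal O$ splits into the lateral piece $\partial \mathcal O_1 \times (0,w)$, where the outward normal has no $z$-component, and the two caps $\mathcal O_1 \times \{0,w\}$, where it has no $y$-component. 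Thus $\partial e_j / \partial \nu$ vanishes on the lateral boundary because $\partial e_{1,k}/\partial \nu_{\mathcal O_1} = 0$, and it vanishes on the caps because $e_{2,l}'(0) = e_{2,l}'(w) = 0$.

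Next I would argue that the family $\{e_j : j \in \mathbb N^2 \cup \{(0,0)\}\}$ is an orthonormal system in $L^2(\mathcal O)$ which is complete. Orthonormality follows from Fubini and orthonormality of each factor basis on $L^2(\mathcal O_1)$ and $L^2((0,w))$. For completeness, I would use that if $f \in L^2(\mathcal O)$ is orthogonal to every $e_{1,k}(y) e_{2,l}(z)$, then for a.e.\ $z \in (0,w)$ the section $f(\cdot, z) \in L^2(\mathcal O_1)$ is orthogonal to each $e_{1,k}$ (by Fubini applied with test functions of the form $e_{2,l}$), so $f(\cdot, z) = 0$ a.e.; hence $f \equiv 0$ in $L^2(\mathcal O)$. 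Since these products are eigenfunctions of the self-adjoint Neumann Laplacian on $\mathcal O$ (see Subsection \ref{specsec}) with the listed eigenvalues, and together span $L^2(\mathcal O)$, they exhaust the spectrum and every eigenspace: any other eigenfunction would have to be a linear combination of those $e_j$ sharing its eigenvalue.

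The only subtlety I anticipate is the non-smoothness of $\mathcal O$ at the corners $\partial \mathcal O_1 \times \{0,w\}$, where the strong form of the Neumann condition is not pointwise defined. This is handled by interpreting the eigenvalue problem in the variational (Dirichlet-form) sense of (\ref{diric}), in which it suffices that $e_j \in H^1(\mathcal O)$ and that Green's identity $\langle -\Delta e_j, \varphi\rangle_{L^2(\mathcal O)} = \langle \nabla e_j, \nabla \varphi\rangle_{L^2(\mathcal O)}$ holds for all $\varphi \in H^1(\mathcal O)$; by Fubini and the one-dimensional / lower-dimensional Neumann identities this reduces to an identity on each factor. The corners then cause no issue because the form has no boundary contribution. (For the actual statement in Proposition \ref{cyllap} on the \emph{smooth} domain $\mathcal O_{m,w}$ from (\ref{om}), one passes from the cylinder to $\mathcal O_{m,w}$ by the perturbation argument outlined in Theorem \ref{cylinderth}.)
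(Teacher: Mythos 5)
Your proof is correct and is precisely the ``standard separation of variables argument'' that the paper invokes but explicitly leaves to the reader. The extra care you take with the non-smooth corners of the cylinder---reverting to the variational (Dirichlet-form) formulation so that the boundary condition is tested weakly and reduces factor-by-factor via Fubini---is a worthwhile detail that the paper glosses over.
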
 

To proceed, recall that for a convex domain $\mathcal O_1$, the Poincar\'e constant satisfies $p(\mathcal O_1)\le (diam(\mathcal O_1)/\pi)^2$ by a classical result of \cite{PW60}. For simple eigenvalues we then have:

\begin{proposition}\label{cyllapsimp}
Suppose that the Poincar\'e constant $p(\mathcal O_1)$ of $\mathcal O_1$ satisfies $p(\mathcal O_1) \le w^2/2\pi^2.$ Then the first non-zero eigenvalue $\lambda_1$ of $\Delta$ on $\mathcal O= \mathcal O_1 \times (0,w)$ is simple, equals $\pi^2/w^2$ and the rest of the spectrum is separated from $\lambda_1$ by at least $\pi^2/w^2$. The corresponding eigenfunction is smooth in the strict interior of $\mathcal O$ and satisfies for all $\eta>0$ small enough
\begin{equation}\label{gradual}
\inf_{x: |x-\partial \mathcal O|_{\R^d} \ge \eta} |\nabla e_1 (x)|_{\R^d} \ge \frac{\pi^2 \eta}{2 w^2 \sqrt{vol(\mathcal O_1)}} > 0.
\end{equation}
\end{proposition}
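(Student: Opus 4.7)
The plan is to prove the proposition by separation of variables, reducing everything to the explicit one-dimensional case on $(0,w)$.

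First, I would invoke the tensorization result displayed immediately above (in the unnamed proposition): every eigen-pair of $-\Delta$ on $\mathcal O = \mathcal O_1 \times (0,w)$ is of the form $(e_{1,k}(y)e_{2,l}(z), \lambda_{1,k}+\lambda_{2,l})$, which exhausts the spectrum since the tensor products form a complete orthonormal basis of $L^2(\mathcal O)=L^2(\mathcal O_1)\otimes L^2((0,w))$. For the one-dimensional Neumann Laplacian on $(0,w)$ a direct ODE computation gives the eigenvalues $\lambda_{2,l}=(l\pi/w)^2$ with simple multiplicity and eigenfunctions proportional to $\cos(l\pi z/w)$; in particular $\lambda_{2,0}=0$, $\lambda_{2,1}=\pi^2/w^2$ and $\lambda_{2,2}=4\pi^2/w^2$. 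On $\mathcal O_1$ the variational characterisation combined with the Poincaré inequality yields $\lambda_{1,1}\ge 1/p(\mathcal O_1)\ge 2\pi^2/w^2$ under the standing hypothesis.

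Second, I would enumerate the candidate nonzero eigenvalues of $-\Delta$ on $\mathcal O$:
\begin{equation*}
(k,l)=(0,1): \tfrac{\pi^2}{w^2},\qquad (k,l)=(1,0): \ge \tfrac{2\pi^2}{w^2},\qquad (k,l)=(0,2): \tfrac{4\pi^2}{w^2},
\end{equation*}
with all other $k,l\ge 1$ pairs giving at least $3\pi^2/w^2$. Hence the minimum is $\lambda_1=\pi^2/w^2$, achieved by the unique pair $(0,1)$: uniqueness follows because $e_{1,0}$ is the one-dimensional constant eigenspace of $-\Delta_{\mathcal O_1}$ (simple) and $\lambda_{2,1}$ is a simple eigenvalue on the interval. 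The next eigenvalue is at least $2\pi^2/w^2$, giving the advertised gap of $\pi^2/w^2$.

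Third, the corresponding eigenfunction is, up to sign and the implicit normalisation, $e_1(y,z)=\cos(\pi z/w)/\sqrt{vol(\mathcal O_1)}$, which is $C^\infty$ on $\mathcal O$. Its gradient only has a nonzero $z$-component: $|\nabla e_1(y,z)|_{\R^d}=(\pi/w)|\sin(\pi z/w)|/\sqrt{vol(\mathcal O_1)}$. If $|x-\partial \mathcal O|_{\R^d}\ge \eta$ then in particular $z\in[\eta,w-\eta]$, so $|\sin(\pi z/w)|\ge \sin(\pi\eta/w)$. Using the elementary bound $\sin(x)\ge x/2$ for $x\in[0,\sqrt{3}]$ (valid for $\eta$ small enough so that $\pi\eta/w\le \sqrt 3$) gives $\sin(\pi\eta/w)\ge \pi\eta/(2w)$, which produces the claimed lower bound $\pi^2\eta/(2w^2\sqrt{vol(\mathcal O_1)})$.

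The argument is essentially elementary; the only step requiring any care is the completeness of the tensorised eigenbasis (so we really have identified all eigenpairs of $-\Delta$ on $\mathcal O$ and can be sure that $\pi^2/w^2$ is the genuine spectral minimum among nonzero eigenvalues). Everything else is explicit calculus on the interval $(0,w)$ combined with the Poincaré estimate on the base.
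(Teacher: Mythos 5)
Your proof is correct and follows essentially the same route as the paper: tensorization of the Neumann spectrum on $\mathcal O_1\times(0,w)$, identifying $\lambda_1=\pi^2/w^2$ via the Poincar\'e bound $\lambda_{1,1}\ge 1/p(\mathcal O_1)\ge 2\pi^2/w^2$, the explicit cosine eigenfunction, and the elementary $\sin u\ge u/2$ estimate for the gradient lower bound. Your enumeration of candidate eigenvalue pairs and the explicit observation that both tensor factors are simple (making $(0,1)$ the unique minimiser) is a slightly more careful rendering of the paper's argument, and your handling of the $z\in[\eta,w-\eta]$ range via $|\sin(\pi z/w)|\ge\sin(\pi\eta/w)$ cleanly covers both endpoints.
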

\begin{proof}
By the assumption and (\ref{topevar}) we have $\lambda_{1,1} \ge 1/p(\mathcal O_1)$. The first eigenvalue $\lambda_{2,1}$ of $\Delta$ on $(0,w)$ is $\pi^2/w^2$, hence $\lambda_{2,1}<\lambda_{1,1}$ and the first non-constant eigenfunction of $\Delta$ on $\mathcal O$ corresponds to $\lambda_1 = 0+ \lambda_{2,1}$ and equals 
\begin{equation}\label{theone}
e_{1}(y,z) = e_{1,0}(y) e_{2,1}(z)= \frac{\cos(\pi z/w)}{\sqrt w \sqrt{vol(\mathcal O_1)}},~~y \in \mathcal O_1, 0<z<w.
\end{equation}
By the hypotheses the next eigenvalue satisfies $\lambda_{2} \ge \min \big(1/p(\mathcal O_1), \frac{4\pi^2}{w^2} \big)$ and so we have a `two-sided' spectral gap around $\lambda_1$ in the spectrum $\sigma(\Delta_{\mathcal O})$ in the sense that 
\begin{equation}
\sigma(\Delta_{\mathcal O}) \cap (\lambda_1 - \epsilon, \lambda _1 + \epsilon) = \{\lambda_1\}~~\text{ for } \epsilon = \min \Big(\frac{\pi^2}{w^2},  \frac{1}{p(\mathcal O_1)} - \frac{\pi^2}{w^2} \Big).
\end{equation}
By the assumption on $p(\mathcal O_1)$ the first claim follows. Next for $x=(y,z)$ away from the boundary we have $\min(z, 1-z) \ge \eta>0$ and so we have
\begin{align*}
|\nabla e_1 (x)|^2_{\R^d} & = \sum_{j=1}^d \big[\frac{\partial e_1(x) }{\partial x_j}\big]^2 = \frac{1}{w}\frac{1}{vol (\mathcal O_1)} \Big(\frac{d}{dz} \cos\big(\frac{\pi z}{w}\big) \Big)^2 \\
& = \frac{\pi^2}{w^3 vol (\mathcal O_1)} \Big( \sin\big(\frac{\pi z}{w}\big) \Big)^2 \ge \frac{\pi^4 \eta^2}{4 w^5 vol(\mathcal O_1)} > 0,
\end{align*}
for $\eta$ small w.l.o.g. (so that we can use $\sin u \ge u/2$ for $u$ near zero).
\end{proof}

If in the previous proof we only assume $p(\mathcal O_1) \le \frac{w^2}{\pi^2}$ then the first eigenvalue of $\Delta_{\mathcal O_1}$ may co-incide with the one of $(0,w)$ and there may then be multiple eigenfunctions for $\lambda_{1}$. But the eigenfunction (\ref{theone}) is still one permissible choice, and we can choose the weight $\iota$ in (\ref{eigenblock}) to choose that eigenfunction, so that Proposition \ref{cyllap} remains valid also in this case.

\subsubsection{Proof of Theorem \ref{cylinderth}, Step I: perturbation}

The remainder of this section is devoted to the proof of Theorem \ref{cylinderth}. It consists of combining Proposition \ref{cyllap} with perturbation theory for linear operators. The following result will be used repeatedly. For a proof see Sec.s~IV.3.4-5 in Kato \cite{K76} (or cf.~also Proposition 4.2 in \cite{GHR04}). The clusters of the eigenvalues converge also without simplicity of $\lambda_{1,f_0}$, see the discussion in \cite{K76} or also in Sec 2.3 in \cite{KLN20}.
\begin{proposition}\label{opperture}
Let $K$ be a bounded linear self-adjoint operator on a separable Hilbert space $H$ with discrete spectrum $\sigma(K)$ and simple eigenvalue $\kappa$ such that $\sigma(K) \cap [\kappa - \epsilon, \kappa + \epsilon] = \{\kappa\}$ for some $\epsilon>0$. Let $K_\delta$ be another self-adjoint linear operator such that $\|K-K_\delta\|_{H \to H} < \epsilon/4$. Then $K_\delta$ has a simple eigenvalue $\kappa_\delta \in (\kappa - \epsilon/2, \kappa + \epsilon/2)$ and there are eigenvectors $k, k_{\delta}$ of $K, K_\delta$ for $\kappa, \kappa_\delta$ such that $\|k-k_\delta\|_H\to 0$ as $\epsilon \to 0$.
\end{proposition}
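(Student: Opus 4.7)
The plan is the classical Riesz spectral projection argument from perturbation theory. Let $\Gamma \subset \mathbb{C}$ be the positively oriented circle of radius $\epsilon/2$ centred at $\kappa$. By the spectral gap hypothesis and self-adjointness of $K$, the resolvent estimate $\|(K-zI)^{-1}\|_{H \to H} = 1/\mathrm{dist}(z,\sigma(K)) \le 2/\epsilon$ holds for every $z \in \Gamma$, since $\Gamma$ stays at distance $\ge \epsilon/2$ from all other spectral points of $K$. Define the Riesz projection
$$P = \frac{1}{2\pi i} \oint_\Gamma (K-zI)^{-1}\, dz,$$
which is the orthogonal rank-one projection onto $\mathrm{span}\{k\}$, as $\kappa$ is the only (and simple) point of $\sigma(K)$ enclosed by $\Gamma$.

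For the perturbed operator $K_\delta$ I would factor $K_\delta - zI = (K-zI)\bigl[I + (K-zI)^{-1}(K_\delta - K)\bigr]$; under the hypothesis $\|K-K_\delta\|_{H \to H} < \epsilon/4$ the bracketed operator has norm at most $(2/\epsilon)(\epsilon/4)=1/2$ uniformly on $\Gamma$, so a Neumann series yields existence of $(K_\delta - zI)^{-1}$ on $\Gamma$ with norm at most $4/\epsilon$. The perturbed projector
$$P_\delta = \frac{1}{2\pi i} \oint_\Gamma (K_\delta - zI)^{-1}\, dz$$
is therefore well-defined, and the second resolvent identity $(K_\delta - zI)^{-1} - (K-zI)^{-1} = (K_\delta - zI)^{-1}(K - K_\delta)(K-zI)^{-1}$ combined with $|\Gamma| = \pi\epsilon$ gives a contour-integral bound of the form $\|P - P_\delta\|_{H \to H} \le C\,\|K-K_\delta\|_{H \to H}/\epsilon$, with $C$ an absolute constant.

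Taking $\|K-K_\delta\|_{H\to H}$ small enough forces $\|P-P_\delta\|_{H\to H} < 1$, and the classical projection lemma (two projections at operator-norm distance strictly less than $1$ have equal rank) then yields $\mathrm{rank}(P_\delta) = \mathrm{rank}(P) = 1$. Since $K_\delta$ is self-adjoint with discrete spectrum and $P_\delta$ is the spectral projector for $\sigma(K_\delta)\cap \Gamma^\circ$, this means $K_\delta$ has exactly one eigenvalue inside $\Gamma$, which is therefore simple and lies in $(\kappa - \epsilon/2, \kappa + \epsilon/2)$; call it $\kappa_\delta$. For the eigenvector convergence I would set $k_\delta = P_\delta k / \|P_\delta k\|_H$ (with the sign of $k_\delta$ fixed so that $\langle k_\delta, k\rangle_H \ge 0$), note that $\|P_\delta k - k\|_H = \|(P_\delta - P)k\|_H \to 0$ as $\|K - K_\delta\|_{H\to H} \to 0$, and conclude $\|k - k_\delta\|_H \to 0$ via the triangle inequality applied to $k - k_\delta = (k - P_\delta k) + (P_\delta k - k_\delta)$, the second term being controlled by $|\,\|P_\delta k\|_H - 1\,|$. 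The only non-trivial step is the Neumann-series control of the perturbed resolvent along $\Gamma$; everything else is bookkeeping with contour integrals and the rank-continuity of projections.
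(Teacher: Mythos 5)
Your argument is correct, and it is precisely the classical Riesz spectral projection / contour-integral argument from Kato (Sections IV.3.4--5), which is exactly the reference the paper cites in lieu of giving a proof; the Neumann-series resolvent bound, the second resolvent identity estimate $\|P-P_\delta\|_{H\to H}\le (4/\epsilon)\|K-K_\delta\|_{H\to H}<1$, and the rank-equality-of-nearby-projections lemma are all correctly deployed, as is the normalization $k_\delta=P_\delta k/\|P_\delta k\|_H$. One small remark: under the stated hypothesis $\|K-K_\delta\|_{H\to H}<\epsilon/4$ the bound $\|P-P_\delta\|_{H\to H}<1$ already holds automatically, so the phrase ``taking $\|K-K_\delta\|$ small enough'' is unnecessary for the rank argument (it is only needed for the final eigenvector convergence), but this does not affect correctness.
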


\subsubsection{Step II: rounding the corners}

Let us fix $w \ge 2$ and agree to write $\mathcal O_m \equiv \mathcal O_{m,w}, m \in \mathbb N,$ for the sequence of domains from (\ref{om}), as well as $\mathcal O = \mathcal O_{(w)}$ for the limit set, in this subsection. Note that  $\mathcal O_1$ is the largest domain containing all the others and the perturbation argument below will be given on the Hilbert space $L^2(\mathcal O_1) \supset L^2(\mathcal O_m) \supset L^2(\mathcal O)$, where the inclusions are to be understood by restriction to, and zero extension from, the domains $\mathcal O, \mathcal O_m$. [Note a slight abuse of notation that $\mathcal O_1$ is not the cylinder base from earlier.]

Consider the linear operators on $L^2(\mathcal O_m)$ given by $T_{1,1} = (id + \Delta_{\mathcal O_m})^{-1}$ from after (\ref{h1est}) in Section \ref{specsec} with $f=1, \mathcal O=\mathcal O_m$. We extend them to operators denoted by $T_{\mathcal O_m}$ on $L^2(\mathcal O_1)$ by restriction of $h \in L^2(\mathcal O_1)$ to $\mathcal O_m$ and zero-extension of the resulting functions $T_{1,1}(h)$ outside of $\mathcal O_m$. Likewise we define $T_{\mathcal O}$ on $L^2(\mathcal O_1)$.

\begin{lemma}
We have as $m \to \infty $ that $\|T_{\mathcal O_m} - T_{\mathcal O}\|_{L^2(\mathcal O_1) \to L^2(\mathcal O_1)} \to 0.$
\end{lemma}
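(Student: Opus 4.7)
The plan is to argue by contradiction, exploiting that both resolvents smooth $L^2$ into $H^1$ so the unit ball of $L^2(\mathcal O_1)$ is mapped into a set precompact in $L^2$. First I would set up uniform energy and Sobolev bounds; then pass to weak limits using the Neumann weak form; the \textbf{main delicate point} will be controlling the thin annulus $\mathcal O_m \setminus \mathcal O$ both in the energy estimate and in identifying the limit.

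For $h \in L^2(\mathcal O_1)$ with $\|h\|_{L^2(\mathcal O_1)} \le 1$, let $u_m := T_{\mathcal O_m}(h)$. Testing the Neumann weak form
$$\int_{\mathcal O_m} (\nabla u_m \cdot \nabla \phi + u_m \phi) dx = \int_{\mathcal O_m} h \phi\, dx, \quad \forall \phi \in H^1(\mathcal O_m),$$
against $\phi = u_m$ gives $\|u_m\|_{H^1(\mathcal O_m)} \le 1$, and analogously $\|u\|_{H^1(\mathcal O)} \le 1$ for $u := T_{\mathcal O}(h)$. Since the $\mathcal O_m$ have uniformly Lipschitz (in fact, for $m$ large, uniformly smooth) boundaries with $\mathcal O \subset \mathcal O_m \subset \mathcal O_1$, a uniform Sobolev embedding yields $\|u_m\|_{L^{p}(\mathcal O_m)} \le C$ for some $p>2$ and a constant $C$ independent of $m$. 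Because $|\mathcal O_m \setminus \mathcal O| \to 0$, H\"older's inequality then gives
$$\|u_m\|_{L^2(\mathcal O_m \setminus \mathcal O)} \le C\, |\mathcal O_m \setminus \mathcal O|^{1/2-1/p} \to 0$$
uniformly over the unit ball, which will tame the zero-extension across $\partial \mathcal O$.

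Suppose the conclusion fails: there exist $\varepsilon>0$, $m_k \to \infty$, and $h_k$ with $\|h_k\|_{L^2(\mathcal O_1)} \le 1$ and $\|(T_{\mathcal O_{m_k}} - T_{\mathcal O})h_k\|_{L^2(\mathcal O_1)} \ge \varepsilon$. By Banach-Alaoglu extract a subsequence along which $h_k \rightharpoonup h^*$ weakly in $L^2(\mathcal O_1)$. Since $T_{\mathcal O}$ is compact on $L^2(\mathcal O_1)$ (its image sits in a bounded $H^1(\mathcal O)$-set, which embeds compactly into $L^2(\mathcal O_1)$ by extension by zero), $T_{\mathcal O}(h_k) \to T_{\mathcal O}(h^*)$ strongly in $L^2(\mathcal O_1)$. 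The restrictions $u_{m_k}|_{\mathcal O}$ are bounded in $H^1(\mathcal O)$, so along a further subsequence they converge weakly in $H^1(\mathcal O)$ and strongly in $L^2(\mathcal O)$ to some $v^* \in H^1(\mathcal O)$; combined with the preceding annulus bound, $u_{m_k} \to v^*$ (extended by zero) strongly in $L^2(\mathcal O_1)$.

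The remaining step is to identify $v^* = T_{\mathcal O}(h^*)$. For $\phi \in C^\infty(\overline{\mathcal O_1})$ I would test the weak form of $u_{m_k}$ against $\phi|_{\mathcal O_{m_k}}$ and split each integral as $\int_{\mathcal O_{m_k}} = \int_{\mathcal O} + \int_{\mathcal O_{m_k} \setminus \mathcal O}$. The pieces over $\mathcal O$ pass to the limit using weak-$H^1(\mathcal O)$ convergence of $u_{m_k}|_{\mathcal O}$ on the bilinear terms and weak $L^2(\mathcal O_1)$-convergence of $h_k$ against the fixed function $\phi \mathbf 1_{\mathcal O}$, giving $\int_{\mathcal O} (\nabla v^* \cdot \nabla \phi + v^* \phi)\,dx = \int_{\mathcal O} h^* \phi\, dx$. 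The annulus pieces vanish because $\|\nabla \phi\|_{L^2(\mathcal O_{m_k}\setminus \mathcal O)} + \|\phi\|_{L^2(\mathcal O_{m_k}\setminus \mathcal O)} \to 0$ by absolute continuity of the integral while $\|u_{m_k}\|_{H^1(\mathcal O_{m_k})}$ and $\|h_k\|_{L^2(\mathcal O_1)}$ stay bounded. Density of $\{\phi|_{\mathcal O} : \phi \in C^\infty(\overline{\mathcal O_1})\}$ in $H^1(\mathcal O)$ then yields $v^* = T_{\mathcal O}(h^*)$, so $u_{m_k} - T_{\mathcal O}(h_k) \to 0$ strongly in $L^2(\mathcal O_1)$, contradicting the lower bound $\varepsilon$. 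The hardest technicality is the uniform Sobolev constant for the $\mathcal O_m$ underpinning the annulus estimate; it follows either from convexity via Payne-Weinberger-type arguments, or from the observation that for $m$ large the $\mathcal O_m$ admit uniform Lipschitz extension operators into $\mathcal O_1$.
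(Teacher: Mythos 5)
Your proof is correct, but the route is genuinely different from the paper's, and the comparison is instructive. The paper works at the $H^2$ level: it invokes Grisvard's uniform-in-domain elliptic estimate $\|u_m(h)\|_{H^2(\mathcal O_m)}\le C\|h\|_{L^2}$, outsources the pointwise convergence $u_m(h)\to u(h)$ (for fixed $h$) to the argument after (3.2.1.8) in Grisvard, and then upgrades to operator-norm convergence by a contradiction argument driven by the \emph{compact} embedding $L^2(\mathcal O_1)\hookrightarrow (H^1(\mathcal O_1))^*$: one extracts $h_m\to h$ \emph{strongly} in $(H^1)^*$, and uses self-adjointness plus the bound $\sup_{\|\psi\|_{L^2}\le 1}\|T_{\mathcal O_m}\psi\|_{H^1(\mathcal O_m)}\le 1$ to get $\|u_m(h_m)-u_m(h)\|_{L^2}\lesssim \|h_m-h\|_{(H^1)^*}\to 0$, uniformly in $m$. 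You instead stay at the $H^1$ level and prove the whole thing from the weak formulation: Banach--Alaoglu gives $h_k\rightharpoonup h^*$ weakly in $L^2$, Rellich compactness of $T_{\mathcal O}$ gives $T_{\mathcal O}h_k\to T_{\mathcal O}h^*$, and the mosaic ``interior $\mathcal O$ part + vanishing annulus part'' in the weak form identifies the limit of the $u_{m_k}$. Your approach buys self-containedness (no $H^2$ regularity, no appeal to Grisvard's perturbation-of-domain argument), at the price of an extra ingredient the paper's $(H^1)^*$-duality step neatly avoids: the annulus estimate $\|u_m\|_{L^2(\mathcal O_m\setminus\mathcal O)}\to 0$, which needs a uniform $H^1\hookrightarrow L^p$ ($p>2$) constant over the $\mathcal O_m$. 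Your hedge at the end is honest about this; I would just note that the cleaner of your two suggested routes is the uniform extension operator (Stein/Jones for Lipschitz domains of uniform character, or simply uniform interior/exterior ball for these convex $\mathcal O_m$), since Payne--Weinberger is a Poincar\'e result and would need a separate argument to yield the higher integrability. With that one citation pinned down, your proof is complete and correct.
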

\begin{proof}
For any $h$ such that $\|h\|_{L^2(\mathcal O_m)} \le \|h\|_{L^2(\mathcal O_1)} \le 1$ and writing $u_m(h) = T_{\mathcal O_m}(h)$, we have from Theorem 3.1.3.3 in \cite{G11} (with $\lambda=1$ there) that 
\begin{equation}\label{keyregbd}
\|u_m(h)\|_{H^2(\mathcal O_m)} \le C \|h\|_{L^2(\mathcal O_m)} \le C,
\end{equation}
where $C$ is a numerical constant independent of $\mathcal O_m, h$. Following the argument given after (3.2.1.8) in \cite{G11} one shows that $u_m(h) \to u(h)=T_{\mathcal O}(h)$ weakly in $H^2(\mathcal O)$ and then by compactness also in the norm of $L^2(\mathcal O)$ and in fact of $L^2(\mathcal O_1)$ for the given $h$. This convergence is uniform in $h$: indeed, suppose $u_m(h)$ does not converge to $u(h)$ in $L^2(\mathcal O_1)$ uniformly in $\|h\|_{L^2(\mathcal O_1)} \le 1$. Then there exists $\epsilon>0$ and a sequence $h_m \in L^2(\mathcal O_1)$ such that $\|h_m\|_{L^2(\mathcal O_1)} \le 1$ for which
\begin{equation}\label{tocont}
\|u_m(h_m) - u(h_m)\|_{L^2(\mathcal O_1)} \ge \epsilon_0>0~~ \textit{for all }m.
\end{equation}
The sequence $h_{m}$ converges in the dual space $(H^1(\mathcal O_1))^*$ to some $h$ along a subsequence, by compactness of the inclusion $L^2 \subset (H^1)^*$. As $T_{\mathcal O_m}$ is self-adjoint on $L^2(\mathcal O_m)$ we deduce
\begin{align*}
\|u_m(h_m)-u_m(h)\|_{L^2} &= \sup_{\|\psi\|_{L^2(\mathcal O_m)} \le 1}\big|\langle T_{\mathcal O_m}\psi, h_m-h \rangle_{L^2(\mathcal O_m)}  \big| \\
&\le \|h_m - h\|_{(H^1(\mathcal O_m))^*} \sup_{\|\psi\|_{L^2(\mathcal O_m)} \le 1} \|T_{\mathcal O_m}(\psi)\|_{H^1(\mathcal O_m)} \\
&\lesssim \|h_m-h\|_{(H^{1}(\mathcal O_1))^*}  \to_{m \to \infty} 0 
\end{align*}
using also that the restriction operator from $\mathcal O_1$ to $\mathcal O_m$ is continuous from $(H^1(\mathcal O_1))^*$ to $(H^1(\mathcal O_m))^*$, and where the last supremum was bounded using (\ref{h1est}) (with $f=1$) and the Cauchy-Schwarz inequality, by $\sup_{\|\psi\|_{L^2(\mathcal O_m)} \le 1}\|T_{\mathcal O_m}(\psi)\|^{1/2}_{L^2(\mathcal O_m)} \le 1$, 
since $T_{\mathcal O_m}$ has $L^2 \to L^2$ norm at most one as its eigenvalues satisfy $1/(1+\lambda_{j,m}) \le 1$ for all $j \ge 0, m$. The same argument implies that $u(h_m) \to u(h)$ in in $L^2(\mathcal O_1)$. From what precedes we deduce that
$$\|u_m(h_m)-u(h_m)\|_{L^2} \le \|u_m(h_m)-u_m(h)\|_{L^2} + \|u_m(h)-u(h)\|_{L^2} + \|u(h)-u(h_m)\|_{L^2}$$ converges to zero as $m \to \infty$, which contradicts (\ref{tocont}), and proves the lemma.
\end{proof}

Just as after (\ref{h1est}), the eigenvalues of the limiting operator $T_{\mathcal O}$ are $1, (1+\lambda_{1,1})^{-1}, (1+\lambda_{2,1})^{-1}, \dots,$ for eigenfunctions $1_{\mathcal O}, e_{1,1}, e_{2,1}, \dots$ of $\Delta_\mathcal O$ extended by zero outside of $\mathcal O$. [Note that $L^2(\mathcal O_1) = L^2(\mathcal O) \oplus L^2(\mathcal O_1 \setminus \mathcal O)$ is an orthogonal sum.] By Proposition \ref{cyllapsimp}, the eigenvalue $(1+\lambda_{1,1})^{-1}$ is isolated and simple. Similarly, the eigenpairs of $T_{\mathcal O_m}$ are $((1+\lambda_{j,1,m})^{-1}, e_{j,1,m})$ with eigenfunctions extended by zero outside of $\mathcal O_m$, and from Proposition \ref{opperture}  we deduce that $(\lambda_{1,1,m}, e_{1,1,m}) \to (\lambda_{1,1}, e_{1,1})$ as $m \to \infty$ in $\R \times L^2(\mathcal O_1)$. Moreover in any strict interior subset of $\mathcal O$ containing $\mathcal O_0$, the eigenfunctions $e_{1,1,m}, e_{1,1}$ have uniformly bounded Sobolev norms of any order (e.g., use \cite{E10}, p.334, Thm 2) and so by a standard compactness argument for Sobolev norms and the Sobolev imbedding $H^{\alpha} \subset C^2, \alpha>2+d/2$, we obtain convergence of 
\begin{equation}\label{c2ef}
e_{1,1,m} \to e_{1,1} \text{ in } C^2(\mathcal O_0).
\end{equation}
Thus the gradient condition (\ref{gradual}) for $e_{1,1}$ is inherited by $e_{1,1,m}$ for all $m$ large enough depending on the lower bound in (\ref{gradual}). Also $|\Delta e_{1,1,m}|$ remains bounded on $\mathcal O_0$ by a fixed constant in view of (\ref{c2ef}), so we can verify (\ref{sunnyside}) for $\mu$ large enough and some $c_0>0$. This completes the proof of Theorem \ref{cylinderth}A).

\subsubsection{Step III: neighbourhood of $\Delta$}

We now extend the previous result to a neighbourhood of $f=1$. As the domain is fixed in what follows, we just write $\mathcal O$ for the bounded convex smooth domain $\mathcal O_{m,w}=\mathcal O_m$ from the previous subsection.

\begin{lemma}\label{lappert}
Regarding $\mathcal L_{f}^{-1}, \mathcal L_{1}^{-1}$  as bounded linear operators on $L^2_0(\mathcal O)$ we have for some $D'=D'(f_{min}, \|f\|_\infty, \mathcal O)$ that
$\|\mathcal L_{f}^{-1}- \mathcal L_{1}^{-1}\|_{L^2_0 \to L^2_0} \leq D' \|f-1\|_\infty.$
\end{lemma}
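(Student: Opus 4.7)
My plan is to use the standard resolvent identity for divergence form operators combined with energy estimates and duality. I would carry this out in the following four steps.

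Given $\phi \in L^2_0(\mathcal O)$, set $u_f = \mathcal L_f^{-1}\phi$ and $u_1 = \mathcal L_1^{-1}\phi$. By Proposition \ref{sobald} both lie in $\bar H^2_f \cap \bar H^2_1 \subset H^2 \cap H^1_\nu \cap L^2_0$, so they satisfy $\mathcal L_f u_f = \phi = \mathcal L_1 u_1$ in $L^2_0$ together with zero Neumann boundary conditions. The first step is to write the elementary resolvent identity
\begin{equation*}
\mathcal L_1(u_1 - u_f) = (\mathcal L_f - \mathcal L_1)u_f = \nabla \cdot \big((f-1)\nabla u_f\big),
\end{equation*}
so that $u_1 - u_f = \mathcal L_1^{-1} \nabla \cdot \big((f-1)\nabla u_f\big)$, where the right-hand side indeed lies in $L^2_0$ as it is the image of a mean-zero function under $\mathcal L_1^{-1}$ (note $\int_\mathcal O \nabla \cdot ((f-1)\nabla u_f) = 0$ by the divergence theorem and the Neumann condition on $u_f$).

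The second step is to dualize against an arbitrary $v \in L^2_0$ with $\|v\|_{L^2} \le 1$. Setting $w = \mathcal L_1^{-1} v \in H^1_\nu \cap L^2_0$ and using self-adjointness of $\mathcal L_1^{-1}$ on $L^2_0$ together with Green's identity (\ref{greenid})—whose boundary term $\int_{\partial \mathcal O} (f-1)(\partial u_f/\partial \nu)\, w$ vanishes since $u_f \in H^1_\nu$—we obtain
\begin{equation*}
\langle u_1 - u_f, v \rangle_{L^2} = \langle \nabla \cdot [(f-1)\nabla u_f], w \rangle_{L^2} = -\langle (f-1)\nabla u_f, \nabla w \rangle_{L^2}.
\end{equation*}
Hence by Cauchy--Schwarz, $|\langle u_1 - u_f, v \rangle_{L^2}| \le \|f-1\|_\infty \|\nabla u_f\|_{L^2} \|\nabla w\|_{L^2}$.

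The third step supplies the energy estimates. Testing $\mathcal L_f u_f = \phi$ against $u_f$ via (\ref{greenid}) gives $\|\sqrt f \nabla u_f\|_{L^2}^2 = -\langle \phi, u_f\rangle_{L^2} \le \|\phi\|_{L^2}\|u_f\|_{L^2}$, and combining $f \ge f_{min}$ with the Poincaré inequality $\|u_f\|_{L^2} \le \sqrt{p_\mathcal O}\|\nabla u_f\|_{L^2}$ on $L^2_0$ (used after (\ref{topevar})) yields
\begin{equation*}
\|\nabla u_f\|_{L^2} \le \frac{\sqrt{p_\mathcal O}}{f_{min}} \|\phi\|_{L^2}.
\end{equation*}
The same argument with $f=1$ gives $\|\nabla w\|_{L^2} \le \sqrt{p_\mathcal O}\|v\|_{L^2}$.

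Finally, combining the previous two displays and taking the supremum over $v, \phi$ in their respective unit balls of $L^2_0$ yields $\|u_1 - u_f\|_{L^2} \le (p_\mathcal O/f_{min})\|f-1\|_\infty \|\phi\|_{L^2}$ and hence the claimed bound with $D' = p_\mathcal O/f_{min}$. There is no serious obstacle here; the only subtle point is verifying the vanishing of the boundary term in the integration by parts, which rests squarely on the fact that the inverse $\mathcal L_f^{-1}$ lands in $H^1_\nu$ as recorded in Proposition \ref{sobald}. The argument in fact produces a constant depending only on $f_{min}$ and $\mathcal O$, which is stronger than the dependence asserted in the statement.
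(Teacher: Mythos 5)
Your proposal is correct and follows essentially the same route as the paper: both start from the resolvent-type identity $\mathcal L_f^{-1}\phi - \mathcal L_1^{-1}\phi = \mathcal L_1^{-1}[\nabla\cdot((1-f)\nabla u_f)]$, pass to a dual formulation using the self-adjointness of $\mathcal L_1^{-1}$ on $L^2_0$, integrate by parts to move the divergence onto the test function, and close with $H^1$-energy estimates for $u_f$ and for $\mathcal L_1^{-1}$ applied to the dual element. Your write-up is slightly more explicit (carrying the Poincar\'e constant through and observing that the boundary term dies because $u_f \in H^1_\nu$, which the paper leaves implicit), and your tracking shows the constant $D'$ needs only $f_{\min}$ and $\mathcal O$, not $\|f\|_\infty$ — a small sharpening of the stated dependence that is consistent with the paper's argument as well.
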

\begin{proof}
For $\phi \in L^2_0$ denote by $u_f= \mathcal L_f^{-1}(\phi)$ the solution to (\ref{aeneumann}). By Proposition \ref{sobald} we have $\bar H^1_1 \subset H^1$ and so since $\mathcal L_1^{-1}$ is self-adjoint and using the divergence theorem,
\begin{align*}
&\|\mathcal L^{-1}_{f}\phi- \mathcal L_{1}^{-1} \phi\|_{L^2} \lesssim \|\mathcal L_{1}^{-1}[\nabla \cdot (1-f) \nabla u_f]\|_{L^2}  = \sup_{\|\varphi\|_{L^2} \le 1, \int \varphi =0} \Big|\int_\mathcal O \nabla \cdot (1-f) \nabla u_f \mathcal L_1^{-1}[\varphi] \Big| \\
& \lesssim \sup_{\|\psi\|_{H^1} \le 1}  \left|\int_\mathcal O (f-1) \nabla \psi \cdot \nabla u_f \right|  \lesssim \|f-1\|_\infty \sup_{\|\psi\|_{H^1} \le 1}\|\psi\|_{H^1} \|\nabla u_f\|_{L^2} \lesssim \|\phi\|_{L^2} \|f-1\|_\infty
\end{align*}
where we use $\| u_f\|_{H^1} \lesssim \|\phi\|_{L^2}$ as follows from the results in Section \ref{specsec}.
\end{proof}

By the arguments after (\ref{c2ef}), (\ref{h1est}), the operator $-\mathcal L_{1}^{-1}$ has a simple eigenvalue $\lambda_{1,1}$ with eigenfunction $e_{1,1}$ satisfying (\ref{sunnyside}). We apply the preceding lemma and Proposition \ref{opperture} in the Hilbert space $L^2_0(\mathcal O)$, which implies the convergence of the eigenpair $(\lambda_{1,f}, e_{1,f})$ of $-\mathcal L_f^{-1}$ to $(\lambda_{1,1}, e_{1,1})$ as $\|f-1\|_\infty \to 0$, in $\R \times L^2(\mathcal O)$. Under the hypotheses on $f$, Theorem 2 on p.334 in \cite{E10} implies that the $\|e_{1,f}\|_{H^k(V)}$ norms in a strict interior subset $V \supset\mathcal O_0$ of $\mathcal O$ are all uniformly bounded for $k>2+d/2$.  The standard interpolation inequality for Sobolev norms (p.44 in \cite{LM72}) implies for some $0<c(k,\alpha)<1$, and $2+d/2<\alpha<k$ (if necessary considering fractional Sobolev norms)
\begin{equation} \label{intpolef}
\|e_{1,f} - e_{1,1}\|_{H^\alpha} \le \|e_{1,f} - e_{1,1}\|^{c(k, \alpha)}_{L^2} \|e_{1,f} - e_{1,1}\|^{1-c(k, \alpha)}_{H^k} \to 0
\end{equation}
as $\|f-1\|_\infty \to 0$, where all Sobolev norms are over $V$. Since $H^\alpha$ embeds continuous into $C^2$ this implies convergence to zero of $\|e_{j,f} - e_{j,1}\|_{C^2(V)}$. We can then verify (\ref{sunnyside}) just as after (\ref{c2ef}), for $\kappa$ small enough, completing the proof of Theorem \ref{cylinderth}.

\subsection{Proofs of auxiliary results}\label{aux}

\subsubsection{Proof of Proposition \ref{sobald}}

We require a few preparatory remarks that will be used: For any $\eta>0$ the Sobolev imbedding gives $\|f\|_\infty \le \|f\|_{C^1} \lesssim \|f\|_{H^{1+d/2+\eta}}\le U.$ The multiplier inequality 
\begin{equation}\label{multi}
\|f h\|_{H^r} \lesssim \|f\|_{B^r} \|h\|_{H^r} \le U \|h\|_{H^r},~~ r \le s,
\end{equation}
where $B^r = H^r$ for $r > d/2$ and $B^r=C^r$ for $r \le d/2$, is also standard, and where we use that $H^s$ imbeds continuously into $C^{s-d/2-\eta}\subset C^r$ for $r \le d/2$ in case B) of the proposition. We also recall the standard result from elliptic PDEs that $(\Delta, \partial/\partial \nu)$ is a continuous isomorphism between $H^{k}(\mathcal O) \cap L^2_0(\mathcal O)$ and $H^{k-2}(\mathcal O) \cap L^2_0 \times H^{k-3/2}(\partial \mathcal O) $ (e.g, Theorem II.5.4 in \cite{LM72} or Theorem 4.3.3 in \cite{T83}), specifically
\begin{equation}\label{isom}
\|u\|_{H^k} \simeq \|\Delta u\|_{H^{k-2}} + \|\partial u/\partial \nu\|_{H^{k-3/2}},~~u \in H^k,~k \ge 2,
\end{equation}
with constants depending only on $d, \mathcal O, k$. [Here the $H^\alpha$-spaces on the boundary $\partial \mathcal O$ are naturally defined as in \cite{LM72}, and we note that the result is also true when $d=1$ if we replace the boundary spaces simply by the values of $u'$ at the endpoints of the interval $\mathcal O$.]

Now any $\varphi \in \bar H^k_f$ is the limit in $\bar H^k_f$ and in $L^2$ of its partial sum $\varphi_J = \sum_{j \le J} e_{j,f} \langle \varphi, e_{j,f} \rangle_{L^2}$. Moreover the $\varphi_J$ lie in $H^1_\nu \cap \bar H^k_f$ since the $e_j$'s do. We then have from (\ref{greenid}) and for constants in $\simeq$ depending only on $f_{min}, U \ge \|f\|_\infty$, the two-sided inequality
\begin{equation}\label{h1eq}
\|\varphi_J\|^2_{H^1} = \|\nabla \varphi_J\|_{L^2}^2 \simeq  \|\sqrt f \nabla \varphi_J\|_{L^2}^2 = \langle \mathcal L_f \varphi_J, \varphi_J \rangle_{L^2} = \|\varphi_J\|_{\bar H^1_f}^2.
\end{equation}
Taking limits, these inequalities extend to all $\varphi \in \bar H^1_f$, in particular $\bar H^1_f \subset H^1$. The inclusion $H^1  \subset \bar H^1_f$ is also valid (p.474 in \cite{TI}, or see Exercise 38.1 in \cite{B11}) but will be left to the reader. This proves the required assertions when $k=1$.

For $k=2$, using (\ref{isom}), (\ref{h1eq}), $\phi_J \in H^1_\nu$, we have with constants depending on $U,f_{min}$,
\begin{align*}
\|\varphi_J\|_{H^2} &\lesssim \|\Delta \varphi_J \|_{L^2} = \|f^{-1}(\mathcal L_f \varphi_J-\nabla f \cdot \nabla \varphi_J)\|_{L^2}  \lesssim \|\mathcal L_f \varphi_J\|_{L^2} + \|f\|_{C^{1}}\|\varphi_J\|_{H^{1}}  \lesssim \|\varphi_J\|_{\bar H^2_f}
\end{align*} and again taking limits the result extends to all $\phi \in \bar H^2_f$, in particular $\bar H^2_f \subset H^2$. We see that any $\phi \in \bar H^k_f, k \ge 2,$ is the $H^2$-limit of elements in $H^2$ satisfying Neumann boundary conditions. From this and Theorem I.9.4 in \cite{LM72} we deduce that $\bar H^2_f \subset H^2 \cap H^1_\nu.$ Then for $h \in H^2 \cap H^1_\nu \cap L^2_0$ and $f \in C^1$ we have $\|\mathcal L_fh\|_{L^2} \leq C(U) \|h\|_{H^2}<\infty$ and by the spectral representations of $\mathcal L_f, h \in L^2_0,$ we deduce $\mathcal L_f h \in L^2_0$. The inclusion of the r.h.s.~in (\ref{pain}) into $\bar H^2_f$ is also clear since for such $\varphi$ we have from the divergence and Parseval's theorem $$\|\varphi\|^2_{\bar H^2} = \sum_{j \ge 1} \lambda^2_{j,f} \langle \varphi, e_{j,f} \rangle_{L^2}^2 = \sum_{j \ge 1} \langle \mathcal L_f \varphi, e_{j,f} \rangle_{L^2}^2 = \|\mathcal L_f \varphi\|_{L^2}^2<\infty,$$ so that combining what precedes, (\ref{pain}) is proved. The desired norm equivalence for $k=2$ then also follows from the last estimates.

The claims for integer $k >2$ follow by induction. We assume the result has been proved for $k-1$ and $k-2$. Then we have $\bar H^k_f \subset H^1_\nu \cap H^{k-2}$. We then see from (\ref{isom}) that on $\bar H^k_f$, the norms $\|\cdot\|_{H^k}$ are equivalent to the norms $\|\Delta(\cdot)\|_{H^{k-2}}$. In particular for $\varphi \in \bar H^k_f$,
\begin{align*}
\|\varphi\|_{H^k} &\lesssim \|\Delta \varphi\|_{H^{k-2}} = \|f^{-1}(\mathcal L_f \varphi-\nabla f \cdot \nabla \varphi)\|_{H^{k-2}}\\
& \lesssim \|\mathcal L_f \varphi\|_{H^{k-2}} + \|f\|_{B^{k-1}} \|\varphi\|_{H^{k-1}}  \lesssim \|\mathcal L_f \varphi\|_{\bar H^{k-2}_f} + \|\varphi\|_{\bar H^{k-1}_f} \lesssim \|\varphi\|_{\bar H^k_f}
\end{align*}
using also the induction hypothesis, the multiplier inequality, and the definition of $\bar H^k_f$. The preceding bound for $\|\Delta \varphi\|_{H^{k-2}}$ in particular implies $\varphi \in H^k$. In the other direction, by similar arguments, 
\begin{align*}
\|\varphi\|_{\bar H^k_f} &= \|\mathcal L_f \varphi\|_{\bar H^{k-2}_f}  \lesssim  \|\Delta \varphi\|_{\bar H^{k-2}_f} + \|f\|_{B^{k-1}} \| \varphi\|_{\bar H^{k-1}_f} \lesssim \|\Delta \varphi \|_{H^{k-2}} + \|\varphi\|_{H^{k-1}}  \lesssim \|\varphi\|_{H^k}.
\end{align*}

The last assertions follow for $k=1$ from $H^1 = \bar H^1_f=\bar H^1_{f'}$ and for $k=2$ from (\ref{pain}). The general case follows again by induction: indeed suppose the result holds for some $k$. Just as when showing (\ref{pain}), the space $\bar H^{k+2}_f$ consists precisely of all $\phi \in \bar H_f^k$ satisfying Neumann boundary conditions and such that $\mathcal L_f \phi \in \bar H^k_f$. This immediately implies $H^k_c/\R \cap L^2_0 \subset \bar H^k_1$ as elements of $H^k_c/\R \cap L^2_0$ are of the form $\bar \varphi = \varphi - \int \varphi$ for some $\varphi \in H^k_c$ so its normal derivatives of all orders vanish at $\partial \mathcal O$, and $\mathcal L_f \bar \varphi \in H^{k-2}_c \subset \bar H^{k-2}_f$ by the induction hypothesis. Finally, since $\bar H^k_f=\bar H^k_{f'}$ by the induction hypothesis, we have $\mathcal L_f \phi \in \bar H^k_{f'}$ and so $\phi \in \bar H^k_{f'}$. The equivalence of norms then follows from the first part of the proposition.

\subsubsection{Proof of Proposition \ref{translow}}

We will apply Theorem 3.1 in \cite{C03} with semi-group $e^{-t E_f}$ acting on $L^2(\mathcal O)$, where $E_f$ is the closure of $-\mathcal L_f$ from before (\ref{diric}) on the domain $H^1$. We note that any bounded convex domain satisfies the `chain condition' employed in that reference. Further, the doubling condition (D) there is  satisfied with scaling constant $\nu=d$. The upper bound heat kernel estimate for $p_t$ required in (3.1) in Theorem 3.1 in \cite{C03} is proved in Theorem 3.2.9 in \cite{D89} for the value $w=2$ (noting that a bounded domain with smooth boundary satisfies the `extension property' for Sobolev spaces required in \cite{D89}). Finally
\begin{equation} \label{LI}
\sup_{x,y \in \mathcal O}\frac{|\varphi(x)-\varphi(y)|}{|x-y|^{\alpha-d/2}} \lesssim \|\mathcal L_f^{\alpha/2} \varphi \|_{L^2} ~~\forall \varphi \in \bar H^\alpha_f,~\alpha>d/2
\end{equation}
where $\mathcal L_f^{\alpha/2}$ is the $\alpha/2$-fold application of $\mathcal L_f$. This verifies Condition (3.2) in \cite{C03} (for the choice of $\varphi=p_{t,f}$ relevant in the proof of Theorem 3.1 there). To prove (\ref{LI}), the Sobolev imbedding $H^\alpha \subset C^{\alpha-(d/2)}$ and Proposition \ref{sobald} imply that it suffices to bound $\|\varphi\|_{\bar H_f^{\alpha}}$, which for $\varphi \in \bar H^\alpha_f$ equals the graph norm $\|\mathcal L_f^{\alpha/2} \varphi \|_{2}$ by the argument given in the last paragraph of the proof of Proposition \ref{sobald}. This completes the proof.

 \bibliography{diffusion_bvm}{}
\bibliographystyle{imsart-number}

\end{document}